\documentclass[11pt]{amsart}
\usepackage{amssymb}

\numberwithin{equation}{section}
\setlength\textwidth{155mm}
\hoffset=-17mm
\setlength\textheight{230mm}
\voffset=-17mm

\usepackage{algorithm}
\usepackage{algorithmic}
\usepackage{amsfonts}
\usepackage{amscd}
\usepackage{color}
\usepackage{amsmath}
\usepackage{float}
\usepackage{amsthm}
\usepackage{amssymb}
\usepackage{mathrsfs}
\usepackage{amstext}
\usepackage{todonotes}
\usepackage{graphicx}
\usepackage{tikz}

\newcommand{\BIGOP}[1]{\mathop{\mathchoice%
		{\raise-0.22em\hbox{\huge $#1$}}%
		{\raise-0.05em\hbox{\Large $#1$}}{\hbox{\large $#1$}}{#1}}}

\newcommand{\BIGboxplus}{\mathop{\mathchoice%
		{\raise-0.35em\hbox{\huge $\boxplus$}}%
		{\raise-0.15em\hbox{\Large $\boxplus$}}{\hbox{\large $\boxplus$}}{\boxplus}}}
\numberwithin{equation}{section}

\theoremstyle{plain}
\newtheorem{satz}{Theorem}[section]
\newtheorem{defi}[satz]{Definition}

\newtheorem{lem}[satz]{Lemma}
\newtheorem{prop}[satz]{Proposition}
\newtheorem{rem}[satz]{Remark}

\newtheorem{exam}[satz]{Example}

\newcommand{\re}{\ensuremath{\mathbb{R}}}
\newcommand{\N}{\ensuremath{\mathbb{N}}}
\newcommand{\zz}{\ensuremath{\mathbb{Z}}}

\newcommand{\R}{\ensuremath{{\re}^d}}

\newcommand{\supp}{{\rm supp \, }}

\newcommand{\bproof}{\begin{proof}}
	\newcommand{\eproof}{\end{proof}}

\newlength{\fixboxwidth}
\setlength{\fixboxwidth}{\marginparwidth}
\addtolength{\fixboxwidth}{-0pt}

\newcommand{\be}{\begin{equation}}
\newcommand{\ee}{\end{equation}}
\newcommand{\beq}{\begin{eqnarray}}
\newcommand{\beqq}{\begin{eqnarray*}}
\newcommand{\eeq}{\end{eqnarray}}
\newcommand{\eeqq}{\end{eqnarray*}}

\title{}
\author{}

\begin{document}

	\title
	[A higher order  Faber spline basis for sampling discretization of functions]
	{A higher order Faber spline basis for sampling discretization of functions}

\author[Nadiia Derevianko]{Nadiia Derevianko$^{\lowercase{\rm a, b}}$}
\address{Fakult\"at f\"ur Mathematik\\ Technische Universit\"at  Chemnitz\\09107 Chemnitz, Germany, Institute of Mathematics of NAS of Ukraine, Tereshchenkivska st. 3, 01601 Kyiv-4, Ukraine}
\email{nadiia.derevianko@mathematik.tu-chemnitz.de}

\author[Tino Ullrich]{Tino Ullrich$^{\lowercase{\rm a}}$}
\address{Fakult\"at f\"ur Mathematik\\ Technische Universit\"at  Chemnitz\\09107 Chemnitz, Germany}
	\email{tino.ullrich@mathematik.tu-chemnitz.de}

\thanks{$^{\lowercase{\rm a}}$Fakult\"at f\"ur Mathematik, Technische Universit\"at  Chemnitz, 09107 Chemnitz, Germany}

\thanks{$^{\lowercase{\rm b}}$Institute of Mathematics of NAS of Ukraine, Tereshchenkivska st. 3, 01601 Kyiv-4, Ukraine}

\begin{abstract}
This paper is devoted to the question of constructing a higher order Faber spline basis for the sampling discretization of functions with higher regularity than Lipschitz. The basis constructed in this paper has similar properties as the piecewise linear classical Faber-Schauder basis \cite{Fa08} except for the compactness of the support. Although the new basis functions are supported on the real line they are very well localized (exponentially decaying) and the main parts are concentrated on a segment. This construction gives a complete  answer to Problem 3.13 in Triebel's monograph \cite{Tr2012} by extending the classical Faber basis to higher orders. Roughly, the crucial idea to obtain a higher order Faber spline basis is to apply Taylor's remainder formula to the dual Chui-Wang wavelets. As a first step we explicitly determine these dual wavelets which may be of independent interest. Using this new basis we provide sampling characterizations for Besov and Triebel-Lizorkin spaces and overcome the smoothness restriction coming from the classical piecewise linear Faber-Schauder system. This basis is unconditional and coefficient functionals are computed from discrete function values similar as for the Faber-Schauder situation.
\end{abstract}

\maketitle

\section{Introduction}

In this paper a higher order Faber spline basis for the sampling discretization of functions with higher regularity than Lipschitz is constructed. Similar as for the classical piecewise linear Faber-Schauder basis \cite{Fa08}, the coefficients in the basis expansion are computed from discrete point evaluations. The classical piecewise linear Faber-Schauder basis is nowadays a well-understood object and used in several mathematical disciplines, such as probability \cite{Cies1980}, (nonlinear) approximation and sampling recovery of multivariate functions \cite{BG2004,DD2011,Glen2018}, numerical integration and discrepancy \cite{Tr2010,Tr2012,HMOT2016}. However, from the limited regularity of the classical basis functions we may not expect approximation rates beyond $n^{-2}$ if $n$ denotes the number of degrees of freedom (e.g.\ function values). Therefore, it is a natural question to ask for a more regular variant of this basis. The basis constructed in this paper has similar properties as the classical Faber-Schauder basis except for the compactness of the support. Although the new basis functions are supported on the real line they are very well localized (exponential decay) and the main parts are concentrated on a segment, which makes them also  relevant for computational issues. This construction gives a complete  answer to Problem 3.13 in Triebel's monograph \cite{Tr2012}.

Having this new basis we consider the problem of characterizing smoothness spaces in terms of coefficients with respect to this particular basis.
This question is part of a more general problem --  the characterization of function spaces with respect to a spline (wavelet) basis system or a frame. This problem has been studied from the 1960/70ies starting with the work of Ciesielski \cite{Cies1960, Cie72}, Ropela \cite{Ro76} and Triebel \cite{Tr73,Tr78}. It has been further continued by Ciesielski and Kamont in \cite{Cies1980}, \cite{CK1995}, \cite{Kamont97} and also by Bourdaud \cite{Bou95}. In particular, the Haar system \cite{Ha10} recently attracted renewed interest, see for instance Seeger, T. Ullrich \cite{SeUl17, SeUl17_2}, Garrig{\'o}s, Seeger, T. Ullrich \cite{GaSeUl2016, GaSeUl2019} or V. Romanyuk \cite{Roman2015}, \cite{Romanyuk2016} and \cite{Roman2016}. The series of papers of D\~{u}ng  \cite{DD2009}, \cite{DD2011}, \cite{DD2013}, \cite{DD2016}, \cite{DD2018} and \cite{DD2019} introduces a multivariate ``frame-type'' spline system  (see Remark \ref{rem-ding} for more extended explanation) for this purpose. Let us also mention Schmeisser, Sickel \cite{SC2000} where a univariate Shannon sampling theory for smoothness spaces on the real line is developed. When it comes to the  multivariate (tensor product) Faber system \cite{Fa08} we refer to the recent monographs Triebel \cite{Tr2010,Tr2012}, Byrenheid \cite{Glen2018} and also to Bungartz, Griebel \cite{BG2004}.
Discretizations in terms of such non-smooth functions are also called ``non-smooth atomic decompositions'' of Besov and Sobolev spaces. We refer to the papers \cite{TW96}, \cite{Sarf2013} and \cite{SV2013} for more details in this direction.

In his 2010 monograph \cite{Tr2010} Triebel offered a new approach based on the classical Faber basis to get sampling characterizations of smoothness spaces. The result is an equivalent characterization for the norm of Besov spaces for the range of smoothness $1/p<r<\min\{1+1/p,2\}$. Note that in \cite{Tr2010} a similar question was also considered for Triebel-Lizorkin spaces with further additional restrictions on the smoothness parameter $r$ and in a more general framework for multivariate functions given on $\R$ in \cite{Glen2018}. Independently of \cite{Glen2018}, in 2011 the tensorized Faber-Schauder basis was investigated in detail for the sampling representation of Besov spaces in \cite[Section 4]{DD2011}.  The corresponding characterization uses only the values of the function at dyadic points. The lower restriction $r>1/p$ is natural and due to the availability of the point evaluation, but the upper restriction $r<\min\{1+1/p,2\}$ comes from the smoothness of Faber hat-functions. Therefore, it is a natural question to overcome this restriction and to obtain the corresponding characterization in general for all $r>1/p$.

 This question was formulated as a problem in Triebel's monograph \cite[\S 3.5]{Tr2010}.  In his books \cite[\S 3.5.2]{Tr2010} and \cite[\S 3.4]{Tr2012} he also offers some ideas how to extend results for Faber  hat-functions to higher order Faber splines. The idea was to integrate so-called higher order Battle-Lemarie spline wavelets (see \cite[Remarks 2.44 and 2.45]{Tr2010} for detailed information). Since these wavelets are not compactly supported (although exponentially decaying) and constitute an orthogonal basis in $L_2(\re)$ the coefficients in a series expansion with respect to this ``integrated'' system (see \cite{Tr2010}) can of course be represented as a linear combination of function values at dyadic points. However, the  number of values depends on the scaling level $j$. Another issue is the starting term of the expansion since the scaling function of a wavelet system can not be integrated properly (there are no vanishing moments). This is formulated as Problem 3.13 in \cite{Tr2012}.

In this paper we present the solution of this problem in the univariate setting.  We construct a higher order  B-spline basis that allows to get sampling discretizations of Besov-Triebel-Lizorkin function spaces with higher smoothness $r$. We follow the idea of constructing the Faber-Schauder basis by integrating (lifting) the Haar functions. As a replacement for Haar we use the biorthogonal wavelet system constructed by Chui and Wang \cite{Chui1992, Chuibook}, see also Lorentz, Oswald \cite{LoOsw98}. Our basis mother function is then given by the lifted dual wavelet (rather then the lifted compactly supported wavelet itself as done in \cite{Wang96, Wang95}, see also Remark \ref{rem-wang} below). It is essential that the (primal) Chui-Wang wavelet \eqref{CW01} represents a compactly supported Riesz basis in the wavelet space $W_j$ which is orthogonal with respect to different scales (see Theorem \ref{wavelet}). As a first step we therefore explicitly determine the dual Chui-Wang wavelet using tools from complex analysis.
To be more precise, we explicitly determine the coefficients $a_n^{(m)}$ in the following representation (see Theorem 2.2 for linear wavelets and Section 6 for higher order wavelets)
$$
\psi_m^*(x)=\sum\limits_{n\in \zz} a^{(m)}_n \psi_m(x-n).
$$
In \cite{Chui1992} it was shown that coefficients of this representation decay  exponentially, but as far as we know the exact formula for $a^{(m)}_n$ was not known before. Note, that there is also another construction of biorthogonal spline wavelets by Cohen, Daubechies, Feauveau \cite{CoFaDa92} based on two different multiresolution analyses. This construction has the advantage that both, primal and dual wavelet, are compactly supported. However, the semi-orthogonality property, which is crucial for our approach, is not present.

Applying Taylor's remainder formula to the dual Chui-Wang wavelets $\psi_{m}^{\ast}(\cdot)$ leads to the new basis mother functions $s_{2m}(\cdot)$. These ideas are described in Lemma \ref{recov2}. As the starting term we use the fundamental spline interpolant  $L^{2m}(\cdot)$, which, due to its fundamental interpolant property at integer points, represents the correct starting term of the expansion. Since primal Chui-Wang wavelets are compactly supported we get that the coefficients in a series expansion with respect to the higher order Faber spline basis $s_{2m;j,k}(\cdot)$ constructed in this paper can be represented as finite linear combination of samples (the number of points depends only on the order $m$ of basis functions). In case $m=2$ we obtain (see the expansion (\ref{exp-m}) and the formula for the coefficients (\ref{coef-m}) for arbitrary $m\geq 3$, $m\in \N$)
$$
	f=\sum \limits_{k \in \zz} f(k)L^4(x-k)+\sum \limits_{j =0}^{\infty} \sum \limits_{k \in \zz} \lambda_{j,k}(f) s_{j,k}(x),
$$
	where the coefficients $\lambda_{j,k}(f)$ are given as a linear combination of 4-th order differences, i.e.,
$$
	\lambda_{j,k}(f)=\dfrac{1}{6}\left(\Delta^4_{2^{-j-1}}f\left( \dfrac{2k}{2^{j+1}}\right)-4\Delta^4_{2^{-j-1}}f\left( \dfrac{2k+1}{2^{j+1}}\right)+\Delta^4_{2^{-j-1}}f\left( \dfrac{2k+2}{2^{j+1}}\right) \right)\,.
$$
Note the similarity to $\lambda_{j,k}(f)  = -\frac{1}{2} \Delta^2_{2^{-j-1}}f(2^{-j}k)$ for the classical Faber-Schauder system.

 The main discretization result of this paper is formulated in Theorem \ref{charact3}. With the use of this basis we obtain sampling discretizations of functions from Besov spaces $B^r_{p,\theta}(\mathbb{R})$ for the smoothness parameter $r$ that satisfies $1/p<r<\min\{2m-1+1/p,2m\}$ and $\max\{1/p,1/\theta\}<r<2m-1$ for Triebel-Lizorkin spaces $F^r_{p,\theta}(\mathbb{R})$.
Note that for the simplicity we consider the case of piecewise cubic splines  and in Section 6 we give the main ideas for the extension of the obtained results for Faber splines of higher order.

The Chui-Wang biorthogonal wavelet basis is of independent interest for the discretization of function spaces. It has for instance application for new characterizations in terms of ``Haar frames'', see \cite{GaSeUl19}. Therefore, we also give equivalent representations for the (quasi-)norm of Besov-Triebel-Lizorkin spaces in terms of Chui-Wang wavelet coefficients. Note also that for the periodic case very well time-localized basis functions were constructed in \cite{PS2001} and \cite{DMP2019} for one- and two-dimensional cases. The corresponding characterization of Besov spaces (for the univariate case so far) was obtained in \cite{DMP2017}.

\textbf{Outline.} This paper has the following structure. In Section 2 we give the definition of Chui-Wang wavelets and prove a formula for the explicit representation of dual linear Chui-Wang wavelets. In Section 3 we describe the construction of a piecewise cubic B-spline basis and prove uniform convergence of the expansion (i.e. in $\|\cdot\|_{\infty}$) for compactly supported continuous functions. Section 4 is dedicated to sampling characterization of Besov-Triebel-Lizorkin spaces. In Section 5 we give description of these spaces via Chui-Wang biorthogonal basis. In Section 6 we show how to extend results of Sections 2-5 to Chui-Wang wavelets and  B-splines of higher order. Finally, definitions of functions spaces and some auxiliary results we put to the Appendix.

\textbf{Notation.} As usual $\N$ denotes the natural numbers, $\N_0:=\N\cup \{0\}$, $\N_{-1}:=\N\cup \{0,-1\}$, $\zz$ and $\re$ denote the integer and real numbers respectively and let $\zz_+:=\{k\in \zz: k\geq0\}$ and $\re_+:=\{x\in \re: x\geq0\}$. For $a\in\re$ we denote by $a_+$ the number $a_+:=\max\{a,0\}$. For two nonnegative quantities $a$ and $b$ we write $a \lesssim b$ if there exists a positive constant $c$ that does not depend on one of the parameters known from the context such that $a\leq c \, b$. We write $a\asymp b$ if $a \lesssim b$ and $b \lesssim a$. We define also difference of $m$th order with a step $h \in \re$ by the formula $\Delta_h^{m} f(\cdot)=\sum\limits_{l=0}^{m} (-1)^{m-l}\binom{m}{l} f(\cdot+hl)$.
 Let further $C(\re)$ be the space of continuous functions on $\re$ with the usual supremum norm, $C_0(\re)$ be the space of compactly supported continuous functions on $\re$ and $C^r(\re)$ be the space of functions on $\re$ with continuous $r$-th derivative. By $L_p(\re)$, $0<p\leq \infty$ as usual we denote the space of Lebesgue measurable functions with the finite norm
$$
\|f\|_p:=
\begin{cases}
\Big(\int\limits_{\re} |f(x)|^p dx \Big)^{1/p}, & 0<p<\infty, \\
\mathrm{ess}\sup\limits_{x \in \re} |f(x)|, & p=\infty.
\end{cases}
$$
Let $S(\re)$ be the Schwartz space of infinitely times differentiable fast decreasing functions. By $S'(\re)$ we denote the topological dual of $S(\re)$ that is the space of tempered distributions.

\section{Non-compactly supported dual wavelets}
In this section we give definition of wavelets and B-splines (see  \cite{Chuibook}, \cite{Chui1992}) and prove an explicit representation for dual wavelets of order 2. General result for $m$-th order dual wavelets is formulated in Section 6.
\subsection{Construction of the Chui-Wang wavelets}

 Let $N_m$, $m \in \N$, be the $m$-th order B-spline with knots at $\zz$ defined by
$$
N_m(x)=(N_{m-1}*N_1)(x)=\int \limits_0^1 N_{m-1}(x-t) dt,
$$
where $N_1=\mathcal{X}_{[0,1)}$. It is clear that $\supp N_m=[0,m]$.
By $N_{m;j,k}$ we denote $N_{m;j,k}:=N_m(2^j \cdot-k)$ and let
$$
V_j=\text{span}\{N_{m;j,k}: k \in \zz \}.
$$
It is well known that spaces $V_j$ constitute a multiresolution analysis of $L_2(\re)$ (see, for example, \cite{Chui1992}), i.e. the following properties hold
\begin{itemize}
	\item [(i)] $V_j\subset V_{j+1}$ for $j \in \zz$;
	\item [(ii)] $\text{clos}_{L_2}\Big(\bigcup_{j \in \zz}V_j\Big)=L_2$;
	\item [(iii)] $\bigcap_{j \in \zz}V_j=\{0\}$;
	\item [(iv)] for each $j$ the system $\{N_{m,j,k}: \, k \in \zz \}$ is a Riesz basis in $V_j$.
\end{itemize}
Then as usually the wavelet space $W_j$ is defined as orthogonal complement of the space $V_j$ to the space $V_{j+1}$, i.e.
$$
W_j=V_{j+1}\ominus V_j, \, j \in \zz.
$$
Wavelet spaces $W_j$ are generated by some basic wavelet. Detailed information about it may be found in \cite{Chui1992} and here we only state the following result.
 \begin{satz}\label{wavelet}\cite{Chui1992}
	The $m$th order spline
	\begin{equation}\label{CW01}
	\psi_m(x)=\dfrac{1}{2^{m-1}}\sum\limits_{l=0}^{2m-2}(-1)^lN_{2m}(l+1)N_{2m}^{(m)}(2x-l),
	\end{equation}
	with support $[0,2m-1]$, is a basic wavelet that generates $W_0$, and consequently, all the wavelet spaces $W_j$, $j\in \zz$, that is $W_j=\mathrm{span}\{\psi_m(2^{j}\cdot -k), \, k \in \zz\}$.
\end{satz}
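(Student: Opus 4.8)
The plan is to establish the three defining properties of a generating wavelet within the Fourier/multiresolution framework: that $\psi_m\in V_1$, that $\psi_m\perp V_0$, and that the integer shifts of $\psi_m$ form a Riesz basis whose closed span is \emph{all} of $W_0$; dilating then yields every $W_j$. Throughout I abbreviate $\phi:=N_m$ and use that its Fourier transform satisfies the refinement relation $\widehat{\phi}(\xi)=P(e^{-i\xi/2})\widehat{\phi}(\xi/2)$ with mask $P(z)=\big((1+z)/2\big)^m$, together with the autocorrelation (bracket) symbol $\Phi(\xi):=\sum_{k\in\zz}|\widehat{\phi}(\xi+2\pi k)|^2=E(e^{-i\xi})$, where $E$ is the Euler--Frobenius Laurent polynomial whose coefficients are precisely the values $N_{2m}(\cdot)$ of the B-spline of order $2m$ at the integers. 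The essential analytic input is that $E(z)\neq 0$ for $|z|=1$, equivalently $0<c_1\le \Phi(\xi)\le c_2<\infty$; this is nothing but the Riesz basis property (iv) of $\{N_{m;0,k}\}$, which I may assume.

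First I would show $\psi_m\in V_1$. Using the distributional identity $N_{2m}^{(m)}(x)=\sum_{k=0}^{m}(-1)^k\binom{m}{k}N_m(x-k)$ (which follows from $N_{2m}=N_m*N_m$ and $N_1'=\delta_0-\delta_1$), the representation \eqref{CW01} collapses, after collecting the shift index $n=l+k$, into $\psi_m(x)=\sum_{n}q_n\,N_m(2x-n)$ with $q_n=\frac{(-1)^n}{2^{m-1}}\sum_k\binom{m}{k}N_{2m}(n-k+1)$. Equivalently, in the Fourier domain $\widehat{\psi_m}(\xi)=Q(e^{-i\xi/2})\widehat{\phi}(\xi/2)$, and a short computation identifies the wavelet mask, up to normalisation, as $Q(z)=-z^{2m-1}\,\overline{P(-z)}\,E(-z)$; this makes the appearance of the autocorrelation values $N_{2m}(l+1)$ transparent. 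Since $Q$ is a Laurent polynomial, $\psi_m$ is a finite combination of $N_m(2\cdot-n)$, hence $\psi_m\in V_1$, and one recovers the stated support $[0,2m-1]$.

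Next I would verify $\psi_m\perp V_0$ via the shift-invariant-space criterion: $g\perp\{\phi(\cdot-k):k\in\zz\}$ iff $\sum_{k}\widehat{g}(\xi+2\pi k)\overline{\widehat{\phi}(\xi+2\pi k)}=0$ a.e. Splitting the sum over even and odd $k$ and writing $z=e^{-i\xi/2}$ turns the left-hand side into
$$
\sum_{k}\widehat{\psi_m}(\xi+2\pi k)\overline{\widehat{\phi}(\xi+2\pi k)}=Q(z)\overline{P(z)}\,E(z)+Q(-z)\overline{P(-z)}\,E(-z),
$$
and the antisymmetric structure $Q(z)=-z^{2m-1}\overline{P(-z)}E(-z)$ (the odd power $z^{2m-1}$ flipping sign under $z\mapsto-z$) makes the two summands cancel identically. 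Thus the closed span of $\{\psi_m(\cdot-k)\}$ is contained in $W_0=V_1\ominus V_0$.

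The main obstacle is the converse containment together with Riesz stability, i.e.\ that these shifts actually exhaust $W_0$ and form a basis. I would reduce this to the invertibility on the torus of the modulation matrix $\big(\begin{smallmatrix}P(z)&P(-z)\\ Q(z)&Q(-z)\end{smallmatrix}\big)$. Its determinant computes to $P(z)Q(-z)-P(-z)Q(z)=z^{2m-1}\big(|P(z)|^2E(z)+|P(-z)|^2E(-z)\big)=z^{2m-1}\Phi(\xi)$, where the last equality is the two-scale relation for the bracket $\Phi$. Since $\Phi$ is bounded above and below by property (iv), the matrix is boundedly invertible for all $|z|=1$; this simultaneously yields the decomposition $V_1=V_0\oplus\overline{\mathrm{span}}\{\psi_m(\cdot-k)\}$ (so the span equals $W_0$) and the two-sided Riesz bounds for $\{\psi_m(\cdot-k)\}$. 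Finally, applying the dyadic dilation $x\mapsto 2^j x$ transports the statement from $W_0$ to every $W_j$, completing the argument.
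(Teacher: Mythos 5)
Your argument is correct, but there is nothing in the paper to compare it against: Theorem \ref{wavelet} is imported verbatim from Chui and Wang \cite{Chui1992}, and the text explicitly defers all details to that source (``here we only state the following result''). The meaningful comparison is therefore with the original Chui--Wang proof, and your route is in substance the same as theirs. Indeed, taking Fourier transforms of \eqref{CW01} and using $\widehat{N_{2m}^{(m)}}(\eta)=(1-e^{-i\eta})^{m}\widehat{N_m}(\eta)$ gives $\widehat{\psi_m}(\xi)=Q(e^{-i\xi/2})\widehat{N_m}(\xi/2)$ with $Q(z)=2^{-m}(1-z)^{m}E_{2m}(-z)$, where $E_{2m}(z)=\sum_{l=0}^{2m-2}N_{2m}(l+1)z^{l}$ is the Euler--Frobenius polynomial; since $E_{2m}(z)=z^{m-1}E(z)$ and $\overline{P(-z)}=(-1)^{m}2^{-m}z^{-m}(1-z)^{m}$ on $|z|=1$, this is exactly your $Q(z)=-z^{2m-1}\overline{P(-z)}E(-z)$ --- so your mask identification is in fact exact, not merely ``up to normalisation,'' and likewise your coefficient formula $q_n=\tfrac{(-1)^n}{2^{m-1}}\sum_k\binom{m}{k}N_{2m}(n-k+1)$ is the classical one. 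Your three steps (membership in $V_1$ with support $[0,2m-1]$, orthogonality to $V_0$ via the bracket criterion and the odd power $z^{2m-1}$, and exhaustion of $W_0$ plus Riesz stability via the modulation-matrix determinant $P(z)Q(-z)-P(-z)Q(z)=z^{2m-1}E(z^{2})$, nonvanishing on the torus because the Euler--Frobenius polynomial has no unimodular zeros, equivalently MRA property (iv), which you may legitimately assume) are precisely the ingredients of the classical semi-orthogonal construction. The only point to make explicit in a fully written-out version is the standard shift-invariant-space lemma converting bounded invertibility of the modulation matrix on $|z|=1$ into the decomposition $V_1=V_0\oplus\overline{\mathrm{span}}\{\psi_m(\cdot-k)\}$ together with two-sided Riesz bounds; with that lemma cited, the argument is complete, and dyadic dilation transfers it to every $W_j$.
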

Note that $\psi_1$ is the Haar function.
\begin{figure}[h!]
	\begin{minipage}[h]{0.31\linewidth}
		\center{\includegraphics[width=1\linewidth]{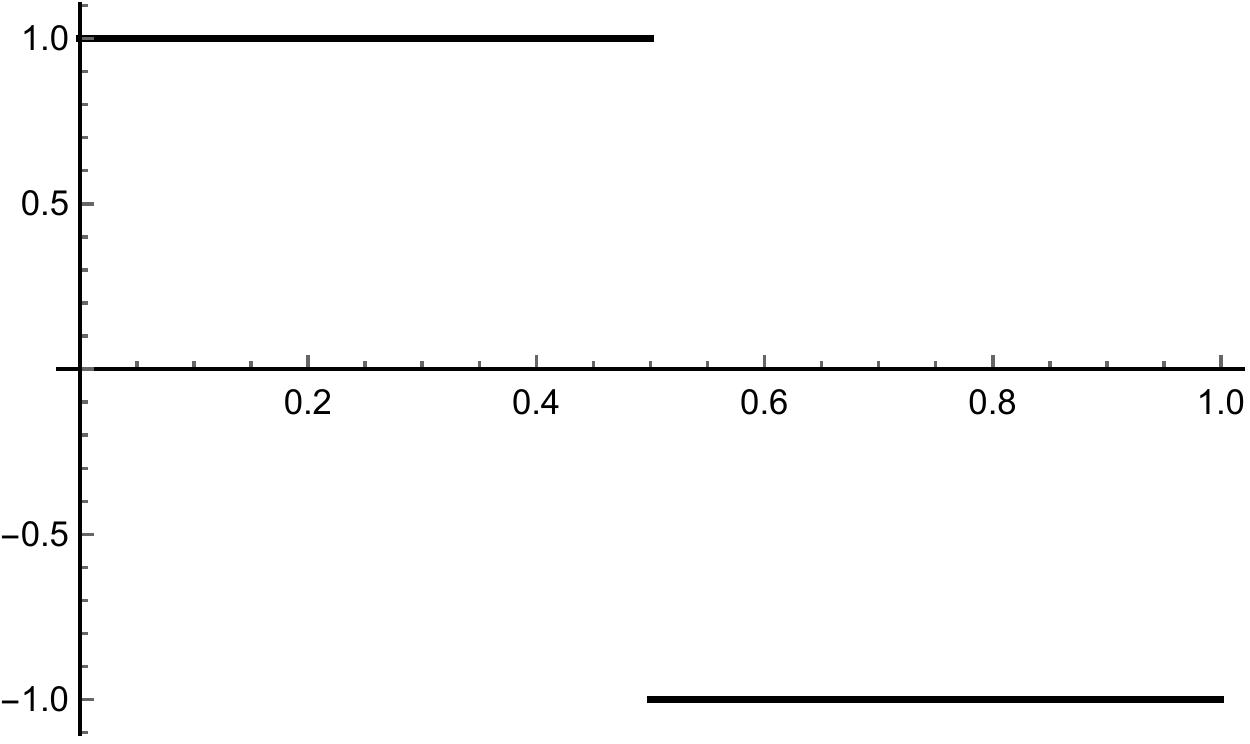}}
	\end{minipage}
	\hfill
	\begin{minipage}[h]{0.31\linewidth}
		\center{\includegraphics[width=1\linewidth]{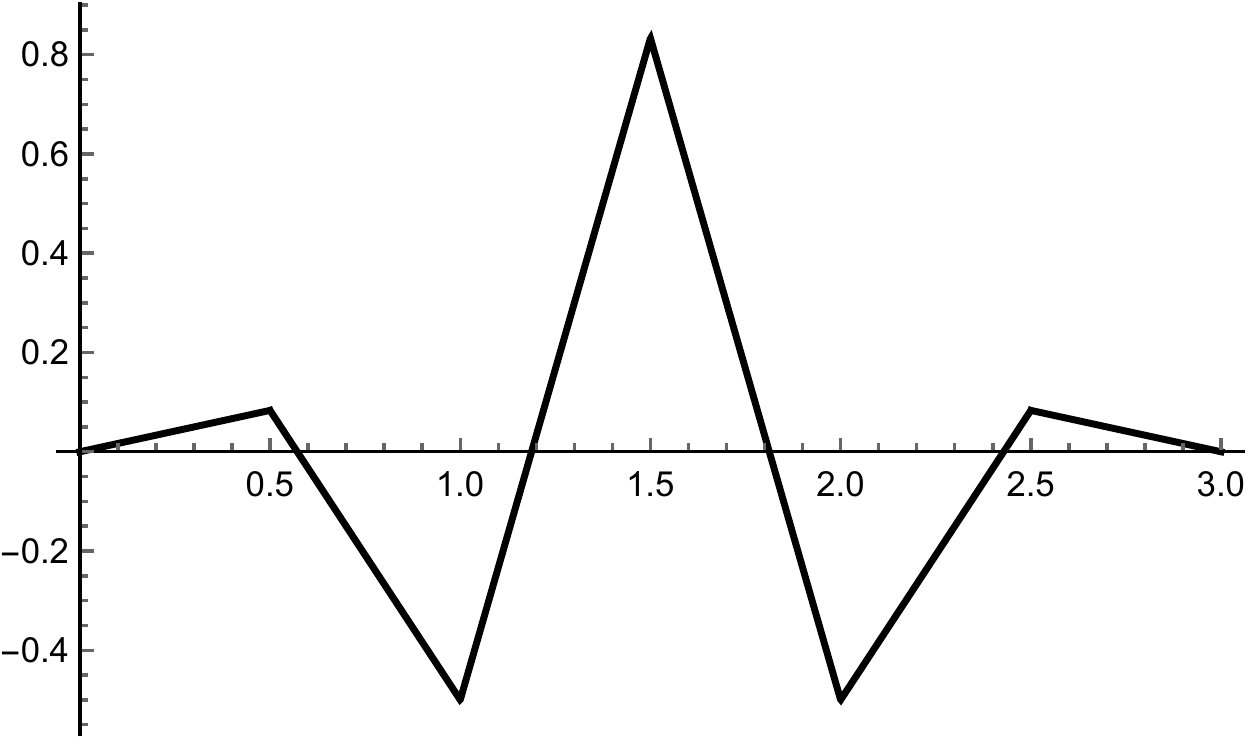}}
	\end{minipage}
	\hfill
	\begin{minipage}[h]{0.31\linewidth}
		\center{\includegraphics[width=1\linewidth]{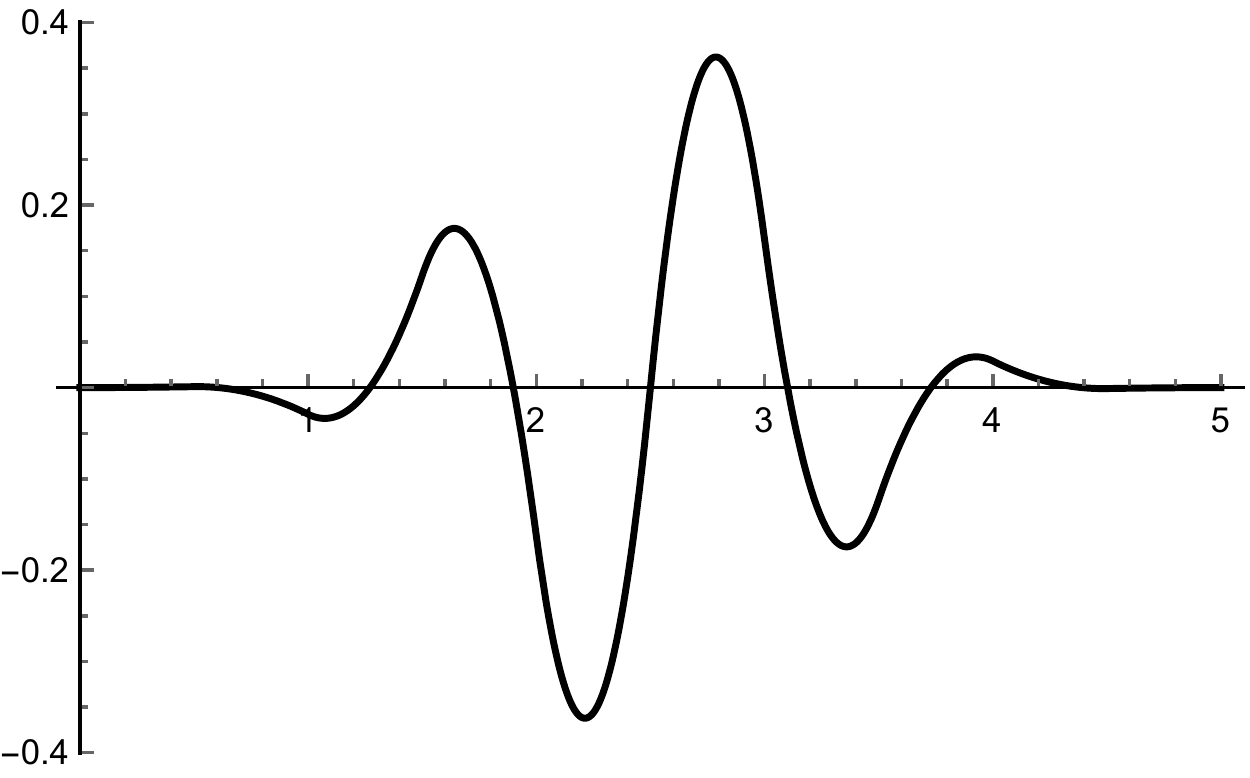}}
	\end{minipage}
  \caption{ Wavelets $\psi_1$ (left), $\psi_2$ (middle) and $\psi_3$ (right).}
	\label{wavelets}
\end{figure}

By $\psi_{m;j,k}$ we denote $\psi_{m;j,k}:=\psi_m(2^j \cdot-k)$. Due to semiorthogonality of the wavelets $\psi_{m;j,k}$  the space $L_2(\re)$ can be decomposed
$$
L_2(\re)=V_0 \oplus W_0 \oplus W_1 \oplus ... \, ,
$$
where $V_0$ is generated by $N_m(\cdot)$ and each $W_j$ by  $\psi_m(2^j \cdot)$. By $\psi_m^*$ we denote the dual wavelet of $\psi_m$, and by $N_m^*$ dual of $N_m$. Below in Theorem 2.2 we present a formula for computation of $\psi_m^*$ for $m=2$ (see Section 6 for arbitrary $m$). Then we have that  each $f \in L_2(\re)$ can be represented (see \cite{Chui1992} for details)
\begin{equation}\label{dual-repre}
f=\sum\limits_{k\in \zz} \langle f, N_{m;0,k}\rangle N_{m;0,k}^* +\sum\limits_{j\in \N_0} \sum\limits_{k\in \zz} \langle f, 2^j \psi_{m;j,k}\rangle \psi^*_{m;j,k}.
\end{equation}

\subsection{Representation of the dual wavelet}
The following theorem holds
\begin{satz} \label{dual-wav-repr}
	The dual wavelet $\psi_2^*$ can be represented as
	$$
	\psi_2^*(x)=\sum\limits_{n\in \zz} a_n \psi_2(x-n),
	$$
	where coefficients $a_n$ are defined as follows
	\begin{equation}\label{dualwavcoef}
	a_n=\begin{cases}
	(-6-4\sqrt{3})(-2-\sqrt{3})^{n-1}+(6+7\sqrt{3}/2)(7+4\sqrt{3})^{n-1} & \text{if} \ \ n\leq 1, \\
	(6-4\sqrt{3})(-2+\sqrt{3})^{n-1}+(-6+7\sqrt{3}/2)(7-4\sqrt{3})^{n-1} & \text{if} \ \ n>1.
	\end{cases}
	\end{equation}
\end{satz}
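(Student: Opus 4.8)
The plan is to exploit biorthogonality together with the compact support of $\psi_2$ to reduce the problem to inverting a finite Toeplitz convolution, and then to read off the coefficients $a_n$ by a residue computation. Since the dual wavelet $\psi_2^*$ lies in the same wavelet space $W_0=\Span\{\psi_2(\cdot-n):n\in\zz\}$ as $\psi_2$ itself (semiorthogonality, Theorem \ref{wavelet}), the ansatz $\psi_2^*(x)=\sum_{n\in\zz}a_n\psi_2(x-n)$ is legitimate, and the $a_n$ are uniquely determined by the duality relations $\langle\psi_2^*,\psi_2(\cdot-l)\rangle=\delta_{0,l}$, $l\in\zz$. Writing $g_k:=\langle\psi_2,\psi_2(\cdot-k)\rangle$ for the autocorrelation sequence of the integer translates, these relations become the discrete convolution equation $\sum_{n\in\zz}a_n\,g_{l-n}=\delta_{0,l}$; that is, $(a_n)$ is the convolution inverse of $(g_k)$. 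Because $\supp\psi_2=[0,3]$ has length $3$, the translate $\psi_2(\cdot-k)$ overlaps $\psi_2$ on a set of positive measure only for $|k|\le 2$, so $g_k=0$ for $|k|\ge 3$ and the sequence $(g_k)$ is finitely supported and symmetric ($g_{-k}=g_k$).

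First I would compute the five nonzero entries $g_0,g_{\pm1},g_{\pm2}$ explicitly. Inserting the representation \eqref{CW01} for $m=2$, with the cubic B-spline values $N_4(1)=N_4(3)=\tfrac16$, $N_4(2)=\tfrac23$, reduces each $g_k$ to a finite combination of the elementary overlap integrals $\int_{\re}N_4''(2x-i)\,N_4''(2x-2k-j)\,dx$ (these can also be read off from the known Gram matrix of second derivatives of cubic B-splines). Passing to symbols, put $G(z)=\sum_k g_k z^k$ and $A(z)=\sum_n a_n z^n$ on the unit circle $z=e^{i\omega}$; the convolution equation is then equivalent to $A(z)G(z)=1$, i.e.\ $A=1/G$. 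Since $\{\psi_2(\cdot-n)\}$ is a Riesz basis of $W_0$, the symbol satisfies $0<c\le G(e^{i\omega})\le C$, so $1/G$ is a smooth $2\pi$-periodic function whose Fourier coefficients $a_n$ decay exponentially.

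The explicit values come from a residue/partial-fraction analysis, which is the ``tools from complex analysis'' referred to in the introduction. The symmetric Laurent polynomial $G(z)=g_0+g_1(z+z^{-1})+g_2(z^2+z^{-2})$ has $P(z):=z^2G(z)$ a palindromic polynomial of degree $4$, whose roots occur in reciprocal pairs. Solving the associated quadratics gives the four roots $-2\pm\sqrt3$ and $7\pm4\sqrt3$ (note $(-2-\sqrt3)(-2+\sqrt3)=1$ and $(7+4\sqrt3)(7-4\sqrt3)=1$); exactly two of them, $-2+\sqrt3$ and $7-4\sqrt3$, lie inside the unit disc. Writing $A(z)=z^2/P(z)=\sum_{\rho}c_\rho/(z-\rho)$ in partial fractions and Laurent-expanding each term on the annulus containing $|z|=1$ — the poles outside the disc contributing the part for $n$ small, the poles inside contributing the part for $n$ large — yields two two-term geometric expressions in the bases above, the shift in exponent being absorbed into the residue constants. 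Collecting the residues produces precisely the constants in \eqref{dualwavcoef}; the two cases there correspond to poles outside versus inside the unit circle, and by construction the two formulas coincide at the transition, which is why the split may be taken on either side of $n=1$.

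The main obstacle I expect to be twofold and essentially computational: first, evaluating the overlap integrals defining $g_0,g_1,g_2$ exactly, so that the subsequent arithmetic stays clean; and second, carrying out the residue bookkeeping so that the partial-fraction constants collapse to the stated surds $-6-4\sqrt3$, $6+7\sqrt3/2$, and their counterparts. The conceptual steps — that the dual is the convolution inverse of the finitely supported autocorrelation, and that this inverse is governed by the roots of the palindromic symbol — are robust; the delicate point is the exact factorisation of $P$ into the reciprocal pairs $\{-2\pm\sqrt3\}$ and $\{7\pm4\sqrt3\}$, which is where I would double-check. Finally I would verify directly that the resulting $\psi_2^*$ satisfies $\langle\psi_2^*,\psi_2(\cdot-l)\rangle=\delta_{0,l}$ and the symmetry $a_n=a_{-n}$, which by uniqueness of the dual basis in $W_0$ completes the proof.
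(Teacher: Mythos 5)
Your proposal is correct and follows essentially the same route as the paper's proof: both reduce the biorthogonality relations to the discrete convolution equation $\sum_{n\in\zz} a_n c_{l-n}=\delta_{0,l}$ with the finitely supported symmetric autocorrelation sequence ($c_0=\tfrac14$, $c_{\pm1}=\tfrac{5}{108}$, $c_{\pm2}=-\tfrac{1}{216}$, $c_l=0$ for $|l|\ge 3$), invert the resulting symbol on the unit circle, and extract $a_n$ via partial fractions and Cauchy's integral formula from the reciprocal root pairs $-2\pm\sqrt{3}$ and $7\pm4\sqrt{3}$, with the poles inside the circle producing the $n>1$ branch and those outside the $n\le 1$ branch. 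Your additional remarks (the Riesz-basis lower bound guaranteeing the symbol is nonvanishing, and the symmetry check $a_n=a_{-n}$) are correct refinements that the paper leaves implicit, so the plan matches the paper's argument in substance.
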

\begin{proof}
According to the definition of the dual wavelet we have $\langle \psi_2^*(\cdot-n), \psi_2(\cdot-l) \rangle=\delta_{n,l}$ what is the same as $\langle \psi_2^*(\cdot), \psi_2(\cdot-l) \rangle=\delta_{0,l}$. Then
$$
\delta_{0,l}=\langle \psi_2^*(\cdot), \psi_2(\cdot-l) \rangle=\sum \limits_{n \in \zz}a_n \langle \psi_2(\cdot-n), \psi_2(\cdot-l) \rangle=\sum \limits_{n \in \zz}a_n \langle \psi_2(\cdot+l-n), \psi_2(\cdot) \rangle=\sum \limits_{n \in \zz}a_n c_{l-n},
$$
where $c_{l-n}=\langle \psi_2(\cdot+l-n), \psi_2(\cdot) \rangle$.

Let us further find $a_n$ from the condition $\sum_{n \in \zz}a_n c_{l-n} = \delta_{0,l}$. Computing the coefficients $c_l$ for $l\in \zz$ we get $c_0=\frac{1}{4}$, $c_{\pm1} =\frac{5}{108}$, $c_{\pm 2}=-\frac{1}{216}$ and $c_l=0$ for $|l|\geq3$.
Then we consider  the product of the following two polynomials
$$
t_a=\sum \limits_{n \in \zz} a_n \mathrm{e}^{inx} \ \text{ and } \ t_c=\sum \limits_{l \in \zz} c_l \mathrm{e}^{ilx}.
$$
We have
$$
(t_a\cdot t_c)(x)=\sum \limits_{n \in \zz} \sum \limits_{l \in \zz} a_n c_l \mathrm{e}^{i(n+l)x}=\sum \limits_{l \in \zz} \left( \sum \limits_{n \in \zz} a_n c_{l-n} \right) \mathrm{e}^{ilx}.
$$
Since the sum in the brackets $\sum \limits_{n \in \zz} a_n c_{l-n}=\delta_{l,0}$ we get
$$
(t_a\cdot t_c)(x)=\sum \limits_{l \in \zz} \delta_{l,0} \mathrm{e}^{ilx}=1.
$$
Then $t_a(x)=\frac{1}{t_c(x)}$ or
$$
\sum \limits_{n \in \zz} a_n \mathrm{e}^{inx} = \frac{1}{t_c(x)}.
$$
Multiplying both parts of the last equality by $\mathrm{e}^{-inx}$ and integrating over $[0,2\pi]$, we obtain
\begin{align*}
a_n=&\frac{1}{2\pi} \int \limits_{0}^{2\pi} \frac{\mathrm{e}^{-inx}}{-\frac{1}{216}\mathrm{e}^{-2ix}+\frac{5}{108}\mathrm{e}^{-ix}+\frac{1}{4}+\frac{5}{108}\mathrm{e}^{ix}-\frac{1}{216}\mathrm{e}^{2ix}} \, dx  \\
=&\frac{216}{2\pi i} \int \limits_{0}^{2\pi} \frac{i \mathrm{e}^{ix} \mathrm{e}^{-i(n-1)x}}{-1+10\mathrm{e}^{ix}+54\mathrm{e}^{2ix}+10\mathrm{e}^{3ix}-\mathrm{e}^{4ix}} \, dx.  \\
\end{align*}
Making changes of variables $ \mathrm{e}^{ix}=z$ we have
\begin{equation} \label{ak}
a_n=-\frac{216}{2\pi i} \int \limits_{|z|=1} \frac{z^{-n+1}}{1-10z-54z^2-10z^3+z^4}\, dz.
\end{equation}

\begin{figure}[h!]
		\center{\includegraphics[width=0.5\linewidth]{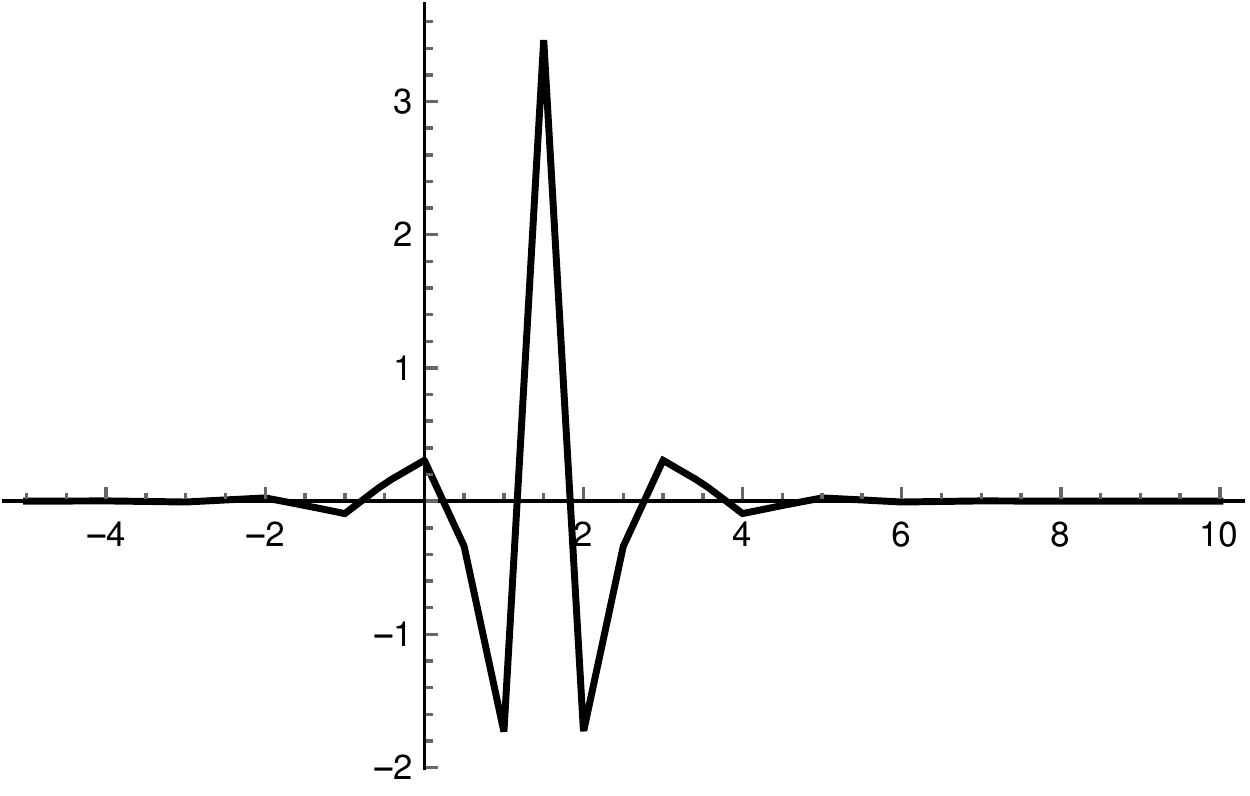}}
	\caption{The dual wavelet $\psi_2^*$}
	\label{example3-1}
\end{figure}

Let us further consider the following function $q(z)=\frac{z^{-n+1}}{1-10z-54z^2-10z^3+z^4}$.  The numbers $z_0=7-4\sqrt{3}$, $z_1=-2-\sqrt{3}$, $z_2=-2+\sqrt{3}$ and $z_3=7+4\sqrt{3}$ are roots of the denominator.

Further we consider two cases.

1) $n>1$:
$$
\frac{1}{z^{n-1}}\frac{1}{1-10z-54z^2-10z^3+z^4}=\sum \limits_{l=1}^{n-1}\frac{A_l}{z^l}+\frac{B_0}{z-z_0}+\frac{B_1}{z-z_1}+\frac{B_2}{z-z_2}+\frac{B_3}{z-z_3},
$$
where $A_l$ and $B_s$ are some constants. Multiplying both parts of the last equality by $z-z_s$, $s=0,1,2,3$, and then putting $z=z_s$ we find that
\begin{equation} \label{bs}
B_s=\frac{1}{z_s^{n-1}\prod\limits_{i=0,1,2,3, i\neq s}(z_s-z_i)}.
\end{equation}

It is easy to see that $A_1=-B_0-B_1-B_2-B_3$, $A_l=0$, $l=2,3,...,n-1$. Then using (\ref{ak}) and Cauchy's integral formula we get that
$$
a_n=-216 \left(\frac{1}{2\pi i} \int \limits_{|z|=1} \dfrac{A_1}{z} \, dz + \sum \limits_{s=0}^3 \frac{1}{2\pi i} \int \limits_{|z|=1} \dfrac{B_s}{z-z_s} \, dz \right) =-216(A_1+B_0+B_2)=216(B_1+B_3).
$$
Using formula for $B_s$ (\ref{bs}) we have
$$
a_n=(6-4\sqrt{3})(-2+\sqrt{3})^{n-1}+(-6+7\sqrt{3}/2)(7-4\sqrt{3})^{n-1}, \, n>1.
$$

2) $n\leq1$: here we use notation
$$
B_s=\frac{1}{\prod\limits_{i=0,1,2,3, i\neq s}(z_s-z_i)}.
$$
From (\ref{ak}) and Cauchy's integral formula we obtain
$$
a_n=-216\left(\sum \limits_{s=0}^3 \frac{1}{2\pi i} \int \limits_{|z|=1} \dfrac{B_sz^{-n+1}}{z-z_s} \, dz \right)=-216\left(B_0 z_0^{-n+1}+B_2z_2^{-n+1}\right).
$$
Substituting $B_s$ in the last formula we get that
$$
a_n=(-6-4\sqrt{3})(-2-\sqrt{3})^{n-1}+(6+7\sqrt{3}/2)(7+4\sqrt{3})^{n-1}.
$$
\end{proof}

By using similar technique as in the proof of Theorem \ref{dual-wav-repr} it is easy to show that for $N_2^*$,
$$
N_2^*(x)=\sum\limits_{n\in \zz} b_n N_2(x+1-n),
$$
where $b_n=(-1)^n \sqrt{3} (2-\sqrt{3})^{|n|}$.

Note that the coefficients $a_n$ from Theorem \ref{dual-wav-repr} and $b_n$ from representation for $N_2^*$ decay exponentially with respect to $|n|$, what is crucial for the construction of cubic Faber splines.

\section{Construction of piecewise cubic Faber splines in the space of compactly supported continuous functions}
In this section we present a construction of piecewise cubic Faber spline basis and prove the uniform convergence in the space $C_0(\re)$.

First we give some necessary definitions. We define the cardinal spline function as (see \cite{Chuibook} for details)
\begin{equation}\label{card-sp}
L^m(x):=\sum \limits_{n \in \zz} c_n^{(m)}N_m(x+m/2-n),
\end{equation}
with the property $L^m(j)=\delta_{j,0}$, $j \in \zz$.
By $J^mf(x)$ we define the following interpolation polynomial
\begin{equation}\label{intpol}
(J^mf)(x):=\sum \limits_{n \in \zz} f(n)L^m(x-n).
\end{equation}
It is clear that $J^mf(j)=f(j)$ for $j \in \zz$.

Further we are interested in the case $m=4$ (see \cite[P. 112]{Chuibook})
$$
L^4(x)=\sum \limits_{n \in \zz} (-1)^n \sqrt{3} (2-\sqrt{3})^{|n|} N_4(x+2-n).
$$
For $N \in \N$ we define the scaling version of the operator $J^4$ as
\begin{equation}\label{oper-j4}
(J^4_Nf)(x)=\sum \limits_{n \in \zz} f(2^{-N} n) L^4(2^N x-n),
\end{equation}
and then $(J^4_Nf)(n/2^N)=f(n/2^N)$ for $n\in \zz$. By $V_N^4$ we denote the space
$$
V_N^4:= \left\{ f: \, f:=\sum \limits_{n \in \zz} c_n N_4(2^N\cdot-n), \, \{c_n\}_{n \in \zz} \in \ell_1 \right\}.
$$
Due to the compactness of the support of $N_4$ we have $V_N^4\subset L_1(\re)$.	
\subsection{Construction of piecewise cubic Faber splines} In this subsection we show the main ideas of the construction of piecewise cubic Faber splines.

\begin{lem} \label{recov2}
	Every $f \in V_N^4$ can be reproduced by the operator $S_N$, i.e. $(S_Nf)(x)=f(x)$, $\forall x \in \re$, where $S_N$ is defined as follows
	\begin{equation} \label{expansion1}
	S_Nf(x)=\sum \limits_{k \in \zz} f(k)L^4(x-k)+\sum \limits_{j =0}^{N-1} \sum \limits_{k \in \zz} \lambda_{j,k}(f) s_{j,k}(x),
	\end{equation}
	where  coefficients $\lambda_{j,k}(f)$ are defined as
	\begin{equation} \label{coeflin1}
	\lambda_{j,k}(f)=\dfrac{1}{6}\left(\Delta^4_{2^{-j-1}}f\left( \dfrac{2k}{2^{j+1}}\right)-4\Delta^4_{2^{-j-1}}f\left( \dfrac{2k+1}{2^{j+1}}\right)+\Delta^4_{2^{-j-1}}f\left( \dfrac{2k+2}{2^{j+1}}\right) \right),
	\end{equation}
	and piecewise cubic polynomials $s_{j,k}$ are defined as follows
	\begin{equation} \label{bspline}
	s_{j,k}(x)= \sum \limits_{n \in \zz} a_n  v(2^jx-k-n),
	\end{equation}
	here coefficients $a_n$ are defined by (\ref{dualwavcoef}) and
\begin{equation} \label{bfun}
	v(t)=\frac{1}{36}
	\begin{cases}
	t^3, & 0\leq t \leq 1/2, \\
	1-6t+12t^2-7t^3, & 1/2<t \leq 1, \\
	-22+63t-57t^2+16t^3, & 1<t\leq 3/2, \\
	86-153t+87t^2-16t^3, & 3/2<t\leq 2, \\
	-98+123t-51t^2+7t^3, & 2<t\leq 5/2, \\
	27-27t+9t^2-t^3, & 5/2<t\leq 3,\\
	0,&\text{otherwise}.	
	\end{cases}
	\end{equation}
\end{lem}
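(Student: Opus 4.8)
The plan is to realise $S_N$ as a telescoping sum of the cardinal interpolation operators $J^4_j$, to reduce the whole statement to a single ``detail identity'' at each scale, and then to prove that identity by differentiating twice, so that the linear Chui--Wang wavelet $\psi_2$ and its dual from Theorem~\ref{dual-wav-repr} come into play. The point is that $\frac{d^2}{dx^2}$ sends the cubic spline world ($V_j^4$, $L^4$) into the linear spline world of Section~2, where Theorem~\ref{dual-wav-repr} lives.

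First I would record the reproduction property: for $f\in V_N^4$ the cardinal interpolant reproduces $f$, i.e. $J^4_Nf=f$. This is the only place the hypothesis $f\in V_N^4$ is essential; it follows from $L^4(j)=\delta_{j,0}$ together with uniqueness of cardinal cubic spline interpolation (so that $\{L^4(2^N\cdot-n)\}_n$ and $\{N_4(2^N\cdot-n)\}_n$ span the same space $V_N^4$), which can be quoted from \cite{Chuibook}. Telescoping then gives
$$
f=J^4_Nf=J^4_0f+\sum_{j=0}^{N-1}\bigl(J^4_{j+1}f-J^4_jf\bigr),
$$
where $J^4_0f=\sum_{k\in\zz}f(k)L^4(\cdot-k)$ is precisely the starting term of (\ref{expansion1}). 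It remains to prove, for every $j$, the detail identity $\bigl(J^4_{j+1}f-J^4_jf\bigr)(x)=\sum_{k\in\zz}\lambda_{j,k}(f)\,s_{j,k}(x)$. Since $s_{j,k}(x)=s_{0,0}(2^jx-k)$ and the differences in (\ref{coeflin1}) carry the matching dilation (one checks $\Delta^4_{2^{-j-1}}f(2k/2^{j+1})=\Delta^4_{1/2}\tilde f(k)$ with $\tilde f:=f(2^{-j}\cdot)$), a change of variables reduces everything to the case $j=0$, which I would establish for any $f$ for which both sides are defined.

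For $j=0$ set $g:=J^4_1f-J^4_0f$. Both interpolants agree with $f$ at the integers, so $g$ vanishes on $\zz$, while $J^4_1f$ reproduces $f$ at all half-integers. The key step is to pass to $g''$: since $N_4''$ is a combination of hats, $(V_1^4)''\subseteq V_1$, so $g''$ is a linear spline in $V_1$, and I claim $g''\in W_0$. Indeed, testing against the hats $N_2(\cdot-k)$ spanning $V_0$ and integrating by parts twice gives $\langle g'',N_2(\cdot-k)\rangle=g(k)-2g(k+1)+g(k+2)=0$ because $g$ vanishes at integers; by semiorthogonality $g''\perp V_0$, hence $g''\in W_0$. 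Expanding in the biorthogonal Riesz basis of $W_0$ from Theorem~\ref{dual-wav-repr},
$$
g''=\sum_{k\in\zz}\mu_k\,\psi_2^*(\cdot-k),\qquad \mu_k=\langle g'',\psi_2(\cdot-k)\rangle .
$$
Now $(J^4_0f)''\in V_0\perp W_0\ni\psi_2(\cdot-k)$, so the coarse part drops out and $\mu_k=\langle (J^4_1f)'',\psi_2(\cdot-k)\rangle$; integrating by parts twice moves the derivatives onto $\psi_2$, whose distributional second derivative is a finite sum of Dirac masses at the knots $0,\tfrac12,\dots,3$. As $J^4_1f$ interpolates $f$ at every such knot, $\mu_k$ becomes a fixed finite combination of $f(k),f(k+\tfrac12),\dots,f(k+3)$, and a short bookkeeping check shows it equals exactly $\lambda_{0,k}(f)$ of (\ref{coeflin1}). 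Finally, $v$ in (\ref{bfun}) is built so that $v''=\psi_2$ (a piecewise verification on $[0,3]$), whence $s_{0,0}''=\sum_n a_n\psi_2(\cdot-n)=\psi_2^*$ by Theorem~\ref{dual-wav-repr}. Thus $g''=\bigl(\sum_k\mu_k s_{0,k}\bigr)''$, so $g-\sum_k\mu_k s_{0,k}$ is affine; being bounded and decaying it vanishes, giving $g=\sum_k\lambda_{0,k}(f)s_{0,k}$.

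I expect the main obstacle to be the passage to $W_0$ together with the clean extraction of the coefficients: the argument hinges on semiorthogonality, which forces $(J^4_0f)''$ out of the pairing and lets the \emph{explicit} dual wavelet of Theorem~\ref{dual-wav-repr} turn the $W_0$-component of $g''$ back into the sampling functionals $\lambda_{0,k}(f)$. The remaining points are routine but must be handled with care: the identity $v''=\psi_2$ and hence $s_{0,0}''=\psi_2^*$; the finite-difference bookkeeping identifying $\mu_k$ with $\lambda_{0,k}(f)$; and the convergence and ``affine $\Rightarrow 0$'' step, for which the exponential decay of the $a_n$ from Theorem~\ref{dual-wav-repr} is exactly what guarantees that $s_{0,0}$, and thus the whole series, is well localised.
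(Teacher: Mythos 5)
Your proof is correct, but it is organized quite differently from the paper's. The paper never telescopes the interpolation operators: it applies the biorthogonal decomposition (\ref{dual-repre}) directly to $f^{(2)}$, identifies the $V_0$-part with $(J^4f)^{(2)}$ by checking $\langle (f-J^4f)^{(2)}, N_{2;0,k}\rangle=0$ (the same second-difference computation you use to place $g''$ in $W_0$), computes \emph{all} wavelet coefficients $\langle f^{(2)},2^j\psi_{2;j,k}\rangle=2^{2j}\lambda_{j,k}(f)$ in one stroke via the Dirac-delta representation (\ref{dspline}) of $\psi_{2;j,k}^{(2)}$, observes that they vanish for $j\geq N$ by semiorthogonality (this is where $f\in V_N^4$ enters; the paper's proof of Lemma \ref{recov2} does not use Lemma \ref{recov1} at all --- that lemma is proved afterwards and serves only Lemma \ref{recov3}), and finally lifts everything with the explicit Taylor remainder $f(x)=\int_{-\infty}^x f^{(2)}(t)(x-t)\,dt$, which pins down the second antiderivative uniquely, so no ``affine $\Rightarrow 0$'' step is needed. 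Your telescoping $f=J^4_0f+\sum_{j=0}^{N-1}(J^4_{j+1}f-J^4_jf)$, the dilation reduction to $j=0$, the placement of $g''=(J^4_1f-J^4_0f)''$ in $W_0$, and the biorthogonal extraction $\mu_k=\langle g'',\psi_2(\cdot-k)\rangle=\lambda_{0,k}(f)$ are all sound, and there is no circularity in invoking Lemma \ref{recov1} since its proof is independent of Lemma \ref{recov2}; the core computations ($N_2^{(2)}$ and $\psi_2^{(2)}$ as combinations of Dirac masses, $v''=\psi_2$, hence $s_{0,0}''=\sum_n a_n\psi_2(\cdot-n)=\psi_2^*$ by Theorem \ref{dual-wav-repr}) coincide with the paper's. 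What your route buys is a transparent interpolation-theoretic reading: $S_N$ is literally the telescoped $J^4_N$, so Lemma \ref{recov3} and Remark \ref{sn-int} become immediate corollaries rather than separate statements. The price is that you need Lemma \ref{recov1} up front, must verify summability of the sample sequences at each scale to justify the $L_2$ and termwise-differentiation steps, and must run the decay argument to kill the affine ambiguity --- three pieces of bookkeeping that the paper's single global expansion of $f^{(2)}$ combined with Taylor's remainder formula dispatches simultaneously, together with the truncation at $j=N-1$.
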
	
\begin{proof}
	Since $f\in C^2(\re)$ (because $f \in V_N^4$), we can consider $f^{(2)}=(J^4f)^{(2)}+(f-J^4f)^{(2)}$. According to definition of the space $V_N^4$ we get that $f^{(2)} \in L_2(\re)$ and then from the viewpoint on (\ref{dual-repre}) we have the following expansion (in the sense of $L_2$ convergence):
	\begin{align*}
	f^{(2)}=& \sum \limits_{k \in \zz} \langle f^{(2)}, N_{2;0,k}\rangle N_{2;0,k}^*+\sum \limits_{j \in \zz_+} \sum \limits_{k \in \zz} \langle f^{(2)},2^j\psi_{2;j,k} \rangle \psi_{2;j,k}^*\\
	=&\sum \limits_{k \in \zz} \langle (J^4f)^{(2)}, N_{2;0,k}\rangle N_{2;0,k}^*+\sum \limits_{j \in \zz_+} \sum \limits_{k \in \zz} \langle (J^4f)^{(2)},2^j\psi_{2;j,k} \rangle \psi_{2;j,k}^*\\
	&+\sum \limits_{k \in \zz} \langle (f-J^4f)^{(2)}, N_{2;0,k}\rangle N_{2;0,k}^*+\sum \limits_{j \in \zz_+} \sum \limits_{k \in \zz} \langle (f-J^4f)^{(2)},2^j\psi_{2;j,k} \rangle \psi_{2;j,k}^*.
	\end{align*}

\begin{figure}[h!]
	\begin{minipage}[h]{0.45\linewidth}
		\center{\includegraphics[width=1\linewidth]{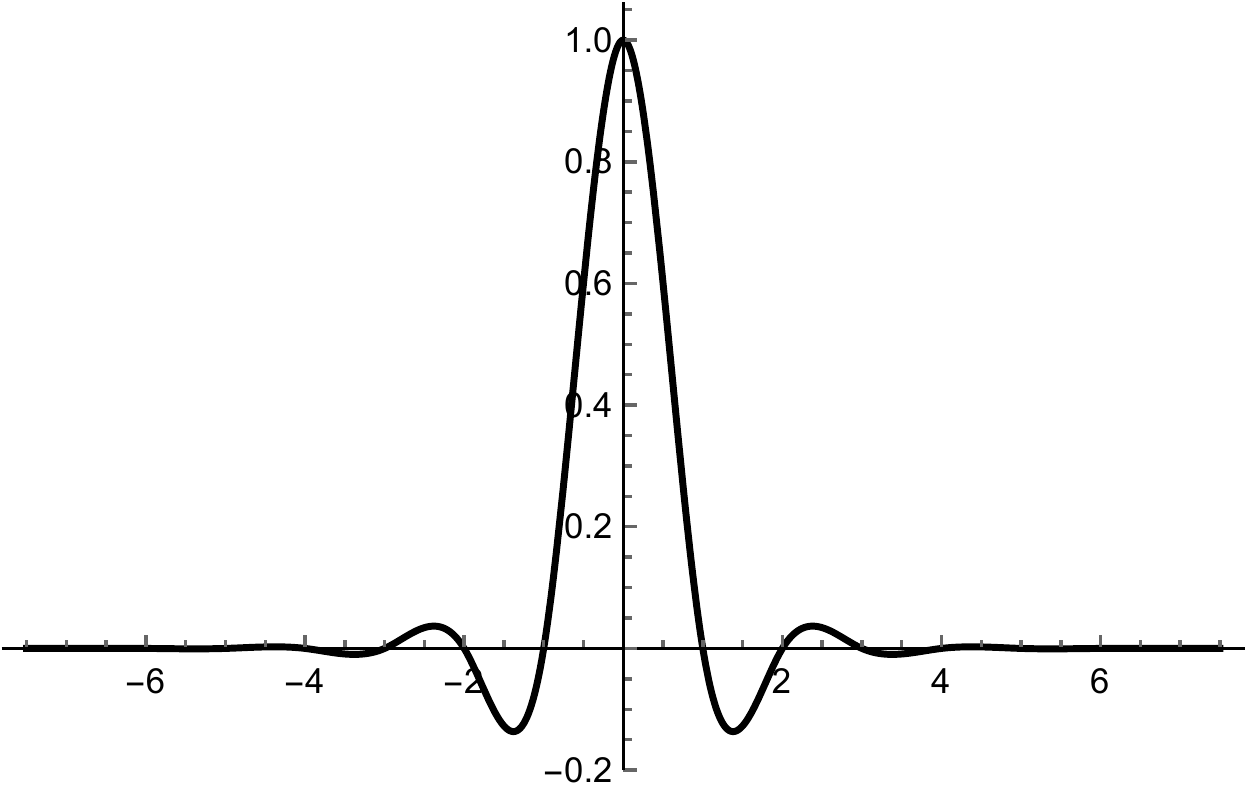}}
	\end{minipage}
	\hfill
	\begin{minipage}[h]{0.45\linewidth}
		\center{\includegraphics[width=1\linewidth]{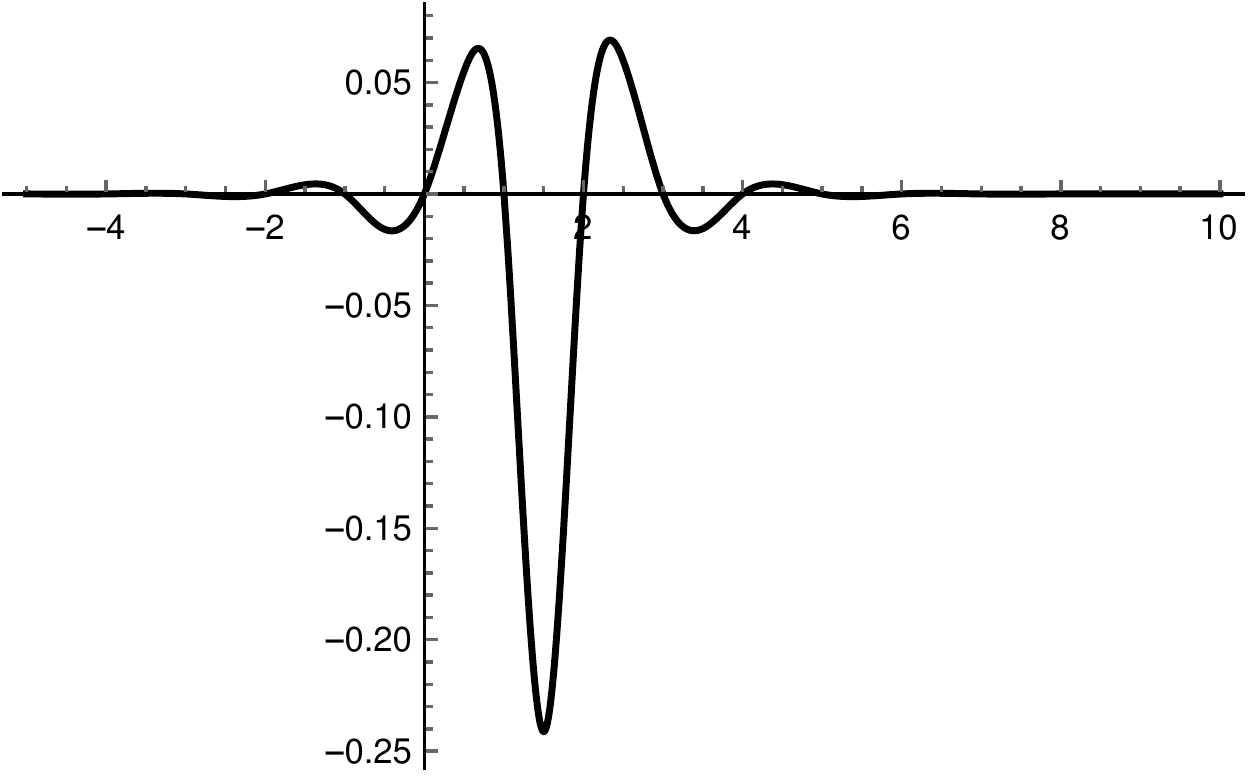}}
	\end{minipage}
	\caption{ The piecewise cubic basis functions $L^4$ (left) and $s_{0,0}$ (right)}
	\label{bas-f}
\end{figure}

	Since $(J^4f)^{(2)}$ can be represented as
	$$
	(J^4f)^{(2)}(x)=\sum \limits_{k \in \zz} \sum \limits_{j \in \zz} f(k) (-1)^{j}(2-\sqrt{3})^{|j|} \sum \limits_{m=0}^2 (-1)^m \binom{2}{m}N_2(x+2-j-k-m),
	$$
	then
	$(J^4f)^{(2)} \in V_0$ and consequently  $\sum \limits_{k \in \zz} \langle (J^4f)^{(2)}, N_{2;0,k}\rangle N_{2;0,k}^*=(J^4f)^{(2)}$.
	Further we will often use
	\begin{equation}\label{dspline}
	N_m^{(m)}(x)=\sum \limits_{j=0}^m (-1)^j \binom{m}{j} \delta(x-j),
	\end{equation}
were $\delta$ denotes  the Dirac delta distribution.

	Let us consider the following coefficients $\langle g^{(2)}, N_{2;0,k}\rangle$ for some $g \in C^2$:
	\begin{align*}
	\langle g^{(2)}, N_{2;0,k}\rangle&=\int \limits_{-\infty}^{\infty} g^{(2)}(t)N_2(t-k) \, dt =\int \limits_{-\infty}^{\infty} g(t)N_2^{(2)}(t-k) \, dt \\
	&= \sum \limits_{m=0}^2 (-1)^m \binom{2}{m} \int \limits_{-\infty}^\infty g(t)\delta(t-k-m) \, dt=\sum \limits_{m=0}^2 (-1)^m \binom{2}{m} g(k+m).
	\end{align*}
	According to this $\langle (f-J^4f)^{(2)}, N_{2;0,k}\rangle=\sum \limits_{m=0}^2 (-1)^m \binom{2}{m} (f-J^4f)(k+m)=0$ since $J^4f(j)=f(j)$ for $j \in \zz$. Then
	\begin{equation} \label{dexpansion}
	f^{(2)}=(J^4f)^{(2)}+\sum \limits_{j \in \zz_+} \sum \limits_{k \in \zz} \langle f^{(2)},2^j\psi_{2;j,k} \rangle \psi_{2;j,k}^*.
	\end{equation}
	Further we find the coefficients $\langle f^{(2)},2^j\psi_{2;j,k} \rangle$. From Theorem \ref{wavelet} and formula (\ref{dspline}) we get
	\begin{align*}
	\psi_{2;j,k}^{(2)}(x)&=2^{2j+1}\sum\limits_{l=0}^2 (-1)^l N_4(l+1) N_4^{(4)}(2^{j+1}x-2k-l)  \\
	&=2^{2j+1}\sum\limits_{l=0}^2 (-1)^l N_4(l+1)\sum \limits_{m=0}^4 (-1)^m \binom{4}{m}\delta(2^{j+1}x-2k-l-m).
	\end{align*}
	Therefore we can write for $\langle f^{(2)},2^j \psi_{2;j,k} \rangle$
	\begin{align}
	\langle f^{(2)},2^j\psi_{2;j,k} \rangle &= 2^j\int \limits_{-\infty}^\infty  f^{(2)}(t) \psi_{2;j,k}(t) \, dt = 2^j\int \limits_{-\infty}^\infty  f(t) \psi^{(2)}_{2;j,k}(t) \, dt \notag \\
	&=2^{3j+1} \sum\limits_{l=0}^2 (-1)^l N_4(l+1)\sum \limits_{m=0}^4 (-1)^m \binom{4}{m}  \int \limits_{-\infty}^\infty f(t) \delta(2^{j+1}t-2k-l-m) \, dt \notag \\
	&=2^{3j+1} \sum\limits_{l=0}^2 (-1)^l N_4(l+1)\sum \limits_{m=0}^4 (-1)^m \binom{4}{m} \dfrac{1}{2^{j+1}} \int \limits_{-\infty}^\infty f(t) \delta\left(t-\dfrac{2k+l+m}{2^{j+1}}\right) \, dt \notag \\
	&=2^{2j}\sum\limits_{l=0}^2 (-1)^l N_4(l+1)\sum \limits_{m=0}^4 (-1)^m \binom{4}{m} f \left(\dfrac{2k+l+m}{2^{j+1}} \right) \notag \\
	&=2^{2j}\sum\limits_{l=0}^2 (-1)^l N_4(l+1) \Delta^4_{2^{-j-1}}f\left( \dfrac{2k+l}{2^{j+1}}\right).  \label{coef}
	\end{align}
	
	Let us further denote $\lambda_{j,k}(f)$ as in (\ref{coeflin1}).
	From (\ref{intpol}), (\ref{coeflin1}), (\ref{dexpansion}) and (\ref{coef}) we have the following expansion for the second derivative
	$$
	f^{(2)}=(J^4f)^{(2)}+\sum \limits_{j \in \zz_+ } \sum \limits_{k \in \zz} 2^{2j} \lambda_{j,k}(f) \psi_{2;j,k}^*.
	$$ 	

	Since $f \in V_N^4$ we have that $\langle f^{(2)},2^j \psi_{2;j,k}\rangle=0$ for $j\geq N$. Therefore,
	\begin{equation} \label{ddexpansion}
	f^{(2)}=(J^4f)^{(2)}+\sum \limits_{j=0}^{N-1} \sum \limits_{k \in \zz} 2^{2j} \lambda_{j,k}(f) \psi_{2;j,k}^*.
	\end{equation}  	
	By using Taylor expansion we get
	$$
	f(x)=f(x_0)+f'(x_0)(x-x_0)+\int\limits_{x_0}^x\frac{f^{(2)}(t)}{1!}(x-t) \, dt.
	$$
	When $x_0\rightarrow -\infty$ we get (because of $\lim \limits_{|x|\rightarrow \infty}f(x)=0$ and  $\lim \limits_{|x|\rightarrow \infty}f'(x)=0$)
	\begin{equation} \label{taylor-rem}
	f(x)=\int \limits_{-\infty}^x \frac{f^{(2)}(t)}{1!}(x-t) \, dt.
	\end{equation}
	Note, that
		\begin{equation} \label{bas-fun}
		\int\limits_{-\infty}^{x} \psi^*_{2;j,k}(t)(x-t) \, dt =\sum\limits_{n \in \zz} a_n \int\limits_{-\infty}^{x} \psi_2(2^j t -k-n) (x-t) \, dt =\frac{1}{2^{2j}} \sum\limits_{n \in \zz} a_n v(2^j x-k-n),
	\end{equation}
	where $v$ is defined by (\ref{bfun}).

Applying Taylor expansion to (\ref{ddexpansion}) and taking (\ref{taylor-rem}) and (\ref{bas-fun}) into account, we get the result.
\end{proof}

\subsection{Uniform convergence in the space $C_0(\re)$} Before formulating the main result of this subsection we prove some auxiliaries statements.
The following lemma is probably known, but since we were unable to find a reference we give a proof.
\begin{lem} \label{recov1}
	Every $f \in V_N^4$ can be reproduced by the fundamental spline interpolation operator $J_N^4$, i.e.
\begin{equation} \label{j4}
(J_N^4f)(x)=f(x), \ \forall x \in \re.
\end{equation}
\end{lem}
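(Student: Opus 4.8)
The plan is to reduce to the case $N=0$ by dilation and then to establish a uniqueness statement for cardinal cubic spline interpolation. Writing $f=\sum_{n\in\zz}c_nN_4(2^N\cdot-n)$ with $\{c_n\}\in\ell_1$ and setting $F(y):=\sum_{n\in\zz}c_nN_4(y-n)\in V_0^4$, one reads off directly from the definition (\ref{oper-j4}) that $(J_N^4f)(x)=(J_0^4F)(2^Nx)$ while $f(x)=F(2^Nx)$, so it suffices to prove $J_0^4F=F$ for every $F\in V_0^4$. First I would verify two structural facts about $J_0^4$. Since $L^4$ is an infinite linear combination of integer shifts of $N_4$ with exponentially decaying coefficients $d_j=(-1)^j\sqrt3\,(2-\sqrt3)^{|j|}$, and since the sampled sequence $\{F(n)\}_n$ itself lies in $\ell_1$ (it is the convolution of $\{c_n\}\in\ell_1$ with the finitely supported sequence $\{N_4(k)\}_k$), Young's inequality shows that the B-spline coefficients of $J_0^4F$ form an $\ell_1$ sequence, hence $J_0^4F\in V_0^4$; moreover, the interpolation property $L^4(j)=\delta_{j,0}$ gives $(J_0^4F)(m)=F(m)$ for all $m\in\zz$.

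Consequently $g:=F-J_0^4F$ belongs to $V_0^4$ and vanishes at every integer, and the claim reduces to showing $g\equiv0$. Writing $g=\sum_{n\in\zz}b_nN_4(\cdot-n)$ with $\{b_n\}\in\ell_1$ — a representation that is unique because the integer shifts of $N_4$ are linearly independent — I would evaluate at integers using $N_4(1)=N_4(3)=\tfrac16$, $N_4(2)=\tfrac23$, and $N_4(k)=0$ otherwise. The condition $g(m)=0$ for all $m$ then becomes the three-term recurrence
$$
b_{m-1}+4b_{m-2}+b_{m-3}=0,\qquad m\in\zz.
$$

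The heart of the argument is to solve this recurrence inside $\ell_1$. Its characteristic polynomial $\lambda^2+4\lambda+1$ has roots $\lambda_\pm=-2\pm\sqrt3$ with $|\lambda_+|=2-\sqrt3<1<2+\sqrt3=|\lambda_-|$, so the general solution is $b_n=\alpha\lambda_+^{\,n}+\beta\lambda_-^{\,n}$. The summand $\beta\lambda_-^{\,n}$ grows geometrically as $n\to+\infty$, and $\alpha\lambda_+^{\,n}$ grows geometrically as $n\to-\infty$; hence mere boundedness of $\{b_n\}$ — a fortiori membership in $\ell_1$ — forces $\alpha=\beta=0$. Therefore all $b_n=0$, so $g\equiv0$, that is $J_0^4F=F$, and undoing the dilation yields (\ref{j4}).

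The only genuinely delicate point is this coercion of both fundamental solutions to zero by the two-sided decay of the coefficient sequence; everything else (the $\ell_1$-membership of $J_0^4F$, the interpolation identity, and the dilation reduction) is routine verification. I would also note in passing that the decaying root $\lambda_+=-(2-\sqrt3)$ is exactly the base appearing in the coefficients of $L^4$, which reflects the fact that the fundamental spline is precisely the minimal-growth cardinal interpolant and explains why its expansion uses only the admissible (decaying) solution of the recurrence.
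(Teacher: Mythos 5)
Your proof is correct, but it takes a genuinely different route from the paper's. The paper's proof is a short algebraic verification on the generator: it computes $J^4(N_4)$ directly, expands $L^4$ into shifts of $N_4$, reindexes the double sum so that the inner sum collapses to $\sum_{l} c_l^{(4)} N_4(n+2-l)=L^4(n)=\delta_{n,0}$, and concludes $J^4N_4=N_4$; the general case then follows by linearity, shift invariance and dilation, with the interchange of $J_N^4$ and the $\ell_1$-convergent sum left implicit. You instead prove a uniqueness theorem for cardinal cubic interpolation: after the same dilation reduction and a verification that $J_0^4F\in V_0^4$ interpolates $F$ on $\zz$, you show that the only element of $V_0^4$ vanishing at all integers is zero. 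Your computations check out: $N_4(1)=N_4(3)=\tfrac16$, $N_4(2)=\tfrac23$ yield exactly the recurrence $b_{m-1}+4b_{m-2}+b_{m-3}=0$, its characteristic roots $-2\pm\sqrt3$ are distinct and split strictly across the unit circle, so the two-sided general solution $\alpha\lambda_+^n+\beta\lambda_-^n$ admits no nontrivial bounded (a fortiori $\ell_1$) instance, forcing $\alpha=\beta=0$. As to what each approach buys: the paper's identity is shorter and needs nothing beyond $L^4(n)=\delta_{n,0}$, whereas your argument establishes the stronger Schoenberg-type statement that interpolation from $V_N^4$ at the points $2^{-N}\zz$ is unique among splines with bounded coefficients, and it explains structurally why $L^4$ is built from the decaying root alone. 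You also handle one point more carefully than the paper, namely the $\ell_1$-membership of the coefficients of $J_0^4F$ (via convolution and Young's inequality), which is what legitimizes applying the coefficient recurrence to $g=F-J_0^4F$.
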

\begin{proof}
	We prove that $J^4(N_4)=N_4$. According to definition of $J^4$ we have
	\begin{align*}
	(J^4N_4)(x)&=\sum \limits_{k \in \zz} N_4(k)L^4(x-k)= \sum \limits_{k \in \zz} N_4(k) \sum \limits_{l \in \zz} c_l^{(4)} N_4(x+2-l-k) \\
	&= \sum \limits_{n \in \zz} \sum \limits_{l \in \zz}  N_4(n+2-l) c_l^{(4)} N_4(x-n)=\sum \limits_{n \in \zz} N_4(x-n) \sum \limits_{l \in \zz} c_l^{(4)}  N_4(n+2-l) \\
	&=\sum \limits_{n \in \zz} N_4(x-n)  L^4(n)=\sum \limits_{n \in \zz} N_4(x-n) \delta_{n,0}=N_4(x).
	\end{align*}
Now (\ref{j4}) is a trivial consequence of the last equality.
\end{proof}

In the next lemma we consider functions $f \in C_0(\re)$, since then all sums in (\ref{expansion1}) is finite and $S_N f$ is well defined.

\begin{lem} \label{recov3}
	For every function $f \in C_0(\re)$ we have $S_Nf\equiv J_N^4f$, where $S_N$ and $J_N^4$ are defined by (\ref{expansion1}) and (\ref{oper-j4}) respectively.
\end{lem}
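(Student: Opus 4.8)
The plan is to reduce the claim to the two reproduction statements already at hand by observing that both operators $S_N$ and $J_N^4$ see a function $f$ only through its samples on the fine grid $2^{-N}\zz$. Concretely, I would set $g:=J_N^4f$ and then argue three points in turn: (a) $g\in V_N^4$; (b) $g$ and $f$ carry the same samples, i.e. $g(n/2^N)=f(n/2^N)$ for all $n\in\zz$; and (c) $S_N$ depends on its argument only through the values on $2^{-N}\zz$. Granting (a)--(c), one gets $S_Nf=S_Ng$ from (b) and (c), while Lemma \ref{recov2} applied to $g\in V_N^4$ gives $S_Ng=g=J_N^4f$, whence $S_Nf\equiv J_N^4f$.

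For (a), since $f\in C_0(\re)$ only finitely many of the numbers $f(2^{-N}n)$ are nonzero, so by (\ref{oper-j4}) the function $g$ is a \emph{finite} linear combination of the shifts $L^4(2^N\cdot-n)$; inserting the expansion of $L^4$ into dilated B-splines $N_4(2^N\cdot-m)$ exhibits $g$ as an $\ell_1$ (indeed exponentially decaying) combination of the $N_4(2^N\cdot-m)$, i.e. $g\in V_N^4$. For (b) I would simply invoke the interpolation identity recorded right after (\ref{oper-j4}), namely $(J_N^4f)(n/2^N)=f(n/2^N)$, which gives $g(n/2^N)=f(n/2^N)$ for every $n\in\zz$.

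The crux is step (c): one must check from (\ref{expansion1}) and (\ref{coeflin1}) that $S_Nf$ is assembled only from values of $f$ on $2^{-N}\zz$. The first sum uses $f(k)$ with $k\in\zz\subset 2^{-N}\zz$, and each coefficient $\lambda_{j,k}(f)$ with $0\le j\le N-1$ is, by (\ref{coeflin1}), a fixed linear combination of the values $f\big(\tfrac{2k+l+m}{2^{j+1}}\big)$ for $0\le l\le 2$ and $0\le m\le 4$. Here the truncation $j\le N-1$ is essential: it forces $j+1\le N$, so that all these arguments lie in $2^{-N}\zz$. Consequently the coefficients produced by (\ref{expansion1}) for $f$ and for $g$ coincide by (b), and $S_Nf\equiv S_Ng$. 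The one point that requires genuine care is that $g=J_N^4f$ is itself \emph{not} compactly supported, since the fundamental spline $L^4$ has exponential tails; nevertheless its samples on $2^{-N}\zz$ vanish off a finite set precisely because those of $f$ do, so the expansion (\ref{expansion1}) for $g$ remains a finite sum and the term-by-term comparison is legitimate. Combining everything with Lemma \ref{recov2} then completes the proof.
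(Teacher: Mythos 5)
Your proposal is correct and follows essentially the same route as the paper's proof: apply Lemma \ref{recov2} to $g=J_N^4f\in V_N^4$ and use that $S_N$ sees its argument only through the samples on $2^{-N}\zz$, where $g$ and $f$ agree, so $S_Nf=S_Ng=g=J_N^4f$. Your steps (a)--(c) merely make explicit what the paper leaves implicit (membership $J_N^4f\in V_N^4$, the fact that all evaluation points $(2k+l+m)/2^{j+1}$ with $j\le N-1$ lie in $2^{-N}\zz$, and the finiteness of the sums despite $g$ not being compactly supported), which is a welcome tightening rather than a different argument.
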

\begin{proof}
	Let $f \in C_0(\re)$ and $x \in \supp f$. Since $J_N^4f \in V_N^4$ according to Lemma \ref{recov2} we have that $S_N(J_N^4f)(x)=J_N^4f(x)$. On the other hand, since $J_N^4f(k/2^N)=f(k/2^N)$, $k \in \zz \cap \supp f$, according to definition of $S_N$ we get that $S_N(J_N^4f)(x)=S_Nf(x)$.  	
\end{proof}	

\begin{rem}\label{sn-int}
As a consequence from the last lemma we have that the operator $S_N$ interpolates a function $f \in C_0(\re)$ at points $k/2^N$, $k \in \zz$, i.e.
$$
S_Nf(k/2^N)=f(k/2^N), \ \ k \in \zz.
$$	
\end{rem}

\begin{satz}\label{conv-c}
	For a  function $f \in C_0(\re)$, we have that
	\begin{equation}\label{C-conv}
	\lim \limits_{N\rightarrow \infty} \|f - S_Nf\|_{\infty}=0,
	\end{equation}
	where $S_N$ is defined by (\ref{expansion1}).
\end{satz}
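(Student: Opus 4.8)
The plan is to reduce the statement to the convergence of the cardinal spline interpolation operator $J_N^4$ and then to prove the latter by a standard approximate-identity argument. By Lemma \ref{recov3} we have $S_Nf\equiv J_N^4f$ for every $f\in C_0(\re)$, so it suffices to show that $\|f-J_N^4f\|_{\infty}\to 0$ as $N\to\infty$, where $(J_N^4f)(x)=\sum_{n\in\zz}f(2^{-N}n)L^4(2^Nx-n)$.

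First I would record two properties of the fundamental interpolant $L^4$. From the explicit expansion $L^4(x)=\sum_{m\in\zz}(-1)^m\sqrt{3}(2-\sqrt{3})^{|m|}N_4(x+2-m)$ together with the B-spline partition of unity $\sum_{n\in\zz}N_4(\cdot-n)\equiv 1$, one obtains $\sum_{n\in\zz}L^4(x-n)=\sqrt{3}\sum_{m\in\zz}(-1)^m(2-\sqrt{3})^{|m|}=1$ after summing the geometric series. Moreover, since $N_4$ is bounded with support $[0,4]$ and the coefficients decay like $(2-\sqrt{3})^{|m|}$, I get the pointwise bound $|L^4(x)|\ls(2-\sqrt{3})^{|x|}$; this yields both the uniform Lebesgue-constant bound $\Lambda:=\sup_{y\in\re}\sum_{n\in\zz}|L^4(y-n)|<\infty$ and, crucially, the uniform tail estimate $\sup_{y\in\re}\sum_{|n-y|\ge R}|L^4(y-n)|\to 0$ as $R\to\infty$.

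Using the partition of unity to write $f(x)=\sum_{n}f(x)L^4(2^Nx-n)$, the error takes the form
$$
f(x)-(J_N^4f)(x)=\sum_{n\in\zz}\big(f(x)-f(2^{-N}n)\big)L^4(2^Nx-n).
$$
Now fix $\eps>0$. Since $f\in C_0(\re)$ is uniformly continuous, I choose $\delta>0$ so that $|f(x)-f(x')|<\eps$ whenever $|x-x'|<\delta$, and then split the sum according to whether $|2^{-N}n-x|<\delta$ or not. On the near part the factor $|f(x)-f(2^{-N}n)|<\eps$ while the corresponding sum of $|L^4(2^Nx-n)|$ is bounded by $\Lambda$, contributing at most $\eps\Lambda$. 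On the far part $|f(x)-f(2^{-N}n)|\le 2\|f\|_{\infty}$, whereas $|2^Nx-n|\ge 2^N\delta$, so by the uniform tail estimate with $R=2^N\delta$ the sum of $|L^4(2^Nx-n)|$ over these indices is $o(1)$ as $N\to\infty$, uniformly in $x$. Combining the two parts gives $\|f-J_N^4f\|_{\infty}\le\eps\Lambda+2\|f\|_{\infty}\,o(1)$, and letting first $N\to\infty$ and then $\eps\to 0$ yields (\ref{C-conv}).

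The routine verifications here are the geometric summation for the partition of unity and the exponential bound on $L^4$. The only genuinely delicate point is that the far-term tail must be controlled uniformly in $x$ while the splitting threshold $R=2^N\delta$ grows without bound; this is precisely what the exponential decay of $L^4$ guarantees, and it is the decisive ingredient that makes the approximate-identity argument go through.
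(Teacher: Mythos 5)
Your proof is correct, but it takes a genuinely different route from the paper. The paper's proof never touches the kernel $L^4$ directly: it invokes Lemma \ref{recov3} to replace $S_N$ by $J_N^4$ (as you do), and then runs the standard stability-plus-projection scheme
$$
\|f-J_N^4f\|_\infty \le \big(1+\|J_N^4\|_{\infty\to\infty}\big)\,\|f-Q_N^4f\|_\infty,
$$
where $Q_N^4$ is a quasi-interpolation operator, using Lemma \ref{recov1} to see that $J_N^4$ reproduces $Q_N^4f\in V_N^4$; the two nontrivial ingredients, $\|f-Q_N^4f\|_\infty\to0$ and the uniform boundedness of the Lebesgue constant $\|J_N^4\|_{\infty\to\infty}$, are imported from D\~{u}ng \cite{DD2016} and Richards \cite{Rich75} respectively. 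You instead give a direct approximate-identity argument: you verify the partition of unity $\sum_n L^4(\cdot-n)\equiv1$ by summing the geometric series $\sqrt{3}\sum_m(-1)^m(2-\sqrt{3})^{|m|}=1$ (this computation checks out), derive the exponential bound $|L^4(x)|\ls(2-\sqrt{3})^{|x|}$ from the compact support of $N_4$ and the decay of the coefficients, and then split the error sum at scale $\delta$ using uniform continuity of $f$. The decisive point you correctly identify is that the tail estimate must hold uniformly in $x$ with threshold $R=2^N\delta\to\infty$, which the exponential decay delivers. What each approach buys: the paper's proof is shorter on the page and generalizes mechanically to higher-order splines, but it is not self-contained (two external references carry the weight); your proof is entirely self-contained -- in effect you re-prove the relevant special case of the Lebesgue-constant bound from \cite{Rich75} as part of the argument -- at the cost of an explicit computation specific to $L^4$, though the same scheme would work for any $L^{2m}$ once the analogous coefficient decay and constant-reproduction are established.
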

\begin{proof}
	By $Q_N^m$ we denote a quasi-interpolation operator
	$$
	Q_{N}^m(x)=\sum\limits_{k \in \nu(N)} a_k^{(N)}(f) N_m(2^N x-k),
	$$
	where the set $\nu(N)$ is  finite and the functional $a_k^{(N)}(f)$ is defined by using finite number of function values (see \cite{DD2016} for details).
	Since according to Lemma \ref{recov3} we have that $S_N=J_N^4$, we can write
	\begin{align*}
	\|f - S_Nf\|_{\infty}&=\|f - J_N^4f\|_{\infty}\leq \|f - Q_{N}^4f\|_{\infty}+ \| Q_{N}^4f-J_N^4f\|_{\infty}\\
	&=\|f - Q_{N}^4f\|_{\infty} + \|J_N^4\left(f - Q_{N}^4f \right)\|_{\infty} \leq \left(1+ \|J_N^4\|_{\infty \rightarrow \infty}\right) \|f - Q_{N}^4f\|_{\infty}.
	\end{align*}
	We used that $Q_{N}^4f \in V_N^4$ and according to Lemma \ref{recov1} $J_N^4(Q_{N}^4f)=Q_{N}^4f$.
	
	Further we use the facts that $\|f - Q_{N}^4f\|_{\infty} \rightarrow 0$ if $N\rightarrow \infty$ (see \cite{DD2016}) and the norm $\|J_N^4\|_{\infty \rightarrow \infty}$ is bounded (see \cite{Rich75}). It implies  (\ref{C-conv}).
\end{proof}

\begin{rem}\label{rem-wang}
We would like to point out that the paper \cite{Wang96} is rather close to us. However, a significant difference is the fact, that our basis is the integrated (lifted) dual Chui-Wang wavelet. This dual wavelet had to be determined explicitly in a first step. It is not compactly supported however very well localized (exponentially decaying). Similar as in \cite{Wang96, Wang95} the basis coefficients of the Faber spline basis are computed from a linear combination of discrete function values. In contrast to \cite{Wang96} these coefficients are explicitly determined and use only local discrete  information of the function $f$. This is in analogy to the classical Faber-Schauder basis which is the reason why we call it Faber spline.
\end{rem}

Further for the convenience we use the following notation. We define sequence of coefficients $\lambda_{j,k}(f)$ and functions $s_{j,k}$ for $j\in \N_{-1}$, $k \in \zz$, in the following way: if $j\geq 0$ we use definition (\ref{coeflin1}) and (\ref{bspline}) consequently, and if $j=-1$ we put
$
\lambda_{-1,k}(f):=f(k)
$
and
\begin{equation}\label{b2}
s_{-1,k}(x):=L^4(x-k).
\end{equation}
Then from Theorem \ref{conv-c} for each function $f\in C_0(\re)$ we have the following expansion
\begin{equation}\label{repr-b}
f(x)=\sum \limits_{j \in \N_{-1}} \sum \limits_{k \in \zz} \lambda_{j,k}(f) s_{j,k}(x),
\end{equation}
where convergence is understood in the sense of the space $C$.
	
Further we prove the uniqueness of this expansion. We  show that all coefficients $c_{j,k}$ in the following expansion
	\begin{equation} \label{expansion-f-un}
0=\sum \limits_{k \in \zz} c_{-1,k}L^4(x-k)+\sum \limits_{j =0}^{\infty} \sum \limits_{k \in \zz} c_{j,k} s_{j,k}(x),
\end{equation}
 equal to $0$. Note that from definition of piecewise cubic functions $s_{j,k}$ we have that  $s_{j,k}(n)=0$ for $n \in \zz$. If we put $x=n$, $n \in \zz$, in (\ref{expansion-f-un}) we get $c_{-1,k}=0$ for all $k \in \zz$. Then we apply the functional $\lambda_{l,n}(f)$ with $l \in \N_0$ and $n \in \zz$ to the series (\ref{expansion-f-un}). Since
 $$
 \lambda_{l,n}(s_{j,k})= \langle s_{j,k}^{(2)}, 2^m \psi_{2;l,n} \rangle=2^{2j}2^m  \langle \psi^*_{2;j,k},  \psi_{2;l,n} \rangle=2^{2j+m} \delta_{jl,kn},
 $$
 we have that   $c_{l,n}=0$ for all $l \in \N_0$ and $n \in \zz$.

\section{Sampling characterization of Besov-Triebel-Lizorkin spaces via piecewise cubic Faber splines}

In this Section we prove Theorem about sampling characterization of Besov-Triebel-Lizorkin spaces. We recall that definition of these spaces is given in Appendix A.

The following theorem holds.
\begin{satz}\label{charact3} \begin{itemize}
		\item [(i)]
	Let $0< p,\theta\leq \infty$, $p>1/4$ and $1/p<r<\min\{3+1/p,4\}$. Then every compactly supported $f \in B_{p,\theta}^r$ can be represented by the series (\ref{repr-b}), which is convergent unconditionally in the space $B_{p,\theta}^{r-\varepsilon}$  for every $\varepsilon>0$. If $\max\{p,\theta\}<\infty$ we have unconditional convergence in the space $B_{p,\theta}^{r}$.  Moreover, the following norms are equivalent
	\begin{equation}\label{norm-equi}
	\|f\|_{B_{p,\theta}^r} \asymp \|\lambda(f) \|_{b_{p,\theta}^r} .
	\end{equation}
		\item [(ii)] 	Let $1/4< p,\theta\leq \infty$, $p\neq \infty$, and $\max\{1/p,1/\theta\}<r<3$. Then every compactly supported $f \in F_{p,\theta}^r$ can be represented by the series (\ref{repr-b}), which is convergent unconditionally in the space $F_{p,\theta}^{r-\varepsilon}$  for every $\varepsilon>0$. If $\theta<\infty$ we have unconditional convergence in the space $F_{p,\theta}^{r}$.  Moreover, the following norms are equivalent
		\begin{equation}\label{norm-equi-f}
	\|f\|_{F_{p,\theta}^r} \asymp	\|\lambda(f) \|_{f_{p,\theta}^r}  .
		\end{equation}
		\end{itemize}
\end{satz}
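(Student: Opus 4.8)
The plan is to establish the claimed norm equivalences by proving two one-sided estimates --- a coefficient (analysis) bound $\|\lambda(f)\|_{b^r_{p,\theta}} \lesssim \|f\|_{B^r_{p,\theta}}$ and a reconstruction (synthesis) bound $\|\sum_{j,k}\lambda_{j,k}s_{j,k}\|_{B^r_{p,\theta}} \lesssim \|\lambda\|_{b^r_{p,\theta}}$ --- and then to identify the series with $f$ and upgrade its convergence. The conceptual engine is the lifting relation behind Lemma \ref{recov2}: two integrations (Taylor's remainder) turn $f$ into $f^{(2)}$, and by (\ref{ddexpansion}) the numbers $2^{2j}\lambda_{j,k}(f)$ are precisely the Chui--Wang dual-wavelet coefficients $\langle f^{(2)}, 2^j\psi_{2;j,k}\rangle$ of $f^{(2)}$, while the starting term collects into $(J^4f)^{(2)}$. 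Since differentiation shifts Besov smoothness by an integer, I would reduce the whole statement to the biorthogonal Chui--Wang wavelet characterization of $B^{r-2}_{p,\theta}$ (the content of Section 5), transplanted back through the lifting. This is exactly where the admissible range originates: the linear spline wavelet $\psi_2$ characterizes $B^s_{p,\theta}$ only for $s<\min\{2,1+1/p\}$ (the kink of the piecewise linear primal wavelet being the obstruction), and restoring the two lost derivatives yields $r<\min\{4,3+1/p\}$, matching (i); on the $F$-scale the kink is already felt at $s<1$, giving $r<3$ in (ii).

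For the analysis bound I would work directly with (\ref{coeflin1}): each $\lambda_{j,k}(f)$ is a fixed linear combination of fourth-order differences $\Delta^4_{2^{-j-1}}f$ sampled on the dyadic grid of step $2^{-j-1}$. Using the modulus-of-smoothness description of $B^r_{p,\theta}$ (legitimate since $0<r<4$) together with a standard discretization inequality comparing the $\ell_p$-sum of grid samples of $\Delta^4_{2^{-j-1}}f$ with $\omega_4(f,2^{-j})_p$, I would obtain $2^{-j/p}(\sum_k|\lambda_{j,k}(f)|^p)^{1/p} \lesssim \omega_4(f,2^{-j})_p$ and then assemble the weighted $\ell_\theta$-sum with weights $2^{jr}$ to reach $\|\lambda(f)\|_{b^r_{p,\theta}} \lesssim \|f\|_{B^r_{p,\theta}}$; the lower restriction $r>1/p$ is what makes the point samples, hence the differences, well defined. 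The starting level $j=-1$ (with $\lambda_{-1,k}=f(k)$ and $s_{-1,k}=L^4(\cdot-k)$) is treated separately via boundedness of the fundamental interpolation operator used in Theorem \ref{conv-c}. For part (ii) the same scheme runs with the Littlewood--Paley/maximal-function description of $F^r_{p,\theta}$, where the Fefferman--Stein vector-valued maximal inequality replaces the elementary summation and the extra requirement $1/\theta<r$ enters to control the inner $\ell_\theta$-norm.

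For the synthesis bound I would verify that the functions $s_{j,k}$ from (\ref{bspline})--(\ref{bfun}) form a system of molecules adequate for $B^r_{p,\theta}$ with $r<\min\{3+1/p,4\}$: they are cubic $C^2$-splines (hence of approximation order $4$), their integer-shift coefficients $a_n$ decay exponentially by Theorem \ref{dual-wav-repr} while $v$ is compactly supported, so each $s_{j,k}$ is exponentially localized around $2^{-j}k$, and the cancellation inherited from the vanishing moments of the underlying Chui--Wang wavelet supplies the required moment conditions. With these properties I would invoke the molecular decomposition theorem for $B^r_{p,\theta}$ (and its $F$-analogue), the only non-routine point being that the molecules are not compactly supported; the exponential decay, however, renders all off-diagonal scale/location interactions summable, so the usual almost-orthogonality estimate still goes through. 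Equivalently --- and this is the route I would actually favour --- I would push the two derivatives through and read the bound off the dual-wavelet synthesis estimate for $\sum 2^{2j}\lambda_{j,k}\psi^*_{2;j,k}$ in $B^{r-2}_{p,\theta}$ supplied by Section 5.

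Finally, to identify the series (\ref{repr-b}) with $f$ and settle convergence, I would combine density of $C_0(\re)$, the uniform reproduction of Theorem \ref{conv-c}, and the already-proved uniqueness of the expansion with the two norm bounds to conclude unconditional convergence in $B^{r-\varepsilon}_{p,\theta}$ for every $\varepsilon>0$, improving to $B^r_{p,\theta}$ when $\max\{p,\theta\}<\infty$ (respectively $\theta<\infty$ in the $F$-case), where the sequence spaces are separable and the tails of $\|\lambda(f)\|_{b^r_{p,\theta}}$ vanish. The main obstacle I anticipate is making the lifting rigorous in both directions --- controlling the non-compactly supported synthesis at the sharp upper smoothness range, and, for the Triebel--Lizorkin part, pushing the maximal-function estimates through at the endpoint-sensitive conditions $\max\{1/p,1/\theta\}<r<3$.
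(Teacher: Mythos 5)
Your overall architecture coincides with the paper's: a synthesis bound (Propositions \ref{charact1} and \ref{charactf1}), an analysis bound (Proposition \ref{charact2}), and the identification of the series with $f$ via the embedding $B^r_{p,\theta}\subset C_0(\re)$ for $r>1/p$, Theorem \ref{conv-c}, and uniqueness. Within that skeleton your techniques partly diverge. For synthesis, your first option (verify that the $s_{j,k}$ are molecule-like and sum the off-diagonal interactions) \emph{is} essentially the paper's proof: Lemma \ref{convol} supplies exactly the molecule estimates (\ref{conv-b-1}) and (\ref{conv-b-2}), with $\alpha=3$ for $l<0$ (from $v\in B^3_{\infty,\infty}$) and $\alpha=1$ for $l\geq 0$, and the exponential decay of the $a_n$ absorbs the non-compact support; the local-means characterization (Theorem \ref{local-mean}) plus $u$-triangle inequality, and in the $F$-case the Kyriazis lemma and the vector-valued maximal inequality, then do the bookkeeping. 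Your favoured alternative---lifting twice and reading the bound off the Chui--Wang characterization of $B^{r-2}_{p,\theta}$ from Section 5---is genuinely different from what the paper does, and your range accounting is accurate: the constraints of Theorem \ref{charact3-psi} at smoothness $r-2$ reproduce $1/p<r<\min\{3+1/p,4\}$ and $r<3$ exactly, and no circularity would arise since Section 5 rests only on Lemma \ref{convol-chw}.

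However, as sketched, the lifting route has two concrete gaps. First, at the coarsest level the correspondence breaks: differentiating the expansion twice turns the level $j=-1$ data into $\langle f^{(2)}, N_{2;0,k}\rangle=\Delta^2_1 f(k)$ (see the computation in Lemma \ref{recov2}), not into the point values $f(k)=\lambda_{-1,k}(f)$, and the sequence norms of $(\Delta^2_1f(k))_k$ and $(f(k))_k$ are not equivalent; the paper's direct treatment handles $j=-1$ separately in both propositions precisely to avoid this. Second, the lifting equivalence one must use is $\|f\|_{B^r_{p,\theta}}\asymp\|f\|_{B^{r-2}_{p,\theta}}+\|f^{(2)}\|_{B^{r-2}_{p,\theta}}$ (valid for all parameters), and the Section 5 synthesis applied to $f^{(2)}$ controls only the second summand; bounding $\|f\|_{B^{r-2}_{p,\theta}}$ by $\|\lambda\|_{b^r_{p,\theta}}$ still requires a synthesis estimate for the $s_{j,k}$ themselves at low smoothness, which partially defeats the reduction unless you add a separate (crude) $L_p$-level argument. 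On the analysis side, your single-scale inequality $2^{-j/p}\bigl(\sum_k|\lambda_{j,k}(f)|^p\bigr)^{1/p}\lesssim\omega_4(f,2^{-j})_p$ is false as stated: point evaluations of $\Delta^4_{2^{-j-1}}f$ are not dominated scalewise by one $L_p$-modulus, and the correct discretization (as in the D\~ung/Byrenheid line you allude to) carries a tail sum over finer scales whose summability uses $r>1/p$ only after the weighted $\ell_\theta$-summation. The paper avoids this entirely by applying the coefficient functionals to Littlewood--Paley blocks and invoking Lemma \ref{peetre-ineq} together with Lemmas \ref{peetre-ineq1} and \ref{peetre-ineq2}; this is where $r<4$ (fourth differences, the factor $\min\{1,|bh|^4\}$) and $a>1/p$, respectively $a>\max\{1/p,1/\theta\}$, enter---and it treats the $B$- and $F$-cases uniformly, whereas the modulus route in the $F$-case is substantially more delicate than your sketch acknowledges. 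Your final identification-and-upgrade paragraph matches the paper's argument.
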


First we prove some auxiliary statements. We apply  some known technique that was also used in \cite{Glen2018}, \cite{HMOT2016} and \cite{TV2019}. Let $\lambda=(\lambda_{j,k})_{j \in \N_{-1}, k \in \zz}$ be some sequence of real numbers that satisfy certain conditions (later we specify that $\lambda \in b_{p,\theta}^r$ or $\lambda \in f_{p,\theta}^r$). We denote
\begin{equation}\label{series}
f:=\sum \limits_{j \in \N_{-1}} \sum \limits_{k \in \zz} \lambda_{j,k} s_{j,k}.
\end{equation}
This formal series converges in  $S'(\re)$ due to assumptions on $\lambda$. The estimates below shows that (\ref{series}) exists in $S'(\re)$.

\begin{prop}\label{charact1}
Let $0< p,\theta\leq \infty$, $\max\{1/p-1,0\}<r<3+1/p$ and a sequence $\lambda \in b_{p,\theta}^r$. Then the series (\ref{series}) converges unconditionally in the space $B_{p,\theta}^{r-\varepsilon}$  for every $\varepsilon>0$. If $\max\{p,\theta\}<\infty$ we have unconditional convergence in the space $B_{p,\theta}^{r}$. Moreover, the following inequality holds
\begin{equation}\label{ch1}
\|f\|_{B_{p,\theta}^r} \lesssim \|\lambda \|_{b_{p,\theta}^r}.
\end{equation}
\end{prop}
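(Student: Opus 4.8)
The plan is to pass to the Littlewood--Paley description of $\|\cdot\|_{B_{p,\theta}^r}$ (Appendix A) and to reduce the whole series to a single, compactly supported building block. First note that $s_{j,k}(x)=s(2^jx-k)$ with $s:=s_{0,0}$, so for $j\ge 0$ the system consists of dyadic dilates and translates of one mother function, the remaining level $j=-1$ being the integer translates of $L^4$. Since $s=\sum_{n\in\zz}a_nv(\cdot-n)$ with $v$ compactly supported (support $[0,3]$) and $\{a_n\}$ decaying exponentially by Theorem \ref{dual-wav-repr}, I would first move the non-compactness into the coefficients: with $v_{j,m}:=v(2^j\cdot-m)$ one gets
\[
f=\sum_{k\in\zz}\lambda_{-1,k}L^4(\cdot-k)+\sum_{j\ge 0}\sum_{m\in\zz}\tilde\lambda_{j,m}v_{j,m},\qquad \tilde\lambda_{j,m}=\sum_{n\in\zz}a_n\lambda_{j,m-n}.
\]
The discrete convolution $\tilde\lambda=a*\lambda$ together with the exponential decay of $\{a_n\}$ and Young's inequality (in its $\ell_p$-form for $0<p\le\infty$) yields $\|\tilde\lambda\|_{b_{p,\theta}^r}\ls\|\lambda\|_{b_{p,\theta}^r}$, and $L^4$ is treated identically since its expansion coefficients decay exponentially as well. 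Thus it suffices to bound the $B_{p,\theta}^r$ norm of the synthesis of the compactly supported, piecewise cubic atom $v$.

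With $\varphi_\ell$ the Littlewood--Paley pieces one has $\|f\|_{B_{p,\theta}^r}=\big\|(2^{\ell r}\|\varphi_\ell*f\|_p)_{\ell\ge 0}\big\|_{\ell_\theta}$. Writing $f_j:=\sum_m\tilde\lambda_{j,m}v_{j,m}$ and $A_j:=2^{-j/p}\|(\tilde\lambda_{j,m})_m\|_{\ell_p}$, the core of the argument is the pair of building-block estimates
\[
\|\varphi_\ell*f_j\|_p\ls 2^{-(\ell-j)\sigma}A_j\ (\ell\ge j),\qquad \|\varphi_\ell*f_j\|_p\ls 2^{(j-\ell)(1/p-1)_+}A_j\ (\ell\le j),
\]
obtained from standard convolution/maximal-function estimates for the well-localized $v_{j,m}$. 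The decay exponent $\sigma$ on the fine side is governed purely by the smoothness of $v$: as $v\in C^2$ is piecewise cubic with $v'''$ of bounded variation, any $\sigma<3+1/p$ is admissible, which is precisely what dictates the upper bound $r<3+1/p$. On the coarse side no cancellation of $v$ is available (the atom has no vanishing moments), whence the possible growth $2^{(j-\ell)(1/p-1)_+}$ for $p<1$; this is what forces the lower restriction.

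To assemble the norm I set $b_j:=2^{jr}A_j$ and obtain $2^{\ell r}\|\varphi_\ell*f\|_p\ls\sum_j K_{\ell-j}\,b_j$ with $K_m=2^{m(r-\sigma)}$ for $m\ge 0$ and $K_m=2^{m(r-(1/p-1)_+)}$ for $m<0$. The kernel $(K_m)_m$ is summable exactly when $(1/p-1)_+<r<\sigma$, i.e.\ for all $r$ in the stated range, and a discrete convolution (Young's inequality on $\ell_\theta$ for $\theta\ge 1$, its quasi-Banach analogue for $\theta<1$) gives $\|f\|_{B_{p,\theta}^r}\ls\|(b_j)_j\|_{\ell_\theta}=\|\tilde\lambda\|_{b_{p,\theta}^r}\ls\|\lambda\|_{b_{p,\theta}^r}$, which is (\ref{ch1}). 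The case $p<1$ additionally uses the $p$-triangle inequality $\|\sum_j\varphi_\ell*f_j\|_p^{p}\le\sum_j\|\varphi_\ell*f_j\|_p^{p}$ when splitting over levels, and it is there that the growth factor $2^{(j-\ell)(1/p-1)_+}$ enters and yields the threshold $r>\max\{1/p-1,0\}$. Finally, applying the same estimate to the tails of the series proves unconditional convergence: for $\max\{p,\theta\}<\infty$ finitely supported sequences are dense in $b_{p,\theta}^r$, giving convergence in $B_{p,\theta}^r$; otherwise the embedding $b_{p,\theta}^r\hookrightarrow b_{p,\theta}^{r-\varepsilon}$ supplies the room needed for convergence in $B_{p,\theta}^{r-\varepsilon}$.

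The main obstacle is the fine-side building-block estimate with the sharp exponent $\sigma<3+1/p$, rather than the naive $\sigma<2+1/p$ that one would read off from $v\in C^2$ alone: capturing the extra $1/p$ requires exploiting that $v'''$ is only of bounded variation (piecewise constant with jumps), and carrying the $L_p$-dependence through so that both endpoints $r<3+1/p$ and $r>\max\{1/p-1,0\}$ come out sharp across the full quasi-Banach range $0<p,\theta\le\infty$.
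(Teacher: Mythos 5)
Your proposal is correct and follows essentially the same route as the paper's proof: the level decomposition $f=\sum_l f_{j+l}$, the two-sided building-block estimates --- decay $2^{-(\ell-j)\sigma}$ with $\sigma<3+1/p$ on the fine side, coming from the $B^3_{\infty,\infty}$-smoothness of $v$ together with the localization of the convolution near the knots (the paper's Lemma \ref{convol}, where the kernel's vanishing moments annihilate the cubic pieces and leave at most seven intervals of length $\sim 2^{-\ell}$, which is exactly your ``extra $1/p$ from the jumps of $v'''$''), and the growth $2^{(j-\ell)(1/p-1)_+}$ on the coarse side from the overlap count via H\"older resp.\ $\ell_p\hookrightarrow\ell_1$ --- followed by a geometric discrete convolution in $\ell_\theta$ with the $u$-triangle inequality, reproducing precisely the paper's restrictions $\max\{1/p-1,0\}<r<3+1/p$ and its density/embedding argument for unconditional convergence. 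The only differences are presentational: you pull the exponentially decaying coefficients out first via $\tilde\lambda=a*\lambda$ and reduce to the single compact atom $v$ (the paper instead keeps $\sum_n|a_n|^v$ inside the estimate via the $v$-triangle inequality), and you work with Littlewood--Paley blocks $\varphi_\ell$, which are not compactly supported in time, so your knot-localization estimate must be run in its Schwartz-tail variant --- this is exactly why the paper formulates Lemma \ref{convol} for compactly supported local means $\Psi_j$ --- but neither change affects the validity or the exponents of the argument.
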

\begin{proof}	
First we prove the inequality (\ref{ch1}) for the case $\theta<\infty$. For $\theta=\infty$ the proof is similar.
We denote $f_j:=\sum \limits_{k \in \zz} \lambda_{j,k} s_{j,k}$ for $j \in \N_{-1}$. Then
\begin{equation}\label{repr}
f=\sum \limits_{l \in \zz} f_{j+l}.
\end{equation}
By using characterization of Besov spaces via local means (Theorem \ref{local-mean})
and $u$-triangle inequality with $u:=\min\{p,\theta,1\}$ we have
\begin{align*}
\|f\|_{B_{p,\theta}^r}&\asymp\Big( \sum\limits_{j \in \N_0} 2^{\theta j r} \|\Psi_j * f\|_p^\theta \Big)^{1/\theta}  =\Big( \sum\limits_{j \in \N_0} 2^{\theta j r} \Big\|\Psi_j * \Big(\sum \limits_{l \in \zz} \sum \limits_{k \in \zz} \lambda_{j+l,k} s_{j+l,k} \Big)\Big\|_p^\theta \Big)^{1/\theta} \\
& \leq \Big( \sum \limits_{l \in \zz} \Big( \sum\limits_{j \in \N_0} 2^{\theta j r} \Big\| \sum \limits_{k \in \zz} \lambda_{j+l,k} (\Psi_j*s_{j+l,k}) \Big\|_p^\theta   \Big)^{u/\theta}\Big)^{1/u}.
\end{align*}
By using inequality (\ref{conv-b-1}) we can proceed for $v=\min\{p,1\}$
\begin{align}
\|f\|_{B_{p,\theta}^r}& \lesssim \Bigg(\sum \limits_{l \in \zz} \Bigg( \sum\limits_{j \in \N_0} 2^{\theta j r} \Big\| \sum \limits_{k \in \zz} \lambda_{j+l,k} 2^{-\alpha|l|} \sum \limits_{n \in \zz} |a_n| \chi_{A_{j+l,k+n}}(\cdot) \Big\|_p^\theta   \Bigg)^{u/\theta} \Bigg)^{1/u}\notag \\
& \leq \Bigg( \sum \limits_{l \in \zz} \Bigg( \sum\limits_{j \in \N_0} 2^{\theta j r} 2^{-\alpha|l| \theta} \Big( \sum \limits_{n \in \zz} |a_n|^v \Big\| \sum \limits_{k \in \zz} \lambda_{j+l,k}  \chi_{A_{j+l,k+n}}(\cdot) \Big\|^v_p\Big)^{\theta/v}  \Bigg)^{u/\theta}\Bigg)^{1/u} .  \label{norm}
\end{align}
Further we consider the following norm $\Big\| \sum \limits_{k \in \zz} \lambda_{j+l,k}  \chi_{A_{j+l,k+n}}(\cdot) \Big\|_p$. For $x \in \re$ since $A_{j+l,n+k}\subset \bigcup\limits_{|i-k|\lesssim 2^{l_+}} I_{j+l_+,i+n}$ we can write
$$
\Big|\sum \limits_{k \in \zz} \lambda_{j+l,k}  \chi_{A_{j+l,k+n}}(x) \Big|^p \leq \Big|\sum \limits_{k \in \zz} |\lambda_{j+l,k}|  \chi_{A_{j+l,k+n}}(x) \Big|^p\leq \Big|\sum \limits_{k \in \zz} |\lambda_{j+l,k}| \sum \limits_{i \in G_{l}(k)} \chi_{I_{j+l_+,i+n}}(x) \Big|^p,
$$
where $G_{l}(k):=\{i: \, |i-k|\lesssim 2^{l_+}\}$ with $|G_{l}(k)|\asymp 2^{l+}$. By changing order of summation and on the viewpoint that segments $I_{j+l_+,i+n}$ do not intersect for different $i$ we have
$$
\Big|\sum \limits_{k \in \zz} \lambda_{j+l,k}  \chi_{A_{j+l,k+n}}(x) \Big|^p \leq \Big|\sum \limits_{i \in \zz} \chi_{I_{j+l_+,i+n}}(x) \sum \limits_{k \in G_l(i)} |\lambda_{j+l,k}|  \Big|^p=\sum \limits_{i \in \zz} \chi_{I_{j+l_+,i+n}}(x) \Big( \sum \limits_{k \in G_l(i)} |\lambda_{j+l,k}| \Big)^p.
$$
From the H\"{o}lder inequality for $p>1$ we obtain
$$
\Big|\sum \limits_{k \in \zz} \lambda_{j+l,k}  \chi_{A_{j+l,k+n}}(x) \Big|^p \lesssim   2^{l_+(p-1)} \sum \limits_{i \in \zz} \chi_{I_{j+l_+,i+n}}(x) \sum \limits_{k \in G_l(i)} |\lambda_{j+l,k}|^p.
$$
For $p<1$ we use the embedding $l_p \hookrightarrow l_1$ to get
$$
\Big|\sum \limits_{k \in \zz} \lambda_{j+l,k}  \chi_{A_{j+l,k+n}}(x) \Big|^p \leq  \sum \limits_{i \in \zz} \chi_{I_{j+l_+,i+n}}(x) \sum \limits_{k \in G_l(i)} |\lambda_{j+l,k}|^p.
$$
By using last inequality we can write for the norm
\begin{align*}
\Big\|\sum \limits_{k \in \zz} \lambda_{j+l,k}  \chi_{A_{j+l,k+n}} \Big\|_p^p &= \int\limits_{\re} \Big|\sum \limits_{k \in \zz} \lambda_{j+l,k}  \chi_{A_{j+l,k+n}}(x) \Big|^p dx \\
& \lesssim  2^{l_+(p-1)_+}  \sum \limits_{i \in \zz} \int\limits_{\re} \chi_{I_{j+l_+,i+n}} (x) dx \sum \limits_{k \in G_l(i)} |\lambda_{j+l,k}|^p\\
&= 2^{l_+(p-1)_+}  \sum \limits_{i \in \zz} \int\limits_{2^{-j-l_+}(i+n)}^{2^{-j-l_+}(i+n+1)} 1 \, dx \sum \limits_{k\in G_l(i)} |\lambda_{j+l,k}|^p\\
&=2^{l_+(p-1)_+} 2^{-j-l_+} \sum \limits_{i \in \zz}\sum \limits_{k\in G_l(i)} |\lambda_{j+l,k}|^p\\
& \asymp 2^{l_+(p-1)_+}2^{-j} \sum \limits_{k \in \zz}|\lambda_{j+l,k}|^p.
\end{align*}

 By using this inequality we can continue estimation of (\ref{norm})
\begin{align*}
\|f\|_{B_{p,\theta}^r}& \lesssim   \Bigg(  \sum \limits_{l \in \zz} \Bigg( \sum\limits_{j \in \N_0} 2^{\theta j r} 2^{-\alpha|l| \theta} \Big( \sum \limits_{n \in \zz} |a_n|^v 2^{vl_+(1-1/p)_+}2^{-jv/p} \Big(\sum \limits_{k \in \zz}|\lambda_{j+l,k}|^p \Big)^{v/p} \Big)^{\theta/v}  \Bigg)^{u/\theta} \Bigg)^{1/u} \\
& = \Big(\sum \limits_{n \in \zz} |a_n|^v \Big)^{1/v} \cdot  \Bigg(  \sum \limits_{l \in \zz} \Bigg( \sum\limits_{j \in \N_0} 2^{\theta j r} 2^{-\alpha|l| \theta} 2^{l_+(1-1/p)_+\theta}2^{-j\theta/p} \Big(\sum \limits_{k \in \zz}|\lambda_{j+l,k}|^p \Big)^{\theta/p} \Bigg)^{u/\theta}\Bigg)^{1/u}.
\end{align*}
From definition of coefficients $a_n$ we conclude that $\sum \limits_{n \in \zz} |a_n|^v < \infty$, so we can proceed as follows
 \begin{align*}
 \|f\|_{B_{p,\theta}^r}& \lesssim   \Bigg( \sum \limits_{l \in \zz} 2^{-\alpha|l|u} 2^{l_+(1-1/p)_+u} \Bigg( \sum\limits_{j \in \N_0} 2^{\theta j (r-1/p)}   \Big(\sum \limits_{k \in \zz}|\lambda_{j+l,k}|^p \Big)^{\theta/p} \Bigg)^{u/\theta}\Bigg)^{1/u}\\
 &=\Bigg(\sum \limits_{l \in \zz} 2^{-\alpha|l|} 2^{l_+(1-1/p)_+} 2^{-l(r-1/p)} \Big( \sum\limits_{j \in \N_0} 2^{\theta (j+l) (r-1/p)}   \Big(\sum \limits_{k \in \zz}|\lambda_{j+l,k}|^p \Big)^{\theta/p} \Big)^{u/\theta}\Bigg)^{1/u}\\
 & \leq \Big( \sum \limits_{l \in \zz} 2^{-\alpha|l|u} 2^{l_+(1-1/p)_+u} 2^{-l(r-1/p)u} \Big)^{1/u} \|\lambda\|_{b_{p,\theta}^r}.
 \end{align*}
Due to the choice of the parameter $\max\{1/p-1,0\}<r<3+1/p$ the series
$$
 \sum \limits_{l \in \zz} 2^{-\alpha|l|u} 2^{l_+(1-1/p)_+u} 2^{-l(r-1/p)u}
$$
 is convergent. Therefore, inequality (\ref{ch1}) holds.

Let us further prove the unconditional convergence of the series (\ref{series}) in the space $B_{p,\theta}^r$ when $\max\{p,\theta\}<\infty$.
 By $\nabla$ we define a set of indices for the basis  $s_{j+l,k}$, i.e
 $$
 \nabla=\{(j,k): j\in \N_{-1}, k \in \zz\}.
 $$
 We consider the set of sequences
 $$
 \Theta=\{\mathcal{A}=(\mathcal{A}_n)_{n \in \N}: \,  \mathcal{A}_n \subset \nabla,|\mathcal{A}_n|=n,  \mathcal{A}_n\subset \mathcal{A}_{n+1}, \bigcup\limits_{n=1}^\infty\mathcal{A}_n=\nabla \}.
 $$
 Each $\mathcal{A} \in  \Theta$ defines some order of summation of the series (\ref{series}).
 By $S_n$ we denote the following partial sum
 $$
 S_n:=\sum \limits_{(j,k) \in \mathcal{A}_n}  \lambda_{j,k} s_{j,k}.
 $$
 According to first part of the proof we have that
  $$
 \|f-S_n\|_{B_{p,\theta}^r} \lesssim \|\lambda_{j,k}|_{(j,k)\in \nabla  \setminus \mathcal{A}_n}\|_{b_{p,\theta}^r}.
 $$
 Since all finite sequences are dense in the space $b_{p,\theta}^r$, $\max\{p,\theta\}<\infty$ we have that if $n$ is large enough
 $$
 \|\lambda_{j,k}|_{(j,k)\in \nabla  \setminus \mathcal{A}_n}\|_{b_{p,\theta}^r} < \varepsilon,
 $$
 what together with arbitrary choice of $\mathcal{A}$ finishes the proof.

Let now $0<p,\theta\leq\infty$ and $\lambda \in b_{p,\theta}^r$. Then by using H\"{o}lder's inequality with respect to index $j$ in the definition of the norm of $b_{p,\theta}^{r-\varepsilon}$ it is easy to show that
$$
\lim \limits_{n\rightarrow \infty}  \|\lambda_{j,k}|_{(j,k)\in \nabla  \setminus \mathcal{A}_n}\|_{b_{p,\theta}^{r-\varepsilon}} =0.
$$
By using inequality (\ref{ch1}) that is already proven we write
$$
\|f-S_n\|_{B_{p,\theta}^{r-\varepsilon}} \lesssim   \|\lambda_{j,k}|_{(j,k)\in \nabla  \setminus\mathcal{A}_n}\|_{b_{p,\theta}^{r-\varepsilon}} \rightarrow 0
$$
which finishes the proof.
\end{proof}

Now we prove the analogue of this proposition for $F$-spaces.

\begin{prop}\label{charactf1}
	Let $0< p,\theta\leq \infty$, $p\neq \infty$,  $\max\{1/\theta-1,1/p-1,0\}<r<3$ and a sequence $\lambda \in f_{p,\theta}^r$. Then the series (\ref{series})
	converges unconditionally in the space $F_{p,\theta}^{r-\varepsilon}$  for every $\varepsilon>0$. If $\theta<\infty$ we have unconditional convergence in the space $F_{p,\theta}^{r}$. Moreover, the following inequality holds
	\begin{equation}\label{ch1-f}
	\|f\|_{F_{p,\theta}^r} \lesssim \|\lambda \|_{f_{p,\theta}^r}.
	\end{equation}
\end{prop}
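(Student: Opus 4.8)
The plan is to run the same scheme as in the proof of Proposition \ref{charact1}, replacing the H\"older estimate adapted to the $\ell_\theta(L_p)$ ordering of the Besov quasi-norm by a maximal-function argument suited to the $L_p(\ell_\theta)$ ordering of the Triebel--Lizorkin quasi-norm. First I would invoke the characterization of $F$-spaces via local means (the $F$-counterpart of Theorem \ref{local-mean}), so that
$$
\|f\|_{F_{p,\theta}^r}\asymp\Big\|\Big(\sum_{j\in\N_0}2^{\theta jr}|\Psi_j*f|^\theta\Big)^{1/\theta}\Big\|_p.
$$
Writing $f_j:=\sum_{k\in\zz}\lambda_{j,k}s_{j,k}$ and decomposing $f=\sum_{l\in\zz}f_{j+l}$ as in (\ref{repr}), I would pull the $l$-summation outside with the $u$-triangle inequality for $u:=\min\{p,\theta,1\}$, reducing matters to estimating, for each fixed $l$, the quantity $\big\|(\sum_j2^{\theta jr}|\sum_k\lambda_{j+l,k}(\Psi_j*s_{j+l,k})|^\theta)^{1/\theta}\big\|_p$.

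Next I would apply the convolution bound (\ref{conv-b-1}), namely $|\Psi_j*s_{j+l,k}|\lesssim 2^{-\alpha|l|}\sum_n|a_n|\chi_{A_{j+l,k+n}}$, to extract the decay $2^{-\alpha|l|}$. The essential new ingredient is to dominate the inner sum pointwise by a maximal function: fixing $\sigma$ with $0<\sigma<\min\{p,\theta\}$ and using the overlap $A_{j+l,n+k}\subset\bigcup_{|i-k|\lesssim 2^{l_+}}I_{j+l_+,i+n}$ already recorded in the Besov proof, one bounds $\sum_k|\lambda_{j+l,k}|\chi_{A_{j+l,k+n}}$ by a constant (with a controlled power of $2^{l_+}$) times $\big(M(\sum_k|\lambda_{j+l,k}|^\sigma\chi_{I_{j+l,k}})\big)^{1/\sigma}$, where $M$ is the Hardy--Littlewood maximal operator. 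Raising to the power $\theta$, summing in $j$, and applying the Fefferman--Stein vector-valued maximal inequality in $L_{p/\sigma}(\ell_{\theta/\sigma})$ — legitimate precisely because $p/\sigma>1$ and $\theta/\sigma>1$ — removes $M$ and returns $\big\|(\sum_j2^{\theta(j+l)r}(\sum_{k}|\lambda_{j+l,k}|\chi_{I_{j+l,k}})^\theta)^{1/\theta}\big\|_p$, which is $\|\lambda\|_{f_{p,\theta}^r}$ up to the $l$-dependent rescaling.

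Collecting the powers of $2$ in $l$ — the gain $2^{-\alpha|l|}$ from (\ref{conv-b-1}) against the loss from the rescaling and from the dilation of the supports $A_{j+l,k+n}$ — produces a geometric series in $l$ that converges exactly under the stated range $\max\{1/\theta-1,1/p-1,0\}<r<3$, the new lower bound $1/\theta-1$ being forced by the requirement $\sigma<\theta$ in the Fefferman--Stein step. This yields (\ref{ch1-f}). The unconditional convergence assertions then follow verbatim from Proposition \ref{charact1}: density of finitely supported sequences in $f_{p,\theta}^r$ for $\theta<\infty$ gives convergence in $F_{p,\theta}^r$, while a H\"older argument in $j$ gives convergence in $F_{p,\theta}^{r-\varepsilon}$ in general. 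The main obstacle is the maximal step: verifying the pointwise domination uniformly in $n$ and $l$ with the correct power of $2^{l_+}$, and choosing $\sigma$ below $\min\{p,\theta\}$ so that the vector-valued maximal inequality applies — this is precisely where the lower restriction on $r$ originates.
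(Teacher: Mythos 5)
Your proposal is correct and is essentially the paper's own proof: the authors likewise combine the local means characterization, the $u$-triangle inequality over the shifted scales $l$, a pointwise domination of the coefficient sums by $\bigl[M\bigl|\sum_k \lambda_{j+l,k}\chi_{j+l,k}\bigr|^\tau\bigr]^{1/\tau}$ with $\tau<\min\{1,p,\theta\}$ (costing the factor $2^{l_+/\tau}$), the vector-valued (Fefferman--Stein) maximal inequality of Lemma \ref{max-ineq}, and the geometric series in $l$ that converges precisely for $\max\{1/\theta-1,1/p-1,0\}<r<3$, with the same density/dominated-convergence argument for unconditionality. The only cosmetic difference is that the paper works from the Peetre-type bound (\ref{conv-b-2}) and cites \cite[Lem.\ 7.1]{Kyr2003} for the maximal domination, whereas you work from (\ref{conv-b-1}) and derive that domination by hand (your shift factor $(1+|n|)^{1/\sigma}$ being absorbed by the exponential decay of the $a_n$) --- the two implementations are interchangeable.
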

\begin{proof}
We use $u$-triangle inequality with $u=\min\{1,p,q\}$, representation (\ref{repr}) and  Theorem \ref{local-mean}
\begin{align*}
\|f\|_{F_{p,\theta}^{r}} &\asymp
\Big\|\Big(\sum\limits_{j \in \N_0} 2^{\theta r j} |\Psi_{j}*f|^\theta   \Big)^{1/\theta}\Big\|_p \\
&= \Big\|\Big(\sum\limits_{j \in \N_0} 2^{\theta r j} \Big|\Psi_{j}*\Big(\sum \limits_{l \in \zz} \sum \limits_{k \in \zz} \lambda_{j+l,k} s_{j+l,k} \Big)\Big|^\theta   \Big)^{1/\theta}\Big\|_p \\
& \leq \Bigg( \sum \limits_{l \in \zz} \Big\|\Big(\sum\limits_{j \in \N_0} 2^{\theta r j} \Big|\Psi_{j}*\Big( \sum \limits_{k \in \zz} \lambda_{j+l,k} s_{j+l,k} \Big)\Big|^\theta   \Big)^{1/\theta}\Big\|_p^u\Bigg)^{1/u}\\
& \leq \Bigg( \sum \limits_{l \in \zz} \Big\|\Big(\sum\limits_{j \in \N_0} 2^{\theta r j} \Big( \sum \limits_{k \in \zz} |\lambda_{j+l,k}| |\Psi_{j}*s_{j+l,k}| \Big)^\theta   \Big)^{1/\theta}\Big\|_p^u\Bigg)^{1/u}.
\end{align*}
By using inequality (\ref{conv-b-2}) we obtain
$$
\|f\|_{F_{p,\theta}^{r}} \lesssim \Bigg(  \sum \limits_{l \in \zz} 2^{-\alpha |l|u} \Big\|\Big(\sum\limits_{j \in \N_0} 2^{\theta r j} \Big( \sum \limits_{k \in \zz} |\lambda_{j+l,k}| (1+2^{\min\{j,j+l\}}|x-x_{j+l,k}|)^{-R} \Big)^\theta   \Big)^{1/\theta}\Big\|_p^u \Bigg)^{1/u}.
$$
From the following property (\cite[Lem. 7.1]{Kyr2003})
$$
\sum\limits_{k \in \zz} |\lambda_{j+l,k}| (1+2^{\min\{j,j+l\}}|x-x_{j+l,k}|)^{-R}\lesssim 2^{l_+/\tau} \Big[M\Big|\sum\limits_{k \in \zz} \lambda_{j+l,k} \chi_{j+l,k}\Big|^\tau \Big]^{1/\tau}(x)
$$
for $0<\tau \leq 1$ and $R>1/\tau$, we get
$$
\|f\|_{F_{p,\theta}^{r}} \lesssim \Bigg(  \sum \limits_{l \in \zz} 2^{-\alpha |l|u} 2^{u l_+/\tau} \Big\|\Big(\sum\limits_{j \in \N_0} 2^{\theta r j}  \Big[M\Big|\sum\limits_{k \in \zz} \lambda_{j+l,k} \chi_{j+l,k}\Big|^\tau \Big]^{\theta/\tau} \Big)^{1/\theta}\Big\|_p^u \Bigg)^{1/u}.
$$
It is obvious that $\Big\|\Big( \sum \limits_l \big[M|f_l|^\tau\big]^{\theta/\tau}\Big)^{1/\theta} \Big\|_p=\Big\|\Big( \sum \limits_l \big[M|f_l|^\tau\big]^{\theta/\tau}\Big)^{\tau/\theta} \Big\|_{p/\tau}^{1/\tau}$.
We assume that $\min\{\theta/\tau,p/\tau\}>1$.
By using the Hardy-Littlewood maximal inequality we have
\begin{align*}
\|f\|_{F_{p,\theta}^{r}} &\lesssim \Bigg( \sum \limits_{l \in \zz} 2^{-\alpha |l|u} 2^{u l_+/\tau} \Big\|\Big(\sum\limits_{j \in \N_0} 2^{\theta r j}  \Big|\sum\limits_{k \in \zz} \lambda_{j+l,k} \chi_{j+l,k}\Big|^{\theta} \Big)^{1/\theta}\Big\|_p^u \Bigg)^{1/u}\\
&=\Bigg( \sum \limits_{l \in \zz} 2^{-\alpha |l|u} 2^{u l_+/\tau}2^{-rlu} \Big\|\Big(\sum\limits_{j \in \N_0} 2^{\theta r (j+l)}  \Big|\sum\limits_{k \in \zz} \lambda_{j+l,k} \chi_{j+l,k}\Big|^{\theta} \Big)^{1/\theta}\Big\|_p^u\Bigg)^{1/u}\\
& \leq \Big(\sum \limits_{l \in \zz} 2^{-\alpha |l|u} 2^{ ul_+/\tau}2^{-rlu}\Big)^{1/u} \|\lambda\|_{f_{p,\theta}^r}
\end{align*}
for $\tau<\min\{1,p,\theta\}$.
If $\max\{1/\theta-1,1/p-1,0\}\leq 1/\tau-1<r<3$ then the series $\sum \limits_{l \in \zz} 2^{-\alpha |l|u} 2^{ ul_+/\tau}2^{-rlu}$ converges.

Now we prove unconditional convergence. We start with the case when $\theta <\infty$. We use notations from Proposition \ref{charact1}.  We know that
$$
\|f-S_n\|_{F_{p,\theta}^r} \lesssim \|\lambda_{j,k}|_{(j,k)\in \nabla  \setminus \mathcal{A}_n}\|_{f_{p,\theta}^r}.
$$
From density of finite sequences in the space $l_\theta$ we have that
$$
\Big(\sum_{(j,k)\in \nabla  \setminus \mathcal{A}_n } 2^{\theta r j} |\lambda_{j,k}|^\theta \chi_{j,k}\Big)^{1/\theta}\rightarrow 0, \text{  if  }  n\rightarrow\infty.
$$
Since for all $n \in \N$
$$
\Big(\sum_{(j,k)\in \nabla  \setminus \mathcal{A}_n } 2^{\theta r j} |\lambda_{j,k}|^\theta \chi_{j,k}\Big)^{1/\theta} \leq \Big(\sum_{(j,k)\in \nabla } 2^{\theta r j} |\lambda_{j,k}|^\theta \chi_{j,k}\Big)^{1/\theta} \in L_p,
$$
then according to Lebesgue dominated convergence theorem we may write that
$$
\|\lambda_{j,k}|_{(j,k)\in \nabla  \setminus \mathcal{A}_n}\|_{f_{p,\theta}^r}\rightarrow 0, \text{  if  }  n\rightarrow\infty.
$$
When $\theta=\infty$ we use similar consideration as in Proposition \ref{charact1}.
\end{proof}

\begin{prop}\label{charact2} \begin{itemize}
		\item [(i)] 	Let $0< p,\theta\leq \infty$, $p>1/4$, $1/p<r<4$ and  $f \in B_{p,\theta}^r$. Then the inequality
		\begin{equation}\label{ch2}
		\|\lambda(f) \|_{b_{p,\theta}^r}\lesssim \|f\|_{B_{p,\theta}^r}
		\end{equation}
		holds.
		\item [(ii)] 	Let $1/4< p,\theta\leq \infty$, $p\neq \infty$, $\max\{1/p,1/\theta\}<r<4$ and  $f \in F_{p,\theta}^r$. Then the inequality
		\begin{equation}\label{chf2}
		\|\lambda(f) \|_{f_{p,\theta}^r}\lesssim \|f\|_{F_{p,\theta}^r}
		\end{equation}
		holds.
	\end{itemize}
\end{prop}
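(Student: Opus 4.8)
These are the ``lower'' halves of the norm equivalences, in which the coefficients must be controlled by the function, so the argument runs opposite to Propositions~\ref{charact1} and \ref{charactf1}: rather than synthesising $f$ from $\lambda$, one analyses a given $f$ by the sampling functionals (\ref{coeflin1}). Two structural observations are decisive. First, by (\ref{coef}) together with $N_4(1)=N_4(3)=\tfrac16$ and $N_4(2)=\tfrac23$, every $\lambda_{j,k}(f)$ (as well as $\lambda_{-1,k}(f)=f(k)$) is a fixed, $(j,k)$-independent finite linear combination of point values of $f$ at dyadic nodes lying in an interval $\tilde I_{j,k}$ around $x_{j,k}:=2^{-j}k$ of radius $\asymp2^{-j}$. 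Consequently $\lambda_{j,k}(f)$ is well defined exactly where $B^r_{p,\theta}$ (resp.\ $F^r_{p,\theta}$) embeds into $C(\re)$, that is for $r>1/p$ (resp.\ $r>\max\{1/p,1/\theta\}$), and this is the source of the lower smoothness thresholds. Second, because it is assembled from fourth order differences---equivalently $\lambda_{j,k}(f)=2^{-2j}\langle f^{(2)},2^{j}\psi_{2;j,k}\rangle$ with $\psi_2$ carrying $2m=4$ vanishing moments---the functional annihilates all polynomials of degree $\le3$.

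The engine of the proof is a scale-by-scale estimate. Using the local means of Theorem~\ref{local-mean} I would write $f=\sum_{\nu\ge0}\Psi_\nu*f$ (convergent pointwise since $r>1/p$), apply $\lambda_{j,k}$ termwise and split at $\nu=j$. On the coarse scales $\nu\le j$ the function $\Psi_\nu*f$ is band limited of type $\asymp2^{\nu}$, so applying Taylor's formula to the fourth difference at step $2^{-j-1}$ and then Bernstein's inequality produces the gain
\[
|\lambda_{j,k}(\Psi_\nu*f)|\ \lesssim\ 2^{-4(j-\nu)}\,\sup_{y\in\tilde I_{j,k}}|(\Psi_\nu*f)(y)| ,
\]
in which the exponent $4$ is precisely the four vanishing moments and will enforce the upper restriction $r<4$. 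On the fine scales $\nu>j$ there is no cancellation to exploit; here I would not pass through a maximal function pointwise but rather estimate the whole block $\sum_k|\lambda_{j,k}(\Psi_\nu*f)|^p$ directly by a Plancherel--Polya / Nikol'skii sampling inequality for band limited functions, which costs only the mild oversampling factor $\lesssim2^{(\nu-j)_+}$ and hence reproduces exactly the embedding threshold.

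For part (i) I would then insert these two regimes into the sequence norm along the same reindexing $\nu=j+l$ and $u$-triangle scheme as in Proposition~\ref{charact1}. For the coarse block ($l\le0$), the slowly varying $\Psi_{j+l}*f$ gives $2^{-j}\sum_k\sup_{\tilde I_{j,k}}|\Psi_{j+l}*f|^p\lesssim\|\Psi_{j+l}*f\|_p^p$, leaving the factor $2^{-4|l|}$; for the fine block ($l>0$) the sampling inequality leaves the factor $2^{l_+}$. Summing the resulting geometric series in $l$, the coarse part converges iff $r<4$ and the fine part iff $r>1/p$, which yields (\ref{ch2}). Part (ii) is the genuinely harder case, since in the $F$-norm the inner $\ell_\theta$-sum and the outer $L_p$-norm cannot be separated. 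There I would keep the estimate pointwise in $x$, dominate the grid contributions by the Hardy--Littlewood maximal operator in the spirit of the inequality \cite[Lem.~7.1]{Kyr2003} already used in Proposition~\ref{charactf1}, and then apply the Fefferman--Stein vector valued maximal inequality; this is admissible precisely once the underlying exponent exceeds $\max\{1/p,1/\theta\}$. Combined with the coarse decay $2^{-4|l|}$ this gives convergence for $\max\{1/p,1/\theta\}<r<4$ and establishes (\ref{chf2}); the standing assumptions $p>1/4$, respectively $p,\theta>1/4$, serve only to keep the admissible $r$-interval nonempty.

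I expect the main obstacle to be the fine-scale part ($\nu>j$), where the four vanishing moments give nothing and one must extract the correct threshold from the interplay between the sampling/maximal inequalities and the exponent $r$; in particular, routing the $F$-space estimate through the vector valued maximal inequality is what pins down the sharp lower bounds $1/p$ and $\max\{1/p,1/\theta\}$, while the uniform upper bound $r<4$ comes solely from the coarse-scale cancellation. This technique parallels the ones used in \cite{Glen2018}, \cite{HMOT2016} and \cite{TV2019}.
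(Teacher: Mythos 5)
Your plan is correct and structurally the same as the paper's proof: the paper also expands $f=\sum_{l}\delta_{j+l}[f]$ into band-limited Littlewood--Paley blocks (\ref{delta-char}), applies $\lambda_{j,k}$ blockwise, splits at $l=0$, extracts the coarse-scale gain $2^{4l}$ ($l<0$) from the fourth-order differences --- your ``Taylor plus Bernstein'' step is exactly the content of Lemma \ref{peetre-ineq} with the factor $\min\{1,|bh|^4\}$, and it is indeed the sole source of $r<4$ --- and sums a geometric series in $l$ after the $u$-triangle inequality. You differ only in the two maximal-function steps, and both variants are legitimate. On fine scales in the $B$-case the paper keeps the pointwise bound $|\Delta^4_{2^{-j-1}}\delta_{j+l}[f](x_{j,k})|\lesssim 2^{la}\,P_{2^{j+l},a}\delta_{j+l}[f](x)$ and invokes the scalar Peetre maximal inequality (Lemma \ref{peetre-ineq1}, $a>1/p$), losing $2^{la}$ with $a$ arbitrarily close to $1/p$; your subsampled Plancherel--Polya bound, costing $2^{l/p}$ per $L_p$-norm, is a parameter-free equivalent landing on the same threshold $r>1/p$, valid in its sup-version for all $0<p\le\infty$. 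In the $F$-case the paper applies the vector-valued Peetre maximal inequality (Lemma \ref{peetre-ineq2}, $a>\max\{1/p,1/\theta\}$) directly to (\ref{f-ineq}), whereas you route through \cite[Lem.~7.1]{Kyr2003} and Fefferman--Stein, which is precisely the paper's own device in the synthesis direction (Proposition \ref{charactf1}) and yields the same range. Three small repairs: take the Fourier-analytic blocks $\delta_\nu[f]$ (or local means with compactly supported Fourier transform, cf.\ Remark \ref{dual-pair}) rather than generic local means, since otherwise neither $f=\sum_\nu\Psi_\nu*f$ nor the band-limitedness you invoke holds; your parenthetical that $\psi_2$ carries four vanishing moments is inaccurate ($\psi_2$ has two --- the four-fold cancellation of $\lambda_{j,k}$ is that of $\Delta^4$, i.e.\ two moments plus two derivatives), though your estimates use only the correct $\Delta^4$ cancellation; and the level $j=-1$ with $\lambda_{-1,k}(f)=f(k)$ needs the separate (routine) treatment the paper gives in (\ref{dif3}).
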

\begin{proof}
We use representation (\ref{frep}) in the following form
\begin{equation}\label{delta-char}
f=\sum \limits_{l \in \zz} \delta_{j+l}[f], \ \ j \in \N_0.
\end{equation}
We give a proof for the case $\theta<\infty$. For $\theta=\infty$ one can obtain the results by using similar technique with trivial modification.

First we prove one additional inequality. We denote $F_{j,l}(x):=\sum\limits_{k \in \zz} \lambda_{j,k}\left( \delta_{j+l}[f] \right) \chi_{j,k}(x)$, $x \in \re$.
For $x \in I_{j,k}$ we  have that
$$
|F_{j,l}(x)| \leq |\lambda_{j,k}\left( \delta_{j+l}[f] \right)|.
$$
Let first $j\geq 0$. Then
$$
|F_{j,l}(x)|\leq \frac{1}{6} \left(\left|\Delta^4_{2^{-j-1}}\delta_{j+l}[f]\left( \dfrac{2k}{2^{j+1}}\right)\right|+4\left|\Delta^4_{2^{-j-1}}\delta_{j+l}[f]\left( \dfrac{2k+1}{2^{j+1}}\right)\right|+\left|\Delta^4_{2^{-j-1}}\delta_{j+l}[f]\left( \dfrac{2k+2}{2^{j+1}}\right)\right| \right).
$$

By using Lemma \ref{peetre-ineq} we get for some bandlimited function $g$ with $\mathcal{F}g\subset [-A2^{j+l},B2^{j+l}]$
$$
|\Delta^4_{2^{-j-1}} g(x_{j,k})|\lesssim \min\{1,2^{4l}\} \max\{1,2^{al}\} P_{2^{l+j},a}g (x_{j,k}).
$$
From this inequality for $l<0$ and $|x-x_{j,k}|\leq 2^{-j}$ we get
\begin{align}
|\Delta^4_{2^{-j-1}} g(x_{j,k})|& \lesssim 2^{4l} P_{2^{l+j},a}g (x_{j,k})  \leq 2^{4l} \sup\limits_{y \in \re} \frac{|g(y)|}{(1+2^{l+j}|y-x|)^a} (1+2^{l+j}|x-x_{j,k}|)^a\notag \\
& \lesssim 2^{4l} \sup\limits_{y \in \re} \frac{|g(y)|}{(1+2^{l+j}|y-x|)^a}=2^{4l} P_{2^{l+j},a}g (x). \label{dif1}
\end{align}

For $l\geq 0$ and $|x-x_{j,k}|\leq 2^{-j}$ by using definition of the 4th order  difference we write
\begin{align*}
|\Delta^4_{2^{-j-1}} g(x_{j,k})|& \lesssim \sup\limits_{|y|\lesssim 2^{-j}} |g(x_{j,k}+y)| \lesssim \sup\limits_{|y|\lesssim 2^{-j}} \frac{|g(x_{j,k}+y)|}{(1+2^j|y|)^a} \leq P_{2^j,a}(x_{j,k}) \\
&= \sup\limits_{y \in \re} \frac{|g(y)|}{(1+2^j|y-x|)^a}\frac{(1+2^j|y-x|)^a}{(1+2^j|y-x_{j,k}|)^a}  \\
&\leq \sup\limits_{y \in \re} \frac{|g(y)|}{(1+2^j|y-x|)^a} \frac{(1+2^j|y-x_{j,k}|)^a(1+2^j|x-x_{j,k}|)^a}{(1+2^j|y-x_{j,k}|)^a} \lesssim P_{2^j,a}(x) .
\end{align*}
Since for $l\geq 0$ we have
\begin{equation}\label{pin}
P_{2^j,a}(x)=\sup\limits_{y \in \re} \frac{|g(y)|}{(1+2^{j+l}|y-x|)^a} \frac{(1+2^{j+l}|y-x|)^a}{(1+2^{j}|y-x|)^a}  \leq 2^{la}P_{2^{l+j},a}g (x),
\end{equation}
then
\begin{equation}\label{dif2}
|\Delta^4_{2^{-j-1}} g(x_{j,k})| \lesssim 2^{la}P_{2^{l+j},a}g (x).
\end{equation}

If $x \in I_{j,k}$ then not only $|x-x_{j,k}|\leq2^{-j}$, but also $|x-x_{j,k+1}|\leq2^{-j}$ and $|x-x_{j+1,2k+1}|\leq 2^{-j}$. Therefore inequalities (\ref{dif1}) and (\ref{dif2}) hold  also for differences $\Delta^4_{2^{-j-1}}g\left( x_{j+1,2k+1}\right)$ and $\Delta^4_{2^{-j-1}}g\left( x_{j,k+1}\right)$.

Let now $j=-1$. In this case $x \in [k-1/2,k+1/2]$ and
$$
|F_{j,l}(x)| \leq |\lambda_{j,k}\left( \delta_{j+l}[f] \right)| \leq | \delta_{j+l}[f](k)|.
$$
Let again $g$ be some bandlimited function. We consider only the case $l\geq 0$ because $g\equiv 0$ for $l<0$ and there is nothing to prove. So, for $l\geq 0$
\begin{align*}
|g(k)| \leq \sup \limits_{|y|\leq 1} |g(x+y)| =	\sup \limits_{|y|\leq 1} \dfrac{|g(x+y)|}{(1+2^j|y|)^a} (1+2^j|y|)^a \lesssim P_{2^j,a}(x).
\end{align*}	
Using (\ref{pin}) we get
 \begin{equation}\label{dif3}
 |g(k)| \lesssim 2^{la}P_{2^{l+j},a}g (x).
 \end{equation}

Since segments $I_{j,k}$ do not intersect for fixed $j$ and different $k$ from (\ref{dif1}), (\ref{dif2}) and (\ref{dif3})  we conclude that for $x \in \re$
\begin{equation}\label{f-ineq}
|F_{j,l}(x)|\lesssim \min\{2^{4l},1\} \max\{2^{al},1\} P_{2^{j+l},a}\delta_{j+l}[f](x).
\end{equation}

 Let us now prove part (i). From definition of the space of sequences $b_{p,\theta}^r$ by using the $u$-triangle inequality with $u=\min\{p,\theta,1\}$ we can write
\begin{align}
\|\lambda(f)\|_{b_{p,\theta}^r}& = 	\Bigg(\sum\limits_{j \in \N_{-1}} 2^{\theta r j} \Big\|\sum\limits_{k \in \zz} \lambda_{j,k}(f) \chi_{j,k} \Big\|_p^\theta \Bigg)^{1/\theta} \notag\\
&=	\Bigg(\sum\limits_{j \in \N_{-1}} 2^{\theta r j} \Big\|\sum\limits_{k \in \zz} \lambda_{j,k}\Big(\sum \limits_{l \in \zz} \delta_{j+l}[f] \Big) \chi_{j,k} \Big\|_p^\theta \Bigg)^{1/\theta} \notag\\
& \leq \Bigg( \sum\limits_{l \in \zz} \Big( \sum\limits_{j\in \N_{-1}} 2^{\theta r j} \Big\|\sum\limits_{k \in \zz} \lambda_{j,k}\left( \delta_{j+l}[f] \right) \chi_{j,k} \Big\|_p^\theta \Big)^{u/\theta}\Bigg)^{1/u}. \label{lamb-norm}
\end{align}

By using Lemma \ref{peetre-ineq1} and inequality (\ref{f-ineq}) we have that for $a>1/p$
$$
\|F_{j,l}\|_p  \lesssim \min\{2^{4l},1\} \max\{2^{al},1\} \, \|P_{2^{j+l},a}\delta_{j+l}[f]\|_p \lesssim \min\{2^{4l},1\} \max\{2^{al},1\} \, \|\delta_{j+l}[f]\|_p.
$$

 We denote $\beta=a$ if $l\geq 0$ and $\beta=4$ if $l<0$. Now we can proceed estimation (\ref{lamb-norm})
\begin{align*}
\|\lambda(f)\|_{b_{p,\theta}^r}
&\leq \Bigg(\sum\limits_{l \in \zz} \Big( \sum\limits_{j\in \N_{-1}} 2^{\theta r j} \|F_{j,l}\|_p^\theta \Big)^{u/\theta}\Bigg)^{1/u}\\
&\lesssim \Bigg(\sum\limits_{l \in \zz} \Big( \sum\limits_{j\in \N_{-1}} 2^{\theta r j} 2^{\beta \theta l} \|\delta_{j+l}[f]\|_p^\theta \Big)^{u/\theta}\Bigg)^{1/u}\\
& = \Bigg(\sum\limits_{l \in \zz} 2^{\beta l u} 2^{-rl u} \Big( \sum\limits_{j\in \N_{-1}} 2^{\theta r (j+l)}  \|\delta_{j+l}[f]\|_p^\theta \Big)^{u/\theta}\Bigg)^{1/u}\\
&\leq\Big(\sum\limits_{l \in \zz} 2^{(\beta-r) l u} \Big)^{1/u}\|f\|_{B_{p,\theta}^r}.
\end{align*}
Now if $r$ satisfies $1/p<a<r<4$ with $p>1/4$ we have that the series $\sum\limits_{l \in \zz} 2^{(\beta-r) l u}$ converges and inequality (\ref{ch2}) holds.

Now we prove part (ii). We use representation (\ref{delta-char}) and the $u$-triangle inequality
\begin{align*}
\|\lambda(f)\|_{f_{p,\theta}^{r}}&=
\Big\| \Big(\sum\limits_{j \in \N_{-1}} 2^{\theta r j} \Big|\sum\limits_{k \in \zz} \lambda_{j,k}(f) \chi_{j,k} \Big|^\theta \Big)^{1/\theta}\Big\|_p\\
&=\Big\| \Big(\sum\limits_{j \in \N_{-1}} 2^{\theta r j} \Big|\sum\limits_{k \in \zz} \lambda_{j,k}\Big(\sum \limits_{l \in \zz} \delta_{j+l}[f]\Big) \chi_{j,k} \Big|^\theta \Big)^{1/\theta}\Big\|_p\\
& \leq \Bigg(\sum \limits_{l \in \zz} \Big\| \Big(\sum\limits_{j \in \N_{-1}} 2^{\theta r j} \Big|\sum\limits_{k \in \zz} \lambda_{j,k}\Big( \delta_{j+l}[f]\Big) \chi_{j,k} \Big|^\theta \Big)^{1/\theta}\Big\|_p^u \Bigg)^{1/u}.
\end{align*}
By using inequality (\ref{f-ineq}) and Lemma \ref{peetre-ineq2} we write for $a>\max\{1/p,1/\theta\}$
\begin{align*}
\|\lambda(f)\|_{f_{p,\theta}^{r}}& \lesssim \Bigg(\sum \limits_{l \in \zz} 2^{\beta l u} \Big\| \Big(\sum\limits_{j \in \N_{-1}} 2^{\theta r j} \big(P_{2^{j+l},a}\delta_{j+l}[f]\big)^\theta \Big)^{1/\theta}\Big\|_p^u\Bigg)^{1/u} \\
& \lesssim \Bigg(\sum \limits_{l \in \zz} 2^{\beta l u} \Big\| \Big(\sum\limits_{j \in \N_{-1}} 2^{\theta r j} \big(\delta_{j+l}[f]\big)^\theta \Big)^{1/\theta}\Big\|_p^u \Bigg)^{1/u}\\
& =\Bigg(\sum \limits_{l \in \zz} 2^{\beta l u}  2^{-lru}\Big\| \Big(\sum\limits_{j \in \N_{-1}} 2^{\theta r (j+l)} \big(\delta_{j+l}[f]\big)^\theta \Big)^{1/\theta}\Big\|_p^u\Bigg)^{1/u}\\
&\leq \Big(\sum \limits_{l \in \zz} 2^{\beta l u}  2^{-lru} \Big)^{1/u}\|f\|_{F_{p,\theta}^r},
\end{align*}
Due to the choice of the parameter $\max\{1/p,1/\theta\}<a<r<4$ with $p,\theta>1/4$ the series $\sum \limits_{l \in \zz} 2^{\beta lu}  2^{-lru}$ is convergent and inequality (\ref{chf2}) holds.
\end{proof}

\textbf{Proof of Theorem \ref{charact3}.} We give a short proof of part (i). Proof of part (ii) may be obtained in similar way by simple replacement of Proposition \ref{charact1} by Proposition \ref{charactf1} and  part (i) of Proposition \ref{charact2} by part \ref{charact2} (ii).

Relation (\ref{norm-equi})	follows from Propositions \ref{charact1} and \ref{charact2} (i).
We prove convergence for the case $\max\{p,\theta\}<\infty$. Since $f \in B_{p,\theta}^r$, according to Proposition \ref{charact2} (i) we have that $\lambda(f) \in b_{p,\theta}^r$. Then Proposition \ref{charact1} implies that the series $\sum \limits_{j \in \N_{-1}} \sum \limits_{k \in \zz} \lambda_{j,k}(f) s_{j,k}$ converges unconditionally in $B_{p,\theta}^r$ to some function $g$. But since $B_{p,\theta}^r \subset C_0(\re)$ because of the choice of the parameter $r>1/p$ then according to Theorem  \ref{conv-c}  we have uniform convergence of the series $\sum \limits_{j \in \N_{-1}} \sum \limits_{k \in \zz} \lambda_{j,k}(f) s_{j,k}$ to $f$. That implies that $f\equiv g$.
$\blacksquare$

\section{Characterization of Besov-Triebel-Lizorkin spaces  via Chui-Wang wavelets}
Further we use the following notations. We denote $\psi_{j,k}:=\psi_{2;j,k}$ for $j \in N_0$ and $\psi_{-1,k}:=N_2(\cdot+1-k)$. Analogically,  $\psi^*_{j,k}:=\psi^*_{2;j,k}$ for $j \in N_0$ and $\psi^*_{-1,k}:=N_2^*(\cdot-k)$. Let $\mu_{j,k}(f):=\langle f, 2^j\psi_{j,k}\rangle$. Then for each $f \in L_2(\re)$ the following expansion holds
\begin{equation}\label{repr-psi}
f=\sum_{j\in \N_{-1}} \sum\limits_{k \in \zz} \mu_{j,k}(f) \psi^*_{j,k},
\end{equation}
where convergence is understood in term of the $L_2$ norm.

The main goal of this section is to prove the following theorem.
\begin{satz}\label{charact3-psi} \begin{itemize}
		\item [(i)]
		Let $0< p,\theta\leq \infty$, $p>1/4$ and $1/p-2<r<\min\{1+1/p,2\}$. Then $f \in B_{p,\theta}^r$ can be represented by the series (\ref{repr-psi}), which convergent unconditionally in the space $B_{p,\theta}^{r-\varepsilon}$. If $\max\{p,\theta\}<\infty$ we have unconditional convergence in the space $B_{p,\theta}^{r}$.  Moreover, the following norms are equivalent
		\begin{equation}\label{norm-equi-psi}
	\|f\|_{B_{p,\theta}^r} \asymp 	\|\mu(f) \|_{b_{p,\theta}^r} .
		\end{equation}
		\item [(ii)] 	Let $1/4< p,\theta\leq \infty$, $p\neq \infty$, and $\max\{1/p-2,1/\theta-2\}<r<1$. Then $f \in F_{p,\theta}^r$ can be represented by the series (\ref{repr-psi}), which convergent unconditionally in the space $F_{p,\theta}^{r-\varepsilon}$. If $\theta<\infty$ we have unconditional convergence in the space $F_{p,\theta}^{r}$.  Moreover, the following norms are equivalent
		\begin{equation}\label{norm-equi-f-psi}
		\|f\|_{F_{p,\theta}^r} \asymp \|\mu(f) \|_{f_{p,\theta}^r}  .
		\end{equation}
	\end{itemize}
\end{satz}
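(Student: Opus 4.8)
The plan is to prove Theorem~\ref{charact3-psi} by the same three-step scheme used for the Faber spline characterization in Theorem~\ref{charact3}, now carried out directly for the dual Chui--Wang system $\{\psi^*_{j,k}\}$ and the inner-product functionals $\mu_{j,k}(f)=\langle f,2^j\psi_{j,k}\rangle$. The conceptual reason that the admissible range $1/p-2<r<\min\{1+1/p,2\}$ is exactly the range of Theorem~\ref{charact3} shifted down by $2$ is the identity $s_{j,k}''=2^{2j}\psi^*_{2;j,k}$, obtained by differentiating \eqref{bas-fun} twice: the Faber spline is the twice-iterated integral of the dual wavelet and hence two orders smoother. Accordingly I would establish two synthesis estimates (analogues of Propositions~\ref{charact1} and \ref{charactf1}) and one analysis estimate (analogue of Proposition~\ref{charact2}), and then assemble them as in the proof of Theorem~\ref{charact3}.

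For the synthesis direction, given $f=\sum_{j,k}\mu_{j,k}\psi^*_{j,k}$ I would use the local means characterization (Theorem~\ref{local-mean}), the splitting $f=\sum_{l}f_{j+l}$ as in \eqref{repr}, and the $u$-triangle inequality, reducing everything to a convolution bound for $|\Psi_j*\psi^*_{j+l,k}|$ that plays the role of \eqref{conv-b-1} and \eqref{conv-b-2}. The required decay in $l$ rests on three properties of the dual wavelet: its exponential localization (which is exactly what Theorem~\ref{dual-wav-repr} provides, since the $a_n$ decay geometrically and $\psi_2$ is compactly supported), the smoothness of $\psi^*_2$ as a linear spline (responsible for the upper restriction $r<\min\{1+1/p,2\}$), and its two vanishing moments inherited from $\psi_2$. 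The bookkeeping with the indicators $\chi_{A_{j+l,k+n}}$, the H\"older estimate for $p>1$, the embedding $\ell_p\hookrightarrow\ell_1$ for $p<1$, and the counting of overlaps is then identical to Proposition~\ref{charact1}, and the geometric series in $l$ converges precisely on the stated range.

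For the analysis direction I would expand $f=\sum_{l}\delta_{j+l}[f]$ via \eqref{delta-char} and estimate $\mu_{j,k}(\delta_{j+l}[f])=\langle\delta_{j+l}[f],2^j\psi_{j,k}\rangle$ through the Peetre maximal functions of Lemmas~\ref{peetre-ineq}--\ref{peetre-ineq2}. The key pointwise bound is the analogue of \eqref{f-ineq}, namely
$$
|F_{j,l}(x)|\lesssim 2^{-2|l|}\max\{2^{al},1\}\,P_{2^{j+l},a}\delta_{j+l}[f](x).
$$
Here the factor $2^{-2|l|}$ replaces the $\min\{2^{4l},1\}$ of the Faber case: for $l<0$ it comes from the two vanishing moments of $\psi_2$ (yielding the upper bound $r<2$), while for $l>0$ it comes from the decay $\widehat{\psi_2}(\xi)=O(|\xi|^{-2})$ of the linear-spline wavelet, which gains an extra $2^{-2l}$ over the crude supremum bound and is exactly what lowers the admissible exponent to $r>1/p-2$. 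Summing the geometric series $\sum_l 2^{-2|l|u}\max\{2^{al},1\}^{u}2^{-rlu}$ with $a$ chosen slightly above $\max\{1/p,1/\theta\}$ gives the range $1/p-2<r<2$; intersecting with the synthesis range $\max\{1/p-3,-2\}<r<1+1/p$ produces the stated interval. The boundary index $j=-1$, where $\psi_{-1,k}=N_2(\cdot+1-k)$ and $\psi^*_{-1,k}=N_2^*(\cdot-k)$, is handled separately using the explicit representation of $N_2^*$ and its exponential decay.

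Finally I would combine the two directions to obtain the norm equivalences \eqref{norm-equi-psi} and \eqref{norm-equi-f-psi}, and deduce the unconditional convergence statements exactly as in Propositions~\ref{charact1} and \ref{charactf1} (density of finitely supported sequences for $\max\{p,\theta\}<\infty$ resp.\ $\theta<\infty$, and H\"older's inequality in $j$ for the loss of $\varepsilon$). The main obstacle I anticipate is proving the convolution/molecule estimates for the \emph{non-compactly supported} dual wavelet, i.e.\ the analogues of \eqref{conv-b-1}, \eqref{conv-b-2} and \eqref{f-ineq}: one must control the exponentially decaying tails of $\psi^*_2$ uniformly in $j,k,l$, and it is precisely the explicit formula \eqref{dualwavcoef} of Theorem~\ref{dual-wav-repr} that makes this quantitative. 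A secondary point requiring care, in contrast to Theorem~\ref{charact3}, is that here $f$ need not be compactly supported, so the representation \eqref{repr-psi} must be justified directly in $S'(\re)$ from the sequence-norm bounds rather than via the $C_0$-convergence of Theorem~\ref{conv-c}.
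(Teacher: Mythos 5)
Your proposal follows essentially the same route as the paper's proof: synthesis via the local-means characterization and the convolution estimates for $\Psi_j*\psi^*_{j+l,k}$ (the paper's Lemma \ref{convol-chw}, with $\alpha=3$ for $l\geq 0$ from the two vanishing moments and $\alpha=1$ for $l<0$ from $\psi_2\in B^1_{\infty,\infty}$, the tails controlled by the exponential decay of the $a_n$ from Theorem \ref{dual-wav-repr}), analysis via a Littlewood--Paley splitting yielding exactly your factor $2^{-2|l|}\max\{2^{al},1\}$ times a Peetre maximal function (Proposition \ref{charact-psi-2}, where $\beta=a-2$ for $l\geq 0$ and $\beta=2$ for $l<0$), and the same assembly through density of finite sequences and the $S'$-duality of Remark \ref{dual-pair}. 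The only cosmetic deviations are that the paper realizes your $O(|\xi|^{-2})$ Fourier-decay gain for $l>0$ in physical space, via the Lipschitz smoothness of $\psi_{j,k}$ together with the localization of $\Lambda_{j+l}*\psi_{j,k}$ near the knots, and estimates the coefficients by pairing $\Psi_{j+l}*f$ against $\Lambda_{j+l}*\psi_{j,k}$ for a dual pair with $\Lambda_j$ compactly supported in time, rather than working with $\delta_{j+l}[f]$ directly.
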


\begin{rem}\label{dual-pair}
	Let $\widetilde{p}:=\max\{p,1\}$. Then for the scalar product
	$$
	\langle f, \psi_2\rangle:=\sum\limits_{j \in \N_0}\langle \Psi_j * f, \Lambda_j*\psi_2 \rangle
	$$
	we have
	\begin{align*}
	|\langle f, \psi_2\rangle| &\leq \sum\limits_{j \in \N_0} \|\Psi_j * f\|_{\widetilde{p}} \|\Lambda_j*\psi_2\|_{\widetilde{p}'} \\
	&\leq \sup \limits_{j \in \N_0} 2^{j(r-(1/p-1)_+)} \|\Psi_j * f\|_{\widetilde{p}} \sum\limits_{j \in \N_0} 2^{j(-r+(1/p-1)_+)} \|\Lambda_j*\psi_2\|_{\widetilde{p}'} \\
	& \leq \|f\|_{B_{\widetilde{p},\infty}^{r-(1/p-1)_+}} \|\psi_2\|_{B_{\widetilde{p}',1}^{-r+(1/p-1)_+}}.
	\end{align*}
	By using Theorem \ref{local-mean} it is easy to show that $\psi_2 \in B_{p,\theta}^r$ if $r<1/p+1$. Therefore, due to the choice of parameter $r$ in Theorem \ref{charact3-psi} we have that $\psi_2\in B_{\widetilde{p}',1}^{-r+(1/p-1)_+}$. If $f \in F_{p,\theta}^r$ we use the embedding  $F_{p,\theta}^r\subset B_{\max\{1,p\},\infty}^{r-(1/p-1)_+}$ to conclude that $\|f\|_{B_{\widetilde{p},\infty}^{r-(1/p-1)_+}}<\infty$. Note, that we can choose $\Psi_j$ and $\Lambda_j$ such that $\Psi_j$ is compactly supported on the Fourier side and $\Lambda_j$ is compactly supported on time domain (see \cite{UU2015}).
\end{rem}

First we prove some auxiliaries statements. Let $\mu=(\mu_{j,k})_{j \in \N_{-1}, k \in \zz}$ be some sequence of real numbers that satisfy certain conditions ($\mu \in b_{p,\theta}^r$ or $\mu \in f_{p,\theta}^r$). We denote
	\begin{equation}\label{series-psi}
f:=\sum_{j\in \N_{-1}} \sum\limits_{k \in \zz} \mu_{j,k} \psi^*_{j,k}.
	\end{equation}
Due to assumptions on $\mu$  we have convergence of the  series (\ref{series-psi}) at least at $S'(\re)$.
	
\begin{prop}\label{charact-psi-1}
	Let $0< p,\theta\leq \infty$, $\max\{1,1/p\}-3<r<1+1/p$ and a sequence $\mu \in b_{p,\theta}^r$. Then a series (\ref{series-psi})
	converges unconditionally in the space $B_{p,\theta}^{r-\varepsilon}$. If $\max\{p,\theta\}<\infty$ we have unconditional convergence in the space $B_{p,\theta}^{r}$. Moreover, the following inequality holds
	\begin{equation}\label{ch1-psi}
	\|f\|_{B_{p,\theta}^r} \lesssim \|\mu\|_{b_{p,\theta}^r}.
	\end{equation}
\end{prop}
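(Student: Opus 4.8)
The plan is to follow the proof of Proposition \ref{charact1} almost verbatim, the only genuinely new ingredient being a convolution (almost-orthogonality) estimate for the dual wavelet in place of inequality (\ref{conv-b-1}). As there, I would first treat $\theta<\infty$ (the case $\theta=\infty$ being analogous), set $f_j:=\sum_{k\in\zz}\mu_{j,k}\psi^*_{j,k}$ for $j\in\N_{-1}$, and write $f=\sum_{l\in\zz}f_{j+l}$. Applying the local means characterization of Theorem \ref{local-mean} together with the $u$-triangle inequality with $u:=\min\{p,\theta,1\}$ reduces the task to bounding $\big\|\sum_{k\in\zz}\mu_{j+l,k}(\Psi_j*\psi^*_{j+l,k})\big\|_p$ with a summable-in-$l$ prefactor.

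The decisive step is the estimate
\[
|\Psi_j*\psi^*_{j+l,k}(x)|\;\lesssim\;2^{-\alpha|l|}\sum_{n\in\zz}|a_n|\,\chi_{A_{j+l,k+n}}(x),
\]
the analogue of (\ref{conv-b-1}). To obtain it I would use Theorem \ref{dual-wav-repr} to write $\psi^*_{j+l,k}=\sum_{n\in\zz}a_n\psi_{2;j+l,k+n}$ with exponentially decaying $a_n$, and then estimate each $\Psi_j*\psi_{2;j+l,k+n}$ using the compact support of the primal wavelet $\psi_2$. The size of $\alpha$ is dictated by $\psi_2$: for $l>0$ one exploits the two vanishing moments of $\psi_2$ against the smooth kernel $\Psi_j$, which gives $\alpha=3$; for $l<0$ one Taylor-expands the coarse atom and uses the cancellation of $\Psi_j$ together with the mere Lipschitz smoothness of $\psi_2$, which gives only $\alpha=1$ (for the coarsest term $\psi^*_{-1,k}=N_2^*(\cdot-k)$ only $l<0$ occurs, and the Lipschitz regularity of $N_2$ again yields $\alpha=1$, so the estimate is consistent). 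This is exactly the reverse of the cubic-spline situation in Proposition \ref{charact1}, where the lifted generator $v$ has smoothness three but no vanishing moment; antidifferentiating $\psi_2$ twice to pass from $\psi^*$ to $s$ trades two orders of cancellation for two orders of smoothness, and this swap of the two exponents is precisely what shifts the admissible window from $(\max\{1,1/p\}-1,\,3+1/p)$ down to $(\max\{1,1/p\}-3,\,1+1/p)$.

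With this estimate in hand the remaining computation is identical to that of Proposition \ref{charact1}. Using $A_{j+l,k+n}\subset\bigcup_{|i-k|\lesssim 2^{l_+}}I_{j+l_+,i+n}$, I would bound $\big|\sum_k\mu_{j+l,k}\chi_{A_{j+l,k+n}}\big|^p$ by a disjoint sum over the finer grid, apply H\"older's inequality when $p>1$ and the embedding $\ell_p\hookrightarrow\ell_1$ when $p\le 1$, and integrate to get $\big\|\sum_k\mu_{j+l,k}\chi_{A_{j+l,k+n}}\big\|_p^p\asymp 2^{l_+(p-1)_+}2^{-j}\sum_k|\mu_{j+l,k}|^p$. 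Pulling out $\sum_n|a_n|^v<\infty$ (finite by the exponential decay from Theorem \ref{dual-wav-repr}), reindexing $j\mapsto j+l$, and collecting factors leaves
\[
\|f\|_{B_{p,\theta}^r}\lesssim\Big(\sum_{l\in\zz}2^{-\alpha|l|u}\,2^{l_+(1-1/p)_+u}\,2^{-l(r-1/p)u}\Big)^{1/u}\|\mu\|_{b_{p,\theta}^r},
\]
where, exactly as in Proposition \ref{charact1}, the exponent is read direction-dependently ($\alpha=3$ for $l>0$, $\alpha=1$ for $l<0$). The geometric series converges precisely when $\max\{1,1/p\}-3<r<1+1/p$, which yields (\ref{ch1-psi}).

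Finally, unconditional convergence follows by the density argument used at the end of Proposition \ref{charact1}: when $\max\{p,\theta\}<\infty$ finite sequences are dense in $b_{p,\theta}^r$, so the tails $\|\mu_{j,k}|_{(j,k)\in\nabla\setminus\mathcal{A}_n}\|_{b_{p,\theta}^r}\to 0$ and (\ref{ch1-psi}) gives convergence in $B_{p,\theta}^r$; for general $p,\theta$ a H\"older inequality in $j$ shows the corresponding tails tend to $0$ in $b_{p,\theta}^{r-\varepsilon}$, giving unconditional convergence in $B_{p,\theta}^{r-\varepsilon}$. The hard part will be the almost-orthogonality estimate, and specifically the bookkeeping that turns the two vanishing moments of $\psi_2$ into decay $2^{-3|l|}$ for $l>0$ while its Lipschitz regularity only produces $2^{-|l|}$ for $l<0$; once that is in place, everything downstream is a transcription of the cubic-spline argument.
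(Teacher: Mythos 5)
Your proposal is correct and follows essentially the same route as the paper, whose proof of Proposition \ref{charact-psi-1} consists precisely of rerunning Proposition \ref{charact1} with the convolution estimate for $\Psi_j*\psi^*_{j+l,k}$ (Lemma \ref{convol-chw}) in place of (\ref{conv-b-1}); your almost-orthogonality exponents match the paper's lemma exactly ($\alpha=3$ for $l\geq 0$ from the two vanishing moments of $\psi_2$ via a second-order Taylor subtraction, $\alpha=1$ for $l<0$ from $\psi_2, N_2\in B^1_{\infty,\infty}$), as does the resulting window $\max\{1,1/p\}-3<r<1+1/p$. Your structural explanation of the swapped exponents relative to the lifted basis $s_{j,k}$, and the density/H\"older arguments for unconditional convergence, are likewise the paper's.
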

\begin{proof}
For the proof we use the same technique as in Proposition \ref{charact1} with 	Lemma \ref{convol-chw} instead of Lemma \ref{convol}. Different values of $\alpha$ for positive and negative $l$ lead to different range of smoothness parameter $r$.
\end{proof}		

\begin{prop}\label{charact-psi-1-f}
	Let $0< p,\theta\leq \infty$, $p\neq \infty$, $\max\{1,1/p,1/\theta\}-3<r<1$ and a sequence $\mu \in f_{p,\theta}^r$. Then a series (\ref{series-psi})
	converges unconditionally in the space $F_{p,\theta}^{r-\varepsilon}$. If $\theta<\infty$ we have unconditional convergence in the space $F_{p,\theta}^{r}$. Moreover, the following inequality holds
	\begin{equation}\label{ch1-psi-f}
	\|f\|_{F_{p,\theta}^r} \lesssim \|\mu\|_{f_{p,\theta}^r}.
	\end{equation}
\end{prop}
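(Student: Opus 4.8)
The plan is to run the proof of Proposition \ref{charactf1} essentially verbatim, replacing the spline convolution estimate (\ref{conv-b-2}) by its Chui--Wang counterpart Lemma \ref{convol-chw} (the same substitution that powers the Besov version in Proposition \ref{charact-psi-1}). First I would apply the local means characterization (Theorem \ref{local-mean}) to write $\|f\|_{F_{p,\theta}^r}\asymp\big\|\big(\sum_{j\in\N_0}2^{\theta r j}|\Psi_j*f|^\theta\big)^{1/\theta}\big\|_p$, insert the expansion $f=\sum_{l\in\zz}\sum_{k\in\zz}\mu_{j+l,k}\psi^*_{j+l,k}$ in the form (\ref{repr}), and pull the sum over the scale offset $l$ outside by the $u$-triangle inequality with $u=\min\{1,p,\theta\}$. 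This reduces matters to controlling, for each fixed $l$, the inner convolution $\Psi_j*\psi^*_{j+l,k}$.

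The decisive step is Lemma \ref{convol-chw}: it bounds $|\Psi_j*\psi^*_{j+l,k}(x)|$ by a constant times $2^{-\alpha|l|}(1+2^{\min\{j,j+l\}}|x-x_{j+l,k}|)^{-R}$, but---and this is the point already stressed in the proof of Proposition \ref{charact-psi-1}---with two different exponents $\alpha$ for $l\geq 0$ and for $l<0$, reflecting the asymmetric smoothness and vanishing-moment budget of the non-integrated dual wavelet $\psi^*$. After this substitution I would invoke the maximal-function bound \cite[Lem.~7.1]{Kyr2003} to replace $\sum_k|\mu_{j+l,k}|(1+2^{\min\{j,j+l\}}|x-x_{j+l,k}|)^{-R}$ by $2^{l_+/\tau}[M|\sum_k\mu_{j+l,k}\chi_{j+l,k}|^\tau]^{1/\tau}$ for suitable $0<\tau\leq 1$ and $R>1/\tau$, and then apply the vector-valued Hardy--Littlewood maximal inequality (Fefferman--Stein) with $\tau<\min\{1,p,\theta\}$ to remove the maximal operator.

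Re-indexing $j\mapsto j+l$ then extracts exactly $\|\mu\|_{f_{p,\theta}^r}$, leaving the scalar geometric series $\sum_{l\in\zz}2^{-\alpha|l|u}2^{ul_+/\tau}2^{-rlu}$, whose convergence dictates the admissible range of $r$. The two tails split as usual: the $l\to-\infty$ tail forces the upper bound (here $r<1$), while the $l\to+\infty$ tail, carrying the extra maximal-function loss $2^{l_+/\tau}$, forces the lower bound $\max\{1,1/p,1/\theta\}-3<r$. I expect the main obstacle to sit precisely in Lemma \ref{convol-chw}: establishing the correct asymmetric decay rates for the unintegrated $\psi^*$, which lie two orders below those available for the twice-integrated Faber spline $s$ used in Proposition \ref{charactf1}, and thereby shift the whole $r$-window down by $2$ relative to the range $\max\{1,1/p,1/\theta\}-1<r<3$ obtained there.

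Finally, the unconditional convergence statements are obtained exactly as in Proposition \ref{charactf1}: for $\theta<\infty$ one combines the inequality (\ref{ch1-psi-f}) just proven, applied to the tail sequence $\mu_{j,k}|_{(j,k)\in\nabla\setminus\mathcal{A}_n}$, with the density of finite sequences in $\ell_\theta$ and Lebesgue's dominated convergence theorem; for $\theta=\infty$, and for the convergence in $F_{p,\theta}^{r-\varepsilon}$, one repeats the H\"older-in-$j$ argument used for the Besov case in Proposition \ref{charact1}.
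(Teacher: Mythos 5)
Your proposal is correct and follows exactly the paper's own route: the paper proves this proposition by repeating the argument of Proposition \ref{charactf1} with Lemma \ref{convol-chw} in place of Lemma \ref{convol}, noting that the swapped exponents ($\alpha=3$ for $l\geq 0$, $\alpha=1$ for $l<0$) shift the admissible window to $\max\{1,1/p,1/\theta\}-3<r<1$. Your tail analysis of the geometric series $\sum_{l\in\zz}2^{-\alpha|l|u}2^{ul_+/\tau}2^{-rlu}$, including which tail forces which bound and the treatment of unconditional convergence, matches the paper's reasoning precisely.
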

\begin{proof}
	For the proof we use the same technique as in Proposition \ref{charactf1} with 	Lemma \ref{convol-chw} instead of Lemma \ref{convol}. Different values of $\alpha$ for positive and negative $l$ lead to different range of smoothness parameter $r$.
\end{proof}

\begin{prop}\label{charact-psi-2} \begin{itemize}
		\item [(i)] 	Let $0< p,\theta\leq \infty$, $p>1/4$, $1/p-2<r<2$ and  $f \in B_{p,\theta}^r$. Then the inequality
		\begin{equation}\label{ch-psi-2}
		\|\langle f, 2^j\psi_{j,k}\rangle \|_{b_{p,\theta}^r}\lesssim \|f\|_{B_{p,\theta}^r}
		\end{equation}
		holds.
		\item [(ii)] 	Let $1/4< p,\theta\leq \infty$, $p\neq \infty$, $\max\{1/p-2,1/\theta-2\}<r<2$ and  $f \in F_{p,\theta}^r$. Then the inequality
		\begin{equation}\label{ch-psi-2-f}
		\|\langle f, 2^j\psi_{j,k}\rangle \|_{f_{p,\theta}^r}\lesssim \|f\|_{F_{p,\theta}^r}
		\end{equation}
		holds.	
	\end{itemize}

\end{prop}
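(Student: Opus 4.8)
The plan is to run the argument of Proposition \ref{charact2} almost verbatim, with the fourth-order difference structure replaced by the compactly supported primal wavelet $\psi_2$ (support $[0,3]$, two vanishing moments). As in (\ref{delta-char}) I would start from $f=\sum_{l\in\zz}\delta_{j+l}[f]$ and set $F_{j,l}(x):=\sum_{k\in\zz}\mu_{j,k}(\delta_{j+l}[f])\,\chi_{j,k}(x)$, where $\mu_{j,k}(g)=\langle g,2^j\psi_{j,k}\rangle$. The whole statement then reduces to a single pointwise estimate for $F_{j,l}$ of the same shape as (\ref{f-ineq}), after which the $u$-triangle inequality with $u=\min\{p,\theta,1\}$ and the Peetre-maximal Lemmas \ref{peetre-ineq1} and \ref{peetre-ineq2} close the proof exactly as before.

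The core step is to show that for $x\in I_{j,k}$
$$
|F_{j,l}(x)|\lesssim 2^{-2|l|}\max\{2^{al},1\}\,P_{2^{j+l},a}\delta_{j+l}[f](x),
$$
using two different mechanisms in the two frequency regimes. For $l\le 0$ I would use the two vanishing moments of $\psi_2$: subtracting the degree-one Taylor polynomial of $\delta_{j+l}[f]$ at $x_{j,k}$ kills the pairing against the polynomial, and Bernstein's inequality $\|(\delta_{j+l}[f])''\|_\infty\lesssim 2^{2(j+l)}\|\delta_{j+l}[f]\|_\infty$ together with $|x-x_{j,k}|\lesssim 2^{-j}$ on $\supp 2^j\psi_{j,k}$ gives the gain $2^{2l}$, while $\max\{2^{al},1\}=1$ here. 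For $l>0$ I would instead exploit that $\psi_2$ is a linear spline, so $\psi_2''$ is a finite sum of Dirac masses: integrating by parts twice moves a second antiderivative $g_{-2}$ onto $\delta_{j+l}[f]$, with $\|g_{-2}\|_\infty\lesssim 2^{-2(j+l)}\|\delta_{j+l}[f]\|_\infty$ for the band-limited block, and pairing against the point masses $2^{3j}\psi_2''(2^j\cdot-k)$ yields the gain $2^{-2l}$; the Peetre localization then costs $\max\{2^{al},1\}=2^{al}$. The passage from the local supremum to $P_{2^{j+l},a}\delta_{j+l}[f](x)$ is carried out exactly as in (\ref{dif1})--(\ref{dif2}).

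With this estimate I would repeat the final summation from Proposition \ref{charact2}, writing $\beta=2$ for $l\le 0$ and $\beta=a-2$ for $l>0$, so that everything reduces to the convergence of $\sum_{l\in\zz}2^{(\beta-r)lu}$. The tail $l\to-\infty$ converges iff $r<2$ and the tail $l\to+\infty$ iff $r>a-2$; letting $a\downarrow 1/p$ via Lemma \ref{peetre-ineq1} for the $B$-case and $a\downarrow\max\{1/p,1/\theta\}$ via Lemma \ref{peetre-ineq2} for the $F$-case produces precisely the ranges $1/p-2<r<2$ and $\max\{1/p-2,1/\theta-2\}<r<2$. The only separate bookkeeping is the coarse term $j=-1$, where $\psi_{-1,k}=N_2(\cdot+1-k)$ has no vanishing moment; but there only the indices $l\ge 1$ contribute (since $\delta_m[f]=0$ for $m<0$), so only the high-frequency mechanism is needed, and it applies because $N_2$ is again a linear spline whose second derivative is a sum of Dirac masses.

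I expect the main obstacle to be the high-frequency estimate for $l>0$: obtaining the full gain $2^{-2l}$ (rather than the $2^{-l}$ coming from a single integration by parts) requires carefully justifying the two integrations by parts against the distributional second derivative $\psi_2''$ and controlling the resulting point evaluations of the second antiderivative by the Peetre maximal function. This is exactly the step that pushes the admissible smoothness down to $r>1/p-2$, well below the $r>1/p$ barrier of the Faber-spline coefficients, and it is where the smoothness of $\psi_2$ (as opposed to mere integrability) is genuinely used.
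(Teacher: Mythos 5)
Your argument is correct in substance and lands on exactly the paper's numerology, but the execution differs from the paper's proof in ways worth recording. The paper does \emph{not} decompose $f$ into blocks $\delta_{j+l}[f]$ as in Proposition \ref{charact2}; instead it expands the pairing through a dual pair of local means, $|\langle f,2^j\psi_{j,k}\rangle|\le\sum_{l}\big|2^j\langle \Psi_{j+l}\ast f,\Lambda_{j+l}\ast\psi_{j,k}\rangle\big|$, and puts all the work into the kernel-side convolution $\Lambda_{j+l}\ast\psi_{j,k}$: for $l\ge 0$ it gains one factor $2^{-l}$ from $\psi_{j,k}\in B^{1}_{\infty,\infty}$ and a second factor $2^{-l}$ from the fact that this convolution lives on at most seven intervals of length $2^{-j-l}$ around the knots of the spline (measure $\sim 2^{-j-l}$ instead of $2^{-j}$), while for $l<0$ it gains $2^{3l}$ from the two vanishing moments of $\psi_{j,k}$ and arrives at $2^{2l}$ after integrating over $A_{j+l,k}$. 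Your version puts the antiderivative on the band-limited block rather than on the analysis kernel: the double integration by parts against the Dirac masses of $\psi_2''$ together with the bound on the second antiderivative is the mirror image of the paper's amplitude-times-measure bookkeeping, and your Taylor-plus-Bernstein estimate for $l\le 0$ is the mirror image of the paper's moment estimate on $\Lambda_{j+l}\ast\psi_{j,k}$. Both routes give $\beta=2$ for $l<0$ and $\beta=a-2$ for $l\ge 0$ and close with the identical maximal-function summation via Lemmas \ref{peetre-ineq1} and \ref{peetre-ineq2}, so the parameter ranges agree. Your variant has the advantage of reusing the template of Proposition \ref{charact2} verbatim and making the origin of each power of $2^{l}$ transparent; the paper's dual-pair formulation has the side benefit that the pairing $\langle f,2^j\psi_{j,k}\rangle$ is simultaneously given a meaning for distributional $f$ (cf.\ Remark \ref{dual-pair}), a point you pass over silently. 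One detail you must make explicit: for the $F$-case (and the $B$-case with $p<\infty$) you need the \emph{pointwise} bound $|g_{-2}(y)|\lesssim 2^{-2(j+l)}P_{2^{j+l},a}\delta_{j+l}[f](y)$, not merely the $\|\cdot\|_\infty$ bound you state; it follows by writing $g_{-2}=\delta_{j+l}[f]\ast\rho_{j+l}$ with $\rho_{j+l}=\mathcal{F}^{-1}\big[(i\xi)^{-2}\chi(2^{-j-l}\xi)\big]$ for a smooth annulus cutoff $\chi$ and using the rapid decay of $\rho_{j+l}$.

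One term genuinely escapes your high-frequency mechanism as stated: for $j=-1$ and $l=1$ the block is $\delta_0[f]$, whose spectrum is a ball containing the origin, so $(i\xi)^{-2}$ cannot be applied and no second antiderivative with the claimed bound exists. For every other $l>0$ your trick works precisely because $j+l\ge 1$ forces annulus spectrum. The slip is harmless --- for this single value of $l$ no gain in $l$ is needed, and $|\langle\delta_0[f],2^{-1}N_2(\cdot+1-k)\rangle|\lesssim P_{1,a}\delta_0[f](x)$ for $x\in I_{-1,k}$ follows directly from the compact support of $N_2$ --- but your assertion that the high-frequency mechanism ``applies'' for all $l\ge 1$ at $j=-1$ is false at $l=1$ and should be patched by this trivial direct estimate.
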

\begin{proof}
First we estimate one coefficient $|\langle f,2^j \psi_{j,k}\rangle|$. By properly chosen Littlewood-Paley building blocks we may write
\begin{align}
|\langle f,2^j \psi_{j,k}\rangle| &\leq
\sum\limits_{l \in \zz} \Big|
2^j \langle \Psi_{j+l}\ast f, \Lambda_{j+l}\ast \psi_{j,k} \rangle\Big| \notag\\
&= \sum\limits_{l \in\zz}
\Big|\int_{-\infty}^{\infty}
2^j(\Psi_{j+l}\ast f)(y)\cdot (\Lambda_{j+l}\ast \psi_{j,k})(y)dy\Big|.\label{f5}
\end{align}	
We estimate the inner integral. For $l\geq0$ we have that
$$
|(\Lambda_{j+l}\ast \psi_{j,k})(y)| \lesssim 2^{-l} \chi_{A_{j+l,k}}(y).
$$	
The set $A_{j+l,k}$ here is the union of at most 7 intervals centered at nodes of $\psi_{j,k}$ with lengths $2^{-j-l}$ and factor $2^{-l}$ is due to the smoothness of $\psi_{j,k}$ ($\psi_{j,k} \in B_{\infty,\infty}^1$).
By using this we have
\begin{align*}
\Big|\int_{-\infty}^{\infty}
2^j(\Psi_{j+l}\ast f)(y)\cdot (\Lambda_{j+l}\ast \psi_{j,k})(y)dy\Big|& \lesssim  2^j 2^{-l} \int_{-\infty}^{\infty} |(\Psi_{j+l}\ast f)(y)| \chi_{A_{j+l,k}}(y) dy\\
&=2^j 2^{-l} \int\limits_{A_{j+l,k}} |(\Psi_{j+l}\ast f)(y)| dy\\
&\lesssim 2^j 2^{-l} 2^{-j-l} \sup\limits_{y \in A_{j+l,k}} |(\Psi_{j+l}\ast f)(y)| \\
&= 2^{-2l}  \sup\limits_{y \in A_{j+l,k}} \frac{|(\Psi_{j+l}\ast f)(y)|(1+2^{j+l}|x-y|)^a}{(1+2^{j+l}|x-y|)^a}.
\end{align*}
For $x\in I_{j,k}$ and $y \in A_{j+l,k}$ we have that $|x-y|<2^{-j}$. Therefore
\begin{equation}\label{lpos}
\Big|\int_{-\infty}^{\infty}
2^j(\Psi_{j+l}\ast f)(y)\cdot (\Lambda_{j+l}\ast \psi_{j,k})(y)dy\Big|\lesssim 2^{l(a-2)} P_{2^{j+l},a}(\Psi_{j+l}\ast f)(x).
\end{equation}
For $l<0$
$$
|(\Lambda_{j+l}\ast \psi_{j,k})(y)| \lesssim 2^{3l} \chi_{A_{j+l,k}}(y),
$$
where $A_{j+l,k}$ is a segment centered at $x_{j,k}$ with length $\sim 2^{-j-l}$. Factor $2^{3l}$ is due to vanishing moments of $\psi_{j,k}$.
Therefore, as above
\begin{align}
\Big|\int_{-\infty}^{\infty}
2^j(\Psi_{j+l}\ast f)(y)\cdot (\Lambda_{j+l}\ast \psi_{j,k})(y)dy\Big|& \lesssim  2^j 2^{3l} \int\limits_{A_{j+l,k}} |(\Psi_{j+l}\ast f)(y)| dy \notag\\
&\lesssim 2^j 2^{3l} 2^{-j-l} \sup\limits_{y \in A_{j+l,k}} |(\Psi_{j+l}\ast f)(y)| \notag\\
&= 2^{2l}  \sup\limits_{y \in A_{j+l,k}} \frac{|(\Psi_{j+l}\ast f)(y)|(1+2^{j+l}|x-y|)^a}{(1+2^{j+l}|x-y|)^a} \notag\\
&\lesssim  2^{2l} P_{2^{j+l},a}(\Psi_{j+l}\ast f)(x). \label{lneg}
\end{align}
From (\ref{f5}), (\ref{lpos}) and (\ref{lneg}) we conclude
$$
|\langle f,2^j \psi_{j,k}\rangle| \lesssim \sum\limits_{l \in \zz} 2^{\beta l} P_{2^{j+l},a}(\Psi_{j+l}\ast f)(x),
$$	
where $x \in I_{j,k}$ and $\beta=a-2$ if $l\geq 0$ and $\beta=2$ if $l<0$.
Then for $x \in \re$ we have
\begin{equation}\label{ineq-coef}
\sum_{k \in \zz} |\langle f,2^j \psi_{j,k}\rangle| \chi_{I_{j,k}}(x) \lesssim \sum\limits_{l \in \zz} 2^{\beta l} P_{2^{j+l},a}(\Psi_{j+l}\ast f)(x).
\end{equation}
 Further we prove part (i).  By using $u$-triangle inequality with $u=\min\{1,p,\theta\}$ we may write
  \begin{align*}
   \|\langle f,2^j \psi_{j,k}\rangle\|_{b_{p,\theta}^{r}}&= \Big(\sum\limits_{j \in \N_{-1}} 2^{\theta r j} \Big\|\sum\limits_{k \in \zz} |\langle f,2^j \psi_{j,k}\rangle| \chi_{j,k} \Big\|_p^\theta \Big)^{1/\theta} \\
   &\lesssim   \Big(\sum\limits_{j \in \N_{-1}} 2^{\theta r j} \Big\|\sum\limits_{l \in \zz} 2^{\beta l} P_{2^{j+l},a}(\Psi_{j+l}\ast f) \Big\|_p^\theta \Big)^{1/\theta} \\
   & \leq \Bigg( \sum\limits_{l \in \zz} 2^{\beta l u} \Big(\sum\limits_{j \in \N_{-1}} 2^{\theta r j} \| P_{2^{j+l},a}(\Psi_{j+l}\ast f)\|_p^\theta  \Big)^{u/\theta} \Bigg)^{1/u}
     \end{align*}
     Using Lemma \ref{peetre-ineq1} we have for  $a>1/p$
  \begin{align*}
     \|\langle f,2^j \psi_{j,k}\rangle\|_{b_{p,\theta}^{r}}
   & \lesssim \Bigg(\sum\limits_{l \in \zz} 2^{\beta l u} 2^{-lru} \Big(\sum\limits_{j \in \N_{-1}} 2^{\theta r (j+l)} \|\Psi_{j+l}\ast f\|_p^\theta  \Big)^{u/\theta} \Bigg)^{1/u}\\
   & \asymp \Bigg( \sum\limits_{l \in \zz} 2^{(\beta-r) l u} \Bigg)^{1/u} \|f\|_{B_{p,\theta}^r}.
  \end{align*}
The series $\sum\limits_{l \in \zz} 2^{(\beta-r) l u} $  is convergent when $1/p-2<a-2<r<2$.

Now we prove (ii). 	We use inequality (\ref{ineq-coef}), $u$-triangle inequality and Lemma \ref{peetre-ineq2}. Then
\begin{align*}
\|\langle f,2^j \psi_{j,k}\rangle\|_{f_{p,\theta}^{r}}&= \Big\| \Big(\sum\limits_{j \in \N_{-1}} 2^{\theta r j} \Big|\sum\limits_{k \in \zz} \langle f,2^j \psi_{j,k}\rangle \chi_{j,k} \Big|^\theta \Big)^{1/\theta}\Big\|_p\\
&\lesssim \Bigg(\sum \limits_{l \in \zz} 2^{\beta l u} 2^{-lru} \Big\|\Big(\sum\limits_{j \in \N_{-1}} 2^{\theta r (j+l)} |\Psi_{j+l}*f|^\theta   \Big)^{1/\theta}\Big\|_p^u \Bigg)^{1/u}\\
&\asymp \Big( \sum \limits_{l \in \zz} 2^{\beta l u} 2^{-lr u} \Big)^{1/u} \|f\|_{F_{p,\theta}^r}
\end{align*}
holds 	with $a>\max\{1/p,1/\theta\}$. On viewpoint of choice of the parameter $r$ the series $\sum \limits_{l \in \zz} 2^{\beta l u} 2^{-lru}$ is convergent.
\end{proof}	

\textbf{Proof of Theorem \ref{charact3-psi}.}
To get (\ref{norm-equi-psi}) we use (\ref{ch1-psi}) and (\ref{ch-psi-2}). The equivalence of norms (\ref{norm-equi-f-psi}) follows from (\ref{ch1-psi-f}) and (\ref{ch-psi-2-f}).  The rest of the proof can be obtained by using the same technique as in Theorem 4.1 (Step 4) \cite{TV2019} and Remark \ref{dual-pair}.
$\blacksquare$

\section{Extension to higher order Faber splines}

In this section we describe the main ideas of extension of results of Sections 2-5 for higher order splines. We start from explicit representation of a dual Chui-Wang wavelets $\psi^*_m$ for $m \in \N$ and $m\geq 3$. In this case we have similar formula
\begin{equation}\label{dual-wav}
\psi_m^*(x)=\sum\limits_{n\in \zz} a_n^{(m)} \psi_m(x-n).
\end{equation}
Further we give a definition of coefficients $a_l^{(m)}$. We consider the following polynomial
\begin{equation}\label{recip-pol}
t^{(m)}(z):=\sum\limits_{n=0}^{4(m-1)} d_n^{(m)}z^n,
\end{equation}
were $d_n^{(m)}=\langle\psi_m(\cdot+n-2(m-1)),\psi_m\rangle$. Since the Chui-Wang wavelet is compactly supported the coefficients  $d_n^{(m)}$ create finite sequence in “increasing symmetric"\ form, i.e.
$$
|d^{(m)}_{2(m-1)}|>|d^{(m)}_{2(m-1)-1}|=|d^{(m)}_{2(m-1)+1}|>|d^{(m)}_{2(m-1)-2}|=|d^{(m)}_{2(m-1)+2}|>...>|d^{(m)}_{0}|=|d^{(m)}_{4(m-1)}|.
$$
The polynomial (\ref{recip-pol}) defined in this way is called self-reciprocal or palindromic. The roots of this polynomial form reciprocal pairs $(\lambda,\frac{1}{\lambda})$ (see, for example, \cite{Harris2012}).

\begin{figure}[h!]
	\begin{minipage}[h]{0.45\linewidth}
		\center{\includegraphics[width=1\linewidth]{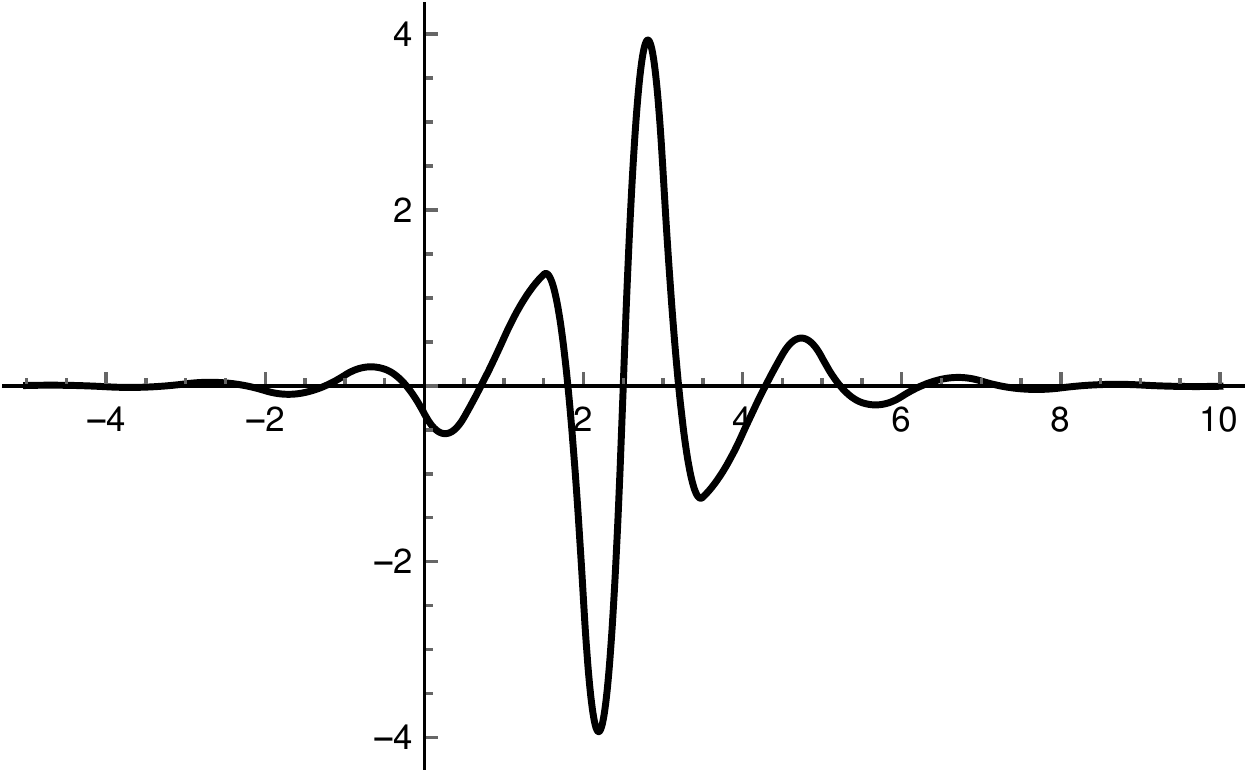}}
	\end{minipage}
	\hfill
	\begin{minipage}[h]{0.45\linewidth}
		\center{\includegraphics[width=1\linewidth]{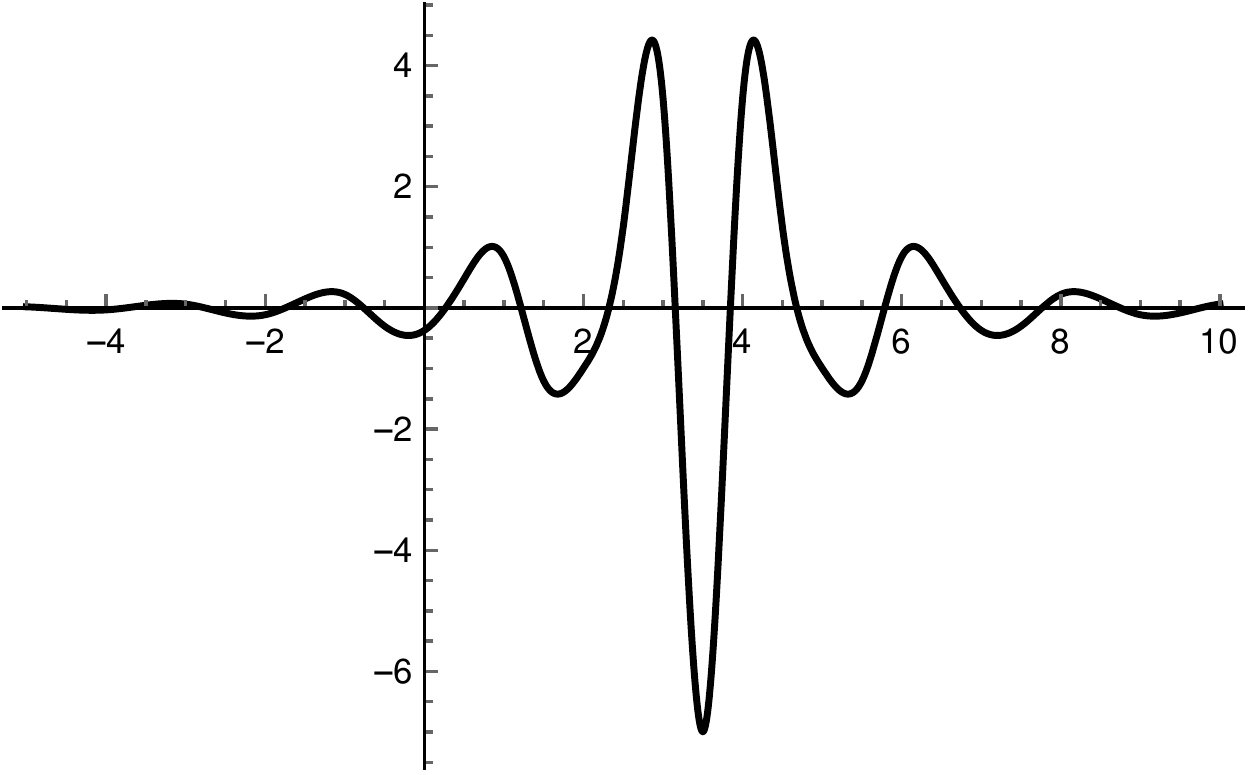}}
	\end{minipage}
	\caption{ Dual wavelets $\psi_3^*$ (left) and $\psi_4^*$ (right).}
	\label{example3-1b}
\end{figure}

 Let $z_0, z_1,..., z_{4(m-1)-1} \in \re$ be the roots of polynomial (\ref{recip-pol}). Let us first show that we never have the case $|z_i|=1$. The system $(\psi_m(\cdot-k))_{k\in \zz}$ is a  Riesz sequence. According to Proposition 2.8 \cite{Woj-book} it means that
\begin{equation}\label{ab}
 A^2\leq \sum\limits_{l\in \zz} |\mathcal{F}\psi_m(\xi +2\pi l)|^2\leq B^2
\end{equation}
 for some $A,B>0$.  From the Poisson summation formula we get
$$
\sum\limits_{l\in \zz} |\mathcal{F}\psi_m(\xi +2\pi l)|^2= \frac{1}{\sqrt{2\pi}} \sum \limits_{n\in \zz} \mathcal{F}\left(\mathcal{F}\psi_m \cdotp \overline{\mathcal{F}\psi_m}\right)(n) \mathrm{e}^{i n \xi}=\frac{1}{\sqrt{2\pi}} \sum \limits_{n\in \zz} \mathcal{F}\left(\mathcal{F}\left(\, \psi_m \ast \psi_m(-\cdot) \,\right)\right)(n) \mathrm{e}^{i n\xi}.
 $$

By using definitions of the Fourier transform and the inverse Fourier transform it is easy to show that $\mathcal{F}\left(\mathcal{F}(g)\right)(t)=\sqrt{2\pi} g(-t)$. Therefore,
 $$
 \sum\limits_{l\in \zz} |\mathcal{F}\psi_m(\xi +2\pi l)|^2= \sum\limits_{n\in \zz} \langle \psi_m,\psi_m(\cdot-n) \rangle  \mathrm{e}^{i n\xi}=t^{(m)}(\mathrm{e}^{i \xi}).
 $$

From the last equality  and (\ref{ab}) we have that the polynomial (\ref{recip-pol}) is bounded away from zero on the unite circle $|z|=1$, therefore it does not have roots that satisfy $|z_i|=1$.

  We rearrange roots in the following way: let $z_0,...,z_{2(m-1)-1}$ be the roots that are located inside of the unite circle, i.e $|z_i|<1$ for $i=0,...,2(m-1)-1$, and $z_{2(m-1)},...,z_{4(m-1)-1}$ be the roots that are located outside of the unit circle, i.e $|z_i|>1$ for $i=2(m-1),...,4(m-1)-1$. Then for $n\leq 2(m-1)-1$
\begin{equation}\label{an-1}
a_n^{(m)}= \frac{(-1)^{m+1}}{|d_0^{(m)}|}\sum\limits_{i=0,...,2(m-1)-1}\frac{1}{z_i^{n-(2(m-1)-1)} \prod\limits_{j=0,...,2(m-1)-1,j\neq i}(z_i-z_j)}
\end{equation}
and	for $n\geq 2(m-1)-1$
\begin{equation}\label{an-2}
a_n^{(m)}=\frac{(-1)^{m}}{|d_0^{(m)}|}\sum\limits_{i=2(m-1),...,4(m-1)-1}\frac{1}{z_i^{n-(2(m-1)-1)} \prod\limits_{j=2(m-1),...,4(m-1)-1,j\neq i}(z_i-z_j)} .
\end{equation}
Since we have the multiplier  $\frac{1}{z_i^{n-(2(m-1)-1)}}$ we get again exponential decay of coefficients $a_n^{(m)}$ with respect to $n$.
Note that since coefficients $a_n^{(m)}$ are defined by
$$
a_n^{(m)}=\frac{1}{2\pi} \int \limits_{0}^{2\pi} \frac{\mathrm{e}^{-inx}}{t^{(m)}(\mathrm{e}^{ix}) \mathrm{e}^{-i(2m-1)x}} \, dx,
$$
i.e they are Fourier coefficients of the function $\frac{1}{t^{(m)}(x)}$ (see Section 2 for details) which is even due to properties of coefficients $d_n^{(m)}$, we have that $a_n^{(m)}$ are real. At Figure 3 we present plot of dual wavelets for cases $m=3$ and $m=4$. The coefficients $a_n^{(3)}$ and $a_n^{(4)}$ are computed numerically.

Below we present the algorithm for computation of these coefficients for each $m \in \N$ and $m\geq 2$ (see Algorithm 1).

\begin{algorithm}[h]
	\caption{Computation of coefficients $a_n^{(m)}$}\label{algo1}
	\begin{tabular}{p{1.2cm}p{5.0cm}p{8.1cm}}
		Input: &$m \in \N$
	\end{tabular}
	\begin{algorithmic}
		\STATE
		1) Compute coefficients $d_n^{(m)}$ for $n=0,...,4(m-1)$ by
		$$
		d_n^{(m)}=\int\limits_{\re} \psi_m(x+n-2(m-1)) \psi_m(x) \, dx;
		$$
         2) Find the roots
        \begin{equation}\label{equa-roots}
         \sum\limits_{n=0}^{4(m-1)} d_n^{(m)}z^n=0;
         \end{equation}
         3) Divide roots $z_0,...,z_{4(m-1)-1}$ for two sets
         \begin{itemize}
         	\item $|z_i|<1$ for $i=0,...,2(m-1)-1$,
         	\item $|z_i|>1$ for $i=2(m-1),...,4(m-1)-1$;
         \end{itemize}
         4) Compute coefficients $a_n^{(m)}$ by (\ref{an-1}) and (\ref{an-2}).
	\end{algorithmic}
\hspace{-8.5cm}	Output:  Coefficients $a_n^{(m)}$.
\end{algorithm}

Analogical, we write for $N_m^*$ that is dual to $N_m$
  \begin{equation}\label{dual-sp}
N_m^*(x)=\sum\limits_{n\in \zz} b_n^{(m)} N_m(x+m/2-n).
\end{equation}

Further we define coefficients $b_l^{(m)}$. Let $z_0, z_1,..., z_{2(m-1)-1} \in \re$ be the roots of the palindromic polynomial of the order $2(m-1)$
$$
\sum\limits_{n=0}^{2(m-1)} g_n^{(m)}z^m,
$$
were $g_n^{(m)}=\langle N_m(\cdot+m/2+n-(m-1)),\psi_m(\cdot+m/2)\rangle$. Let again by $z_0,...,z_{m-2}$ we denote the roots that are located inside of the unite circle and by $z_{m-1},...,z_{2(m-1)-1}$ roots that are located outside of the unit circle. Then
$$
b_n^{(m)}= \frac{1}{|g_0^{(m)}|}\sum\limits_{i=0,...,m-2}\frac{1}{z_i^{n-(m-2)} \prod\limits_{j=0,...,m-2,j\neq i}(z_i-z_j)} , \ \ n\leq m-2
$$
and	
$$
b_n^{(m)}=\frac{1}{|g_0^{(m)}|}\sum\limits_{i=m-1,...,2(m-1)-1}\frac{1}{z_i^{n-(m-2)} \prod\limits_{j=m-1,...,2(m-1)-1,j\neq i}(z_i-z_j)} , \ \ n\geq m-2.
$$

\begin{rem}\label{coef-anm}
Note that a method of computation of coefficients $c_n^{(m)}$ from the definition of the cardinal spline function (\ref{card-sp}) is presented in \cite[\S 4]{Chuibook}. By using similar technique one can probably also compute coefficients $a_n^{(m)}$ for a dual representation of Chui-Wang wavelets (\ref{dual-wav}). On the other hand, coefficients $c_n^{(m)}$ coincide with $b_n^{(m)}$ from the representation for dual B-spline $N_m^*$ (\ref{dual-sp}), more precisely $c_n^{(2m)}=b_n^{(m)}$.
\end{rem}

We denote $\psi_{m;-1,k}:=N_m(\cdot+m/2-k)$ and $\psi^*_{m;-1,k}:=N^*_m(\cdot-k)$ and we can write for $f \in L_2(\re)$
\begin{equation}\label{repr-psi-m}
f=\sum \limits_{j\in \N_{-1}}\sum\limits_{k \in \zz}\mu_{m;j,k}(f) \psi^*_{m;j,k},
	\end{equation}
where $\mu_{m;j,k}(f):=\langle f, 2^j \psi_{m;j,k}\rangle$. Now we formulate the analog of Theorem \ref{charact3-psi} for higher order Chui-Wang wavelets.
\begin{satz}\label{charact3-psi-m} \begin{itemize}
		\item [(i)]
		Let $0< p,\theta\leq \infty$, $m \in \N$, $m\geq 2$, $p>1/(2m)$ and $1/p-m<r<\min\{m-1+1/p,m\}$. Then $f \in B_{p,\theta}^r$ can be represented by the series (\ref{repr-psi-m}), which convergent unconditionally in the space $B_{p,\theta}^{r-\varepsilon}$. If $\max\{p,\theta\}<\infty$ we have unconditional convergence in the space $B_{p,\theta}^{r}$.  Moreover, the following norms are equivalent
	$$
		\|\mu(f) \|_{b_{p,\theta}^r} \asymp \|f\|_{B_{p,\theta}^r}.
	$$
		\item [(ii)] 	Let $m \in \N$, $1/(2m)< p,\theta\leq \infty$, $p\neq \infty$, and $\max\{1/p-m,1/\theta-m\}<r<m-1$. Then $f \in F_{p,\theta}^r$ can be represented by the series (\ref{repr-psi-m}), which convergent unconditionally in the space $F_{p,\theta}^{r-\varepsilon}$. If $\theta<\infty$ we have unconditional convergence in the space $F_{p,\theta}^{r}$.  Moreover, the following norms are equivalent
$$
		\|\mu(f) \|_{f_{p,\theta}^r} \asymp \|f\|_{F_{p,\theta}^r}.
$$
	\end{itemize}
\end{satz}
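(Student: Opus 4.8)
The plan is to transcribe, essentially line for line, the architecture of the proof of Theorem~\ref{charact3-psi}, since the passage from $m=2$ to general $m$ changes only the numerology of two structural constants: the smoothness of the wavelet and its number of vanishing moments. Concretely, I would reduce the statement to a synthesis inequality and an analysis inequality. The synthesis step is the higher order analogue of Propositions~\ref{charact-psi-1} and~\ref{charact-psi-1-f}, asserting $\|f\|_{B_{p,\theta}^r}\lesssim\|\mu\|_{b_{p,\theta}^r}$ and $\|f\|_{F_{p,\theta}^r}\lesssim\|\mu\|_{f_{p,\theta}^r}$ for the series~\eqref{repr-psi-m}; the analysis step is the analogue of Proposition~\ref{charact-psi-2}, giving the reverse bounds for $\mu(f)=(\langle f,2^{j}\psi_{m;j,k}\rangle)$. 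Once both directions hold, the norm equivalences follow at once, and unconditional convergence together with the identification of the limit with $f$ is obtained exactly as in the $m=2$ case, i.e.\ through the duality argument of Remark~\ref{dual-pair} (now using $\psi_m\in B^{s}_{\infty,\infty}$ for $s<m$) and the technique of Theorem~4.1 (Step~4) in~\cite{TV2019}.

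For the synthesis direction the only ingredient that must be replaced is the convolution estimate Lemma~\ref{convol-chw}. Here I would exploit that $\psi_m^{*}$ is, by~\eqref{dual-wav}, the superposition $\sum_{n}a_n^{(m)}\psi_m(\cdot-n)$ of translates of the \emph{compactly supported} wavelet $\psi_m$, whose coefficients decay exponentially by~\eqref{an-1}--\eqref{an-2}; the starting term at level $j=-1$ is treated likewise via $N_m^{*}$ and~\eqref{dual-sp}, whose coefficients $b_n^{(m)}$ also decay exponentially. Since $\psi_m$ is a spline of degree $m-1$ it has smoothness $m-1$ and, lying in $W_0\perp V_0$, it carries $m$ vanishing moments; the dual $\psi_m^{*}\in W_0$ inherits both properties. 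Consequently $\Psi_j*\psi_{m;j+l,k}^{*}$ gains a factor $2^{-(m-1)l}$ on the fine scales $l\ge 0$ (from smoothness) and a factor controlled by the $m$ vanishing moments on the coarse scales $l<0$, up to the localisation factor $\sum_{n}|a_n^{(m)}|\chi_{A_{j+l,k+n}}$. With this estimate the computations of Propositions~\ref{charact-psi-1} and~\ref{charact-psi-1-f} carry over verbatim, and tracking the two different values of the decay exponent $\alpha$ through the geometric series in $l$ produces precisely the upper restrictions $r<m-1+1/p$ for $B$-spaces and $r<m-1$ for $F$-spaces.

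For the analysis direction I would estimate a single coefficient $|\langle f,2^{j}\psi_{m;j,k}\rangle|$ as in Proposition~\ref{charact-psi-2}, splitting according to the sign of $l$ after inserting Littlewood--Paley building blocks. For $l\ge 0$ the smoothness $\psi_m\in B^{m-1}_{\infty,\infty}$ gives, after the $L_1$-normalisation and the Peetre factor, the exponent $\beta=a-m$, while for $l<0$ the $m$ vanishing moments give $\beta=m$. Applying the Peetre maximal inequalities (Lemma~\ref{peetre-ineq1} for the $b$-spaces with $a>1/p$, and Lemma~\ref{peetre-ineq2} for the $f$-spaces with $a>\max\{1/p,1/\theta\}$) reduces everything to the convergence of $\sum_{l\in\zz}2^{(\beta-r)lu}$ with $u=\min\{1,p,\theta\}$. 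This series converges exactly when $a-m<r<m$, so letting $a$ tend to its admissible threshold yields the lower bound $r>1/p-m$ (resp.\ $r>\max\{1/p-m,1/\theta-m\}$) and the upper bound $r<m$; the requirement that this window be nonempty forces $1/p-m<m$, i.e.\ $p>1/(2m)$ (resp.\ $p,\theta>1/(2m)$), which is exactly the standing hypothesis.

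I expect the main obstacle to be the rigorous justification of the higher order convolution estimate that replaces Lemma~\ref{convol-chw}. In contrast to the Haar or piecewise linear situation, $\psi_m^{*}$ is not compactly supported, so one must verify that summing the exponentially decaying tails $a_n^{(m)}$ over the translates still reproduces \emph{both} the full smoothness gain $2^{-(m-1)l}$ for fine scales and the full vanishing-moment gain for coarse scales, with constants uniform in $j,k,l$. The delicate point is confirming that $\psi_m^{*}$ indeed possesses exactly $m$ vanishing moments and $m-2$ continuous derivatives (and similarly that $N_m^{*}$ behaves correctly at the starting level $j=-1$); once these localisation and moment properties are secured, the remaining summations are a routine transcription of the $m=2$ arguments with the smoothness order $1$ replaced by $m-1$ and the vanishing-moment count $2$ replaced by $m$.
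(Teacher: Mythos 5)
Your overall architecture is exactly the intended one: the paper gives no separate proof of Theorem \ref{charact3-psi-m}, stating it in Section 6 as the direct analogue of Theorem \ref{charact3-psi} once the representation (\ref{dual-wav}) and the exponential decay of the $a_n^{(m)}$ are available, and your synthesis/analysis splitting, the appeal to Remark \ref{dual-pair} and to \cite{TV2019} for convergence, and your final parameter windows all match. However, there is one concrete error in your synthesis step, namely in the higher order replacement of Lemma \ref{convol-chw}: you have swapped the roles of smoothness and vanishing moments between the two scale regimes. In the convolution $\Psi_j\ast\psi^*_{m;j+l,k}$ the wavelet lives at scale $j+l$, so for $l\geq 0$ it is the wavelet that is \emph{finer}: there one uses the $m$ vanishing moments of $\psi_m$ against the Taylor polynomial of order $m-1$ of the kernel $\Psi_j$, and the remainder of order $m$ gives the gain $2^{-(m+1)l}$ (for $m=2$ this is precisely the paper's $\alpha=3$, obtained by subtracting the Taylor polynomial of order two). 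For $l<0$ the wavelet is coarser, and one uses the vanishing moments of the local mean $\Psi_j$ against $\psi_m\in B^{m-1}_{\infty,\infty}$, giving $2^{(m-1)l}$ (the paper's $\alpha=1$ for $m=2$). You assert instead a gain $2^{-(m-1)l}$ for $l\geq0$ ``from smoothness'' and attribute the coarse scales to the moments of $\psi_m$; this is backwards, and it is not merely a labeling issue: feeding the exponents $\alpha_+=m-1$, $\alpha_-=m+1$ into the geometric series of the analogues of Propositions \ref{charact-psi-1} and \ref{charact-psi-1-f} yields the upper restriction $r<1/p+m+1$ instead of $r<m-1+1/p$, and a spurious lower restriction $r>\max\{1,1/p\}-(m-1)$ that strictly cuts into the claimed range $r>1/p-m$ (already for $m=2$, $p=\theta=2$ your version would require $r>0$, while Theorem \ref{charact3-psi} allows $r>-3/2$). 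So the synthesis step as described would fail to prove the theorem on its stated range, even though the restrictions you announce at the end are the correct ones.

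The repair is local: restore the assignment $\alpha=m+1$ for $l\geq0$ and $\alpha=m-1$ for $l<0$, carried through the sum over the exponentially decaying $a_n^{(m)}$ (and $b_n^{(m)}$ at level $j=-1$) exactly as in Lemma \ref{convol-chw}; then the geometric series give $\max\{1,1/p\}-(m+1)<r<m-1+1/p$ in the $B$-case (respectively $r<m-1$ in the $F$-case), and intersecting with your correctly computed analysis ranges $1/p-m<r<m$ and $\max\{1/p,1/\theta\}-m<r<m$ reproduces the hypotheses of Theorem \ref{charact3-psi-m}. Note that your analysis direction has the assignment right ($\beta=a-m$ for $l\geq0$ from smoothness, $\beta=m$ for $l<0$ from moments, generalizing Proposition \ref{charact-psi-2}), which is presumably the source of the confusion: there the wavelet sits at scale $j$ and the Littlewood--Paley block at scale $j+l$, so the sign convention of $l$ relative to the wavelet is opposite to the one in Lemma \ref{convol-chw}. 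A secondary slip: you invoke $\psi_m\in B^{s}_{\infty,\infty}$ for $s<m$; a spline of degree $m-1$ in $C^{m-2}$ lies only in $B^{m-1}_{\infty,\infty}$ (and in $B^{s}_{p,\theta}$ for $s<m-1+1/p$, which is the bound generalizing Remark \ref{dual-pair}), consistent with your own earlier statement that the smoothness is $m-1$.
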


\begin{figure}[h!]
	\center{\includegraphics[width=0.4\linewidth]{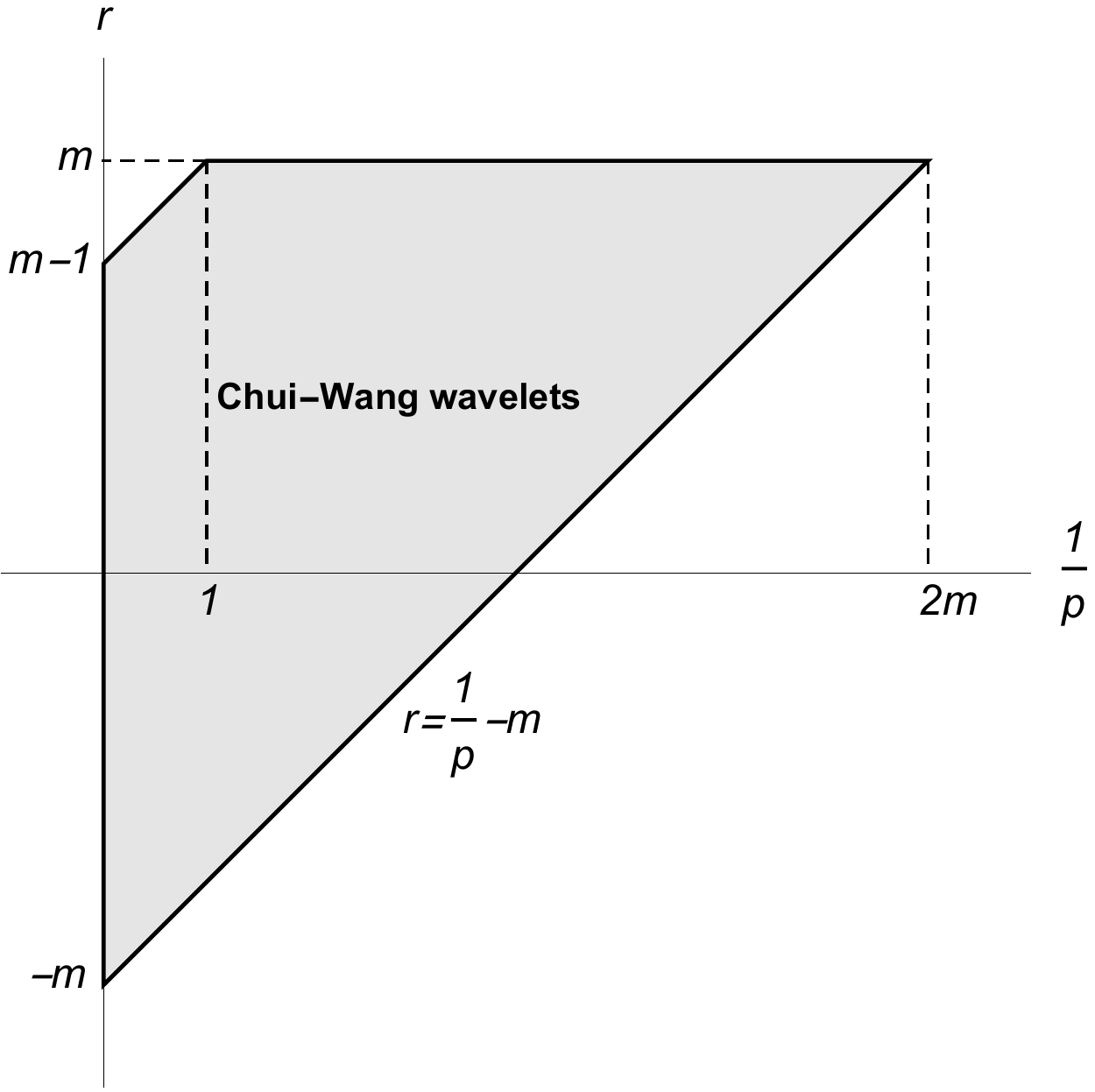}}
	\caption{The range of values of parameter $r$ in Theorem \ref{charact3-psi-m} for B-case.}
	\label{chui-wang-B}
\end{figure}

\begin{rem}
	In Theorem \ref{charact3-psi-m} we consider the situation when $m\geq 2$.  For the Haar basis, i.e. for $m=1$ the corresponding results was obtained in \cite{Tr2010}. See also \cite{Roman2015}--\cite{Roman2016} where the author describes the ``non-tensor'' approach to the multivariate Haar basis.
\end{rem}

By using $m$-th order biortogonal Chui-Wang wavelets and similar technique as in Section 3 we can construct the following $2m$-th order Faber spline basis for $j \in \N_0$:
$$
 s_{2m;j,k}(x):=2^{mj}\int\limits_{-\infty}^x \frac{\psi_{m;j,k}^*(t)}{(m-1)!}(x-t)^{m-1} dt=2^{mj} \sum \limits_{n \in \zz} a_n^{(m)} v_{2m;j,k}
$$
where $v_{2m;j,k}=\int\limits_{-\infty}^x \frac{\psi_{m;j,k}(t)}{(m-1)!}(x-t)^{m-1} dt$ and $s_{2m;-1,k}(x):=L^{2m}(x-k)$. Note that from these formulas for basis functions $s_{2m;j,k}$ we obtain also that they are not compactly supported but due to exponential decay of coefficients $a_n^{(m)}$ "very well localized".

Further we present examples of the construction of Faber splines of 6th order.
\begin{exam}\label{ex-m-3}
For $m=3$ the equation (\ref{equa-roots}) takes the form
$$
1-518z-11072z^2+41734z^3+170110z^4+41734z^5-11072z^6-518z^7+z^8=0.
$$
It has the following roots
\begin{align*}
z_0&= 136+13\sqrt{105}-4\sqrt{2265+221\sqrt{105}},  & z_1=\frac{1}{2}\left(-13-\sqrt{105}+\sqrt{2(135+13\sqrt{105})}\right),\\
z_2&=\dfrac{1}{2}\left(-13+\sqrt{105}+\sqrt{2(135-13\sqrt{105})}\right), & z_3=136-13\sqrt{105}-4\sqrt{2265-221\sqrt{105}}, \\
z_4&= 136+13\sqrt{105}+4\sqrt{2265+221\sqrt{105}},  & z_5=\frac{1}{2}\left(-13-\sqrt{105}-\sqrt{2(135+13\sqrt{105})}\right),\\
z_6&=\dfrac{1}{2}\left(-13+\sqrt{105}-\sqrt{2(135-13\sqrt{105})}\right), &  z_7=136-13\sqrt{105}+4\sqrt{2265-221\sqrt{105}}.
\end{align*}
Then the coefficients $a_n^{(3)}$ are computed numerically by the formulas (\ref{an-1}) and (\ref{an-2})
\begin{align*}
a_0^{(3)}&=12.251,  &a_{\pm 1}^{(3)}&=-3.765,  &a_{\pm 2}^{(3)}&=1.921,  &a_{\pm 3}^{(3)}&=-0.772 ,  &a_{\pm 4}^{(3)}&=0.343,\\
a_{\pm 5}^{(3)}&=-0.145,  &a_{\pm 6}^{(3)}&=6.3\cdot 10^{-2},  &a_{\pm 7}^{(3)}&=-2.7\cdot 10^{-2},  &a_{\pm 8}^{(3)}&=1.1\cdot 10^{-2},  &a_{\pm 9}^{(3)}&=-5.02\cdot 10^{-3},\\
a_{\pm 10}^{(3)}&=2.1\cdot 10^{-3},  &a_{\pm 11}^{(3)}&=-9.3\cdot 10^{-4},  &a_{\pm 12}^{(3)}&=4.01\cdot 10^{-4},  &a_{\pm 13}^{(3)}&=-1.7\cdot 10^{-4},  &a_{\pm 14}^{(3)}&=7.04\cdot 10^{-5},
\end{align*}
and basis functions $s_{6;j,k}$ are defined as follows
$$
s_{6;j,k}(x)= \sum \limits_{n \in \zz} a_n^{(3)}  v_6(2^jx-k-n),
$$
where
$$
v_6(t)=\frac{1}{7200}
\begin{cases}
t^5, & 0\leq t \leq 1/2, \\
1-10t+40t^2-80t^3+80t^4-31t^5, & 1/2<t \leq 1, \\
-236+1175t-2330t^2+2290t^3-1105t^4+206t^5, & 1<t\leq 3/2, \\
6082-19885t+25750t^2-16430t^3+5135t^4-626t^5, & 3/2<t\leq 2, \\
3(-15914+38225t-36270t^2+16950t^3+3895t^4+352t^5), & 2<t\leq 5/2, \\
3(52836-99275t+73730t^2-27050t^3+4905t^4-352t^5), & 5/2<t\leq 3, \\
-250218+383385t-232950t^2+70230t^3-10515t^4+626t^5, &3<t \leq 7/2, \\
186764-240875t+123770t^2-31690t^3+4045t^4-206t^5, & 7/2<t\leq 4, \\
-55924+62485t-27910t^2+6230t^3-695t^4+31t^5, & 4<t\leq 9/2, \\
3125-3125t+1250t^2-250t^3+25t^4-t^5, & 9/2<t\leq 5,\\
0, & \text{otherwise}.
\end{cases}
$$
\end{exam}

Each function $f\in C_0(\re)$ can be expanded in the series
\begin{equation}\label{exp-m}
f=\sum\limits_{j\in\N_{-1}}\sum\limits_{k\in \zz} \lambda_{2m;j,k}(f) s_{2m;j,k},
\end{equation}
in the sense of the uniform norm. The coefficients $\lambda_{2m;j,k}(f)$ are defined as follows $\lambda_{2m;-1,k}(f):=f(k)$ and for $j\in \N_0$
\begin{equation}\label{coef-m}
\lambda_{2m;j,k}(f):=\sum\limits_{l=0}^{2m-2}(-1)^{l}N_{2m}(l+1)\Delta_{2^{-j-1}}^{2m} f\Big( \frac{2k+l}{2^{j+1}}\Big).
\end{equation}

Now we are ready to formulate the theorem about sampling discretization of Besov-Triebel-Lizorkin spaces via higher order Faber splines.
\begin{satz}\label{charact-m} \begin{itemize}
		\item [(i)]
	Let $0< p,\theta\leq \infty$, $m \in \N$, $m\geq 2$, $p>1/(2m)$ and $1/p<r<\min\{2m-1+1/p,2m\}$. Then every compactly supported $f \in B_{p,\theta}^r$ can be represented by the series (\ref{exp-m}), which convergent unconditionally in the space $B_{p,\theta}^{r-\varepsilon}$  for every $\varepsilon>0$. If $\max\{p,\theta\}<\infty$ we have unconditional convergence in the space $B_{p,\theta}^{r}$.  Moreover, the following norms are equivalent
	$$
	\|\lambda(f) \|_{b_{p,\theta}^r} \asymp \|f\|_{B_{p,\theta}^r}.
	$$
		\item [(ii)] 	Let  $1/(2m)< p,\theta\leq \infty$, $p\neq \infty$, and $\max\{1/p,1/\theta\}<r<2m-1$ for $m\in \N$. Then every compactly supported $f \in F_{p,\theta}^r$ can be represented by the series (\ref{exp-m}), which convergent unconditionally in the space $F_{p,\theta}^{r-\varepsilon}$  for every $\varepsilon>0$. If $\theta<\infty$ we have unconditional convergence in the space $F_{p,\theta}^{r}$.  Moreover, the following norms are equivalent
		$$
		\|\lambda(f) \|_{f_{p,\theta}^r} \asymp \|f\|_{F_{p,\theta}^r}.
		$$
		\end{itemize}
\end{satz}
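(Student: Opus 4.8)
The plan is to mirror the architecture used for the cubic case (Theorem \ref{charact3}), replacing every order-$2$ object by its order-$m$ analogue: the basis function $s_{j,k}$ by $s_{2m;j,k}$, the cubic profile $v$ by the degree-$(2m-1)$ profile $v_{2m}$, the fourth-order difference by the $2m$-th order difference appearing in (\ref{coef-m}), and the coefficients $a_n$ by $a_n^{(m)}$. As in the cubic case I would split the equivalence into a \emph{synthesis} estimate $\|f\|_X\lesssim\|\lambda\|_x$ and an \emph{analysis} estimate $\|\lambda(f)\|_x\lesssim\|f\|_X$, for $X=B_{p,\theta}^r$ (resp. $F_{p,\theta}^r$) and $x=b_{p,\theta}^r$ (resp. $f_{p,\theta}^r$), combine them for the norm equivalence, and treat convergence separately. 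Two structural facts drive everything: $s_{2m;j,k}$ is a sum $\sum_n a_n^{(m)}v_{2m}(2^j\cdot-k-n)$ of shifts of a compactly supported degree-$(2m-1)$, $C^{2m-2}$-spline, with $\sum_n|a_n^{(m)}|^v<\infty$ for every $v>0$ by the exponential decay of $a_n^{(m)}$; and the functional $\lambda_{2m;j,k}$ is a fixed finite combination of $2m$-th order differences, which annihilates polynomials of degree $<2m$.

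For the synthesis direction I would first establish the order-$m$ analogues of the convolution kernel bounds (\ref{conv-b-1}) and (\ref{conv-b-2}) for $\Psi_j\ast s_{2m;j+l,k}$, carrying a decay factor $2^{-\alpha|l|}$. For $l<0$ (the Littlewood--Paley block finer than the basis scale) the exponent is governed by the $(2m-1)$-smoothness of $s_{2m;j+l,k}$, which lies in $B_{\infty,\infty}^{2m-1}$; for $l>0$ the decay reflects the frequency mismatch between the band-limited $\Psi_j$ and the fine-scale localized profile $v_{2m}$. Feeding these into the $u$-triangle inequality, local-means and characteristic-function estimates of Propositions \ref{charact1} and \ref{charactf1} reduces the whole bound to convergence of a geometric series in $l$, whose summability in the regime $l<0$ forces the upper endpoints $r<2m-1+1/p$ (Besov) and $r<2m-1$ (Triebel--Lizorkin).

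For the analysis direction I would use the decomposition $f=\sum_{l\in\zz}\delta_{j+l}[f]$ together with the order-$m$ form of the Peetre-difference inequality of Lemma \ref{peetre-ineq},
$$
|\Delta^{2m}_{2^{-j-1}}g(x)|\lesssim\min\{2^{2ml},1\}\,\max\{2^{al},1\}\,P_{2^{j+l},a}g(x),
$$
valid for bandlimited $g$ with spectrum in $[-A2^{j+l},B2^{j+l}]$. Since $\lambda_{2m;j,k}(f)$ is the fixed combination (\ref{coef-m}) of such differences, this yields the order-$m$ replacement of (\ref{f-ineq}) with $\beta=a$ for $l\ge0$ and $\beta=2m$ for $l<0$. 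Running the Peetre-maximal and $u$-triangle estimates exactly as in Proposition \ref{charact2} then gives $\|\lambda(f)\|_x\lesssim\|f\|_X$ provided $\sum_l 2^{(\beta-r)lu}$ converges, i.e. for $1/p<a<r<2m$ (Besov) and $\max\{1/p,1/\theta\}<a<r<2m$ (Triebel--Lizorkin); the lower endpoint $r>1/p$ is exactly the threshold at which the sampling functional is admissible.

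Intersecting the two ranges yields the stated $1/p<r<\min\{2m-1+1/p,2m\}$ and $\max\{1/p,1/\theta\}<r<2m-1$, and combining the directions gives the (quasi-)norm equivalence. Unconditional convergence in $B_{p,\theta}^{r-\varepsilon}$ (resp. in $B_{p,\theta}^{r}$ for $\max\{p,\theta\}<\infty$) then follows from the synthesis estimate and density of finitely supported sequences as in Proposition \ref{charact1}, while the identification of the limit with $f$ uses the embedding $B_{p,\theta}^r\hookrightarrow C_0(\re)$ for $r>1/p$ and the order-$m$ version of the uniform-convergence Theorem \ref{conv-c} (expansion (\ref{exp-m})). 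I expect the main obstacle to be the first step of the synthesis part, namely proving the convolution kernel bounds for $s_{2m;j,k}$ with the sharp decay exponent in both regimes of $l$, arguing uniformly in $m$ from the smoothness of the integrated dual wavelet and the exponential decay of $a_n^{(m)}$, since no closed form for $v_{2m}$ is available for general $m$. A convenient cross-check, and a cleaner alternative route, is the identity $\lambda_{2m;j,k}(f)=2^{-mj}\langle f^{(m)},2^j\psi_{m;j,k}\rangle$ (the order-$m$ analogue of (\ref{coef}) in Lemma \ref{recov2}), which reduces the claim to applying Theorem \ref{charact3-psi-m} to $f^{(m)}$ via the lifting $\|f\|_{B_{p,\theta}^r}\asymp\|f^{(m)}\|_{B_{p,\theta}^{r-m}}$ and the matching shift $\|\lambda(f)\|_{b_{p,\theta}^r}=\|\mu(f^{(m)})\|_{b_{p,\theta}^{r-m}}$, which reproduces exactly the stated range.
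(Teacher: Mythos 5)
Your main plan is essentially the paper's own proof: the paper never writes out a detailed proof of Theorem \ref{charact-m}, but Section 6 explicitly reduces it to rerunning Sections 3--5 with the order-$m$ objects, and your replication is faithful in all the places where the exponents actually change --- $\alpha=2m-1$ (from $v_{2m}\in B^{2m-1}_{\infty,\infty}$) in the regime $l<0$ of the analogue of Lemma \ref{convol}, $\beta=2m$ versus $\beta=a$ in the analogue of (\ref{f-ineq}) via Lemma \ref{peetre-ineq} applied to $\Delta^{2m}_{2^{-j-1}}$, and the intersection of the synthesis and analysis ranges giving exactly $1/p<r<\min\{2m-1+1/p,2m\}$ resp.\ $\max\{1/p,1/\theta\}<r<2m-1$. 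The convergence and identification arguments (density of finite sequences, $B^r_{p,\theta}\subset C_0$ for compactly supported $f$ with $r>1/p$, order-$m$ analogue of Theorem \ref{conv-c}) also match the proof of Theorem \ref{charact3} verbatim.

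Your proposed ``cleaner alternative route'' via $\lambda_{2m;j,k}(f)=2^{-mj}\langle f^{(m)},2^j\psi_{m;j,k}\rangle$ and Theorem \ref{charact3-psi-m}, however, is not as clean as you claim, for two reasons. First, the lifting $\|f\|_{B^r_{p,\theta}}\asymp\|f^{(m)}\|_{B^{r-m}_{p,\theta}}$ is not a two-sided estimate in general: differentiation is bounded, so $\|f^{(m)}\|_{B^{r-m}_{p,\theta}}\lesssim\|f\|_{B^r_{p,\theta}}$, but the reverse inequality fails without additional information (the standard lift uses $(1+|\xi|^2)^{m/2}$, not $(i\xi)^m$, and pure differentiation loses the low-frequency content); recovering it for compactly supported $f$ requires a Poincar\'e-type argument with constants depending on the support, which is precisely the kind of work the paper's direct difference/local-means argument in Proposition \ref{charact2} avoids. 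Second, the claimed identity $\|\lambda(f)\|_{b^r_{p,\theta}}=\|\mu(f^{(m)})\|_{b^{r-m}_{p,\theta}}$ breaks at the starting level $j=-1$: there $\lambda_{2m;-1,k}(f)=f(k)$, while $\mu_{m;-1,k}(f^{(m)})=\langle f^{(m)},N_m(\cdot+m/2-k)\rangle$ is (after integrating by parts, cf.\ (\ref{dspline})) an $m$-th order difference of $f$ at integers, so the two sequence norms agree only for $j\geq 0$ and the $j=-1$ block needs a separate two-sided comparison. Since you present this route only as a cross-check, the main argument stands; but if you intended it as an actual shortcut proof, these two steps are genuine gaps that must be filled, and the fact that it reproduces the stated parameter ranges is a consistency check rather than a proof.
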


\begin{figure}[h!]
	\center{\includegraphics[width=0.4\linewidth]{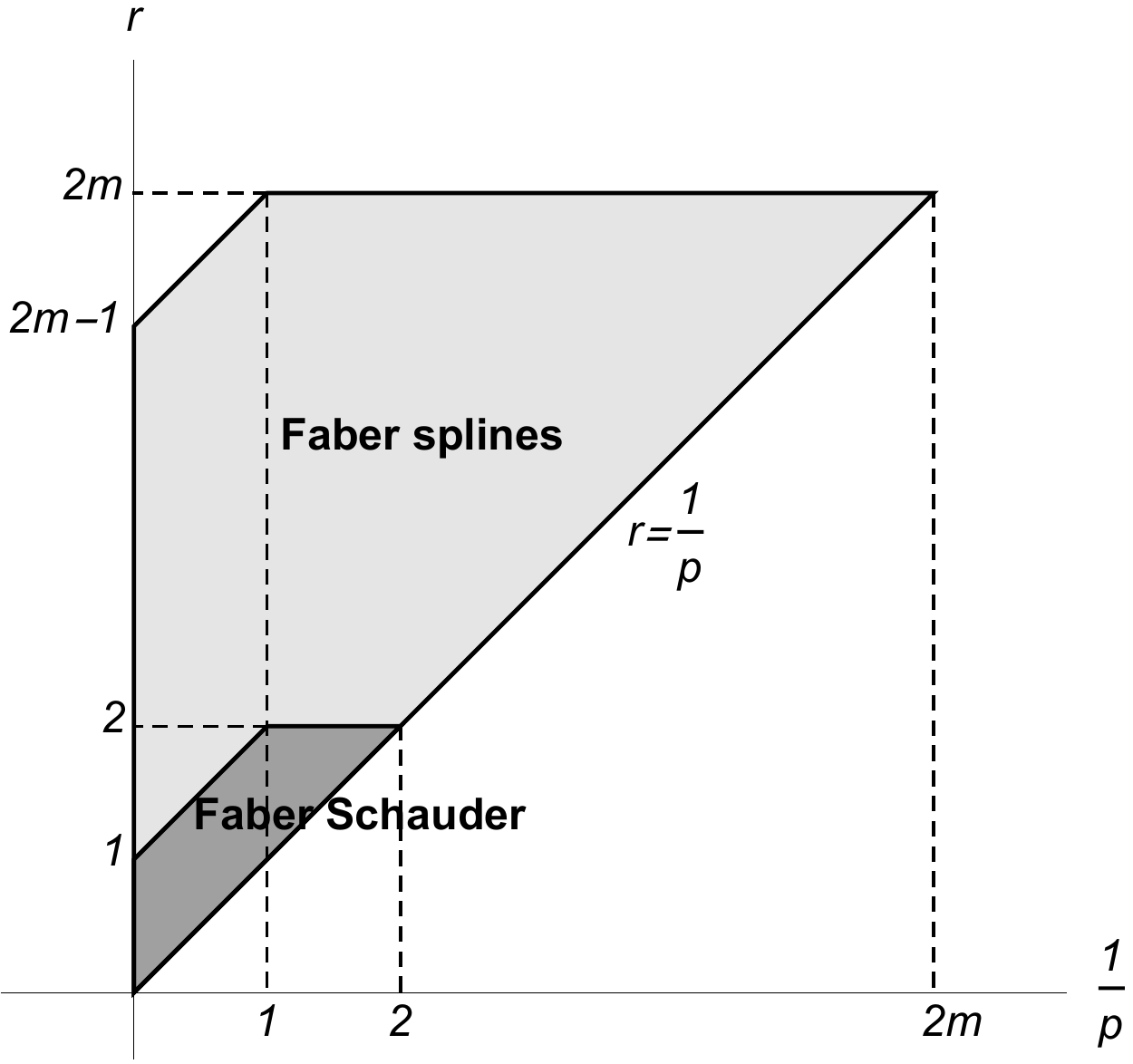}}
	\caption{The range of values of parameter $r$ in Theorem \ref{charact-m} for B-case.}
	\label{faber-spline-B}
\end{figure}

\begin{rem}
In Theorem \ref{charact-m} we consider cases $m\geq 2$. The case $m=1$ corresponds to the Faber Schauder basis and respective characterizations were obtained in \cite{Tr2010}. Note, that for the multivariate situation these results were extended in \cite{Glen2018}.
\end{rem}

\begin{rem}\label{rem-ding}
The operator $S_N$ defined in Lemma \ref{recov2} interpolates a function $f \in C_0(\re)$ at points $k/2^{N}$, $k \in \zz$, i.e. $S_Nf(k/2^N)=f(k/2^N)$ (see Remark \ref{sn-int}). In the papers \cite{DD2009}-\cite{DD2019} the author offers other approach to sampling characterization of Besov spaces based on rather  the quasi-interpolation. This essential difference leads to the fact that the system $s_{2m;j,k}$ constructed in this paper is the ``basis-type''\ system in the sense that this system is linearly independent and further we also prove that $s_{2m;j,k}$ is unconditional basis in Besov-Triebel-Lizorkin spaces while in \cite{DD2009}-\cite{DD2019} the author consider ``frame-type''\ system.  This frame-type system for linear case was also considered in \cite{Oswald2006}.
\end{rem}

\appendix
\section{Definitions and auxiliary statements}

\subsection{Definition of Besov and Triebel-Lizorkin spaces}
First we introduce the concept decomposition the Fourier image called resolution of unity.
\begin{defi}\label{res-unit}
By $\Phi(\re)$ we define a class of systems $\varphi=(\varphi_j)_{j=0}^\infty \subset C_0^\infty(\re)$ that satisfies the following conditions
\begin{itemize}
  \item [(i)] there exists $A>0$ such that $\supp \varphi_0 \subset [-A,A]$;
  \item [(ii)] there are constants $0<B<C$ such that $\supp \varphi_j \subset \{\xi \in \re: B2^j\leq |\xi|\leq C2^j\}$;
  \item [(iii)] for all $\alpha \in \N_0$ there are constants $C_\alpha>0$ such that
  $$
  \sup \limits_{\xi \in \re, \, j \in \N_0} 2^\alpha |D^\alpha \varphi_j(\xi)|\leq C_\alpha <\infty.
  $$
  \item [(iv)] for all $\xi \in \re $
  $$
  \sum\limits_{j=0}^\infty \varphi_j(\xi)=1.
  $$
\end{itemize}
\end{defi}

We define
\begin{equation} \label{deltaj}
\delta_{j}[f](x):=\mathcal{F}^{-1}(\varphi_{j} \mathcal{F} f)(x), \ \ j \in \N_0.
\end{equation}
Then every $f \in S'(\re)$ can be decomposed
\begin{equation} \label{frep}
f=\sum \limits_{j \in \N_0} \delta_{j}[f]
\end{equation}
with convergence in $S'(\re)$.

We define Besov $B_{p,\theta}^r(\re)$ and Triebel-Lizorkin $F_{p,\theta}^r(\re)$ function spaces via the Fourier-analytic building blocks $\delta_{j}[f]$.
\begin{defi}\label{besov}
	Let $\varphi = \{\varphi_j\}_{j=0}^{\infty} \in \Phi(\re)$, $r \in \re$. Then
	\begin{itemize}
		\item [(i)] for $0< p,\theta \leq \infty$ we define
		$$
		B_{p,\theta}^{r}(\re):=\left\{f \in S'(\re): \|f\|_{B_{p,\theta}^{r}(\re)} <\infty \right\},
		$$
		where
		$$
		\|f\|_{B_{p,\theta}^{r}(\re)}:=
		\begin{cases}
		\Big(\sum\limits_{j \in \N_0} 2^{\theta r j} \|\delta_{j}[f]\|_p^\theta   \Big)^{1/\theta}, & 0< \theta <\infty, \\
		\sup \limits_{j \in \N_0}2^{rj} \|\delta_{j}[f]\|_p, & \theta=\infty.
		\end{cases}
		$$
		\item [(ii)] for $0< p,\theta \leq \infty$, $p\neq \infty$ we define
		$$
		F_{p,\theta}^{r}(\re):=\left\{f \in S'(\re): \|f\|_{F_{p,\theta}^{r}(\re)} <\infty \right\},
		$$
		where
		$$
		\|f\|_{F_{p,\theta}^{r}(\re)}:=
		\begin{cases}
	\Big\|	\Big(\sum\limits_{j \in \N_0} 2^{\theta r j} |\delta_{j}[f]|^\theta   \Big)^{1/\theta}\Big\|_p, & 0< \theta <\infty, \\
		\big\|\sup \limits_{j \in \N_0}2^{rj} |\delta_{j}[f]|\big\|_p, & \theta=\infty.
		\end{cases}
		$$
	\end{itemize}
\end{defi}
 Let us give equivalent characterization of Besov-Triebel-Lizorkin spaces via local mean kernels. Let $\Psi_0, \Psi_1 \in S(\re)$ such that for some $\varepsilon>0$: 1) $|\mathcal{F}\Psi_0(\xi)|>0$ for $|\xi|<\varepsilon$; 2) $|\mathcal{F}\Psi_1(\xi)|>0$ for $\varepsilon/2<|\xi|<2\varepsilon$; 3) $D^{\alpha} \mathcal{F}\Psi_1(0)=0$ for $0\leq \alpha <L$. Then for $j\geq 2$
$$
\Psi_j(\xi)=2^{j-1}\Psi_1(2^{j-1}\xi).
$$
From definition it follows that the $L$-th order moment condition hold, i.e. for all $0\leq \alpha<L$
$$
\int\limits_{\re} x^\alpha \Psi_j(x)dx=0.
$$

\begin{satz}\label{local-mean} \cite[Theorem 1.7]{TrIII}
Let $0< p,\theta \leq \infty$ ($p<\infty$ for $F$ case), $\{\Psi_j\}_{j \in \N_0}$ as defined above with $L+1>r$. Then
	$$
	\|f\|_{B_{p,\theta}^{r}(\re)}\asymp
	\begin{cases}
	\Big(\sum\limits_{j \in \N_0} 2^{\theta r j} \|\Psi_{j}*f\|_p^\theta   \Big)^{1/\theta}, & 0< \theta <\infty, \\
	\sup \limits_{j \in \N_0}2^{r j} \|\Psi_{j}*f\|_p, & \theta=\infty,
	\end{cases}
	$$
	and
	$$
\|f\|_{F_{p,\theta}^{r}(\re)}\asymp
\begin{cases}
\Big\|\Big(\sum\limits_{j \in \N_0} 2^{\theta r j} |\Psi_{j}*f|^\theta   \Big)^{1/\theta}\Big\|_p, & 0< \theta <\infty, \\
\Big\|\sup \limits_{j \in \N_0}2^{r j} |\Psi_{j}*f|\Big\|_p, & \theta=\infty,
\end{cases}
$$	
\end{satz}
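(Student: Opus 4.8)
The plan is to read the asserted equivalence as a comparison between two admissible analyzing systems: the smooth dyadic resolution of unity $\varphi=(\varphi_j)$, which defines the norm through $\delta_j[f]=\mathcal{F}^{-1}(\varphi_j\mathcal{F}f)$, and the local means system $(\Psi_j)$. I would prove the two one-sided estimates $\|\{2^{rj}\|\Psi_j*f\|_p\}_j\|_{\ell_\theta}\lesssim\|f\|_{B_{p,\theta}^r}$ and its reverse separately (together with the $F$-counterparts), both resting on a single two-sided almost-orthogonality estimate. Throughout I would decompose $f=\sum_{k\in\N_0}\delta_k[f]$ in $S'(\re)$ and analyze the interaction $\Psi_j*\delta_k[f]$ scale by scale, so that the whole argument reduces to geometric summation in the scale difference $l=j-k$, exactly as in the bookkeeping of Section 4.

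For the first inequality the core is a pointwise estimate of the form
$$
|(\Psi_j*\delta_k[f])(x)|\lesssim 2^{-(j-k)_+L}\,2^{-(k-j)_+\sigma}\,P_{2^{\min\{j,k\}},a}\delta_k[f](x),\qquad x\in\re,
$$
with $a>1/p$ and $\sigma$ as large as we please. For $k\ge j$ the decay comes from the rapid (Schwartz) decay of $\mathcal{F}\Psi_j$ tested against the annular Fourier support $\{|\xi|\asymp 2^k\}$ of $\delta_k[f]$, which gives an arbitrary power $2^{-(k-j)\sigma}$. For $j>k$ the decay comes from the cancellation: since $D^\alpha\mathcal{F}\Psi_1(0)=0$ for $\alpha<L$, equivalently $\int y^\alpha\Psi_j\,dy=0$, a Taylor-remainder estimate against $\delta_k[f]$ (controlled by its Peetre maximal function at the coarse scale $2^k$) produces the factor $2^{-(j-k)L}$. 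Applying the Peetre maximal inequality (Lemma \ref{peetre-ineq1}) to replace $P_{2^{\min\{j,k\}},a}\delta_k[f]$ by $\|\delta_k[f]\|_p$, with the harmless loss $2^{(k-j)_+a}$ when the maximal function is taken at the coarser scale, and then summing $\sum_{l\in\zz}2^{-\alpha|l|}$-type series yields the $B$-estimate; the hypothesis $L+1>r$ is precisely what forces the binding exponent to beat $r$. For the $F$-scale one keeps the maximal functions inside the $L_p(\ell_\theta)$ norm and invokes the vector-valued Fefferman--Stein inequality (Lemma \ref{peetre-ineq2}) with a parameter $\tau<\min\{1,p,\theta\}$ to accommodate small $p,\theta$.

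For the reverse inequality a genuine reconstruction formula is needed, not just a frequency comparison. Using conditions (1)--(2), namely $|\mathcal{F}\Psi_0|>0$ near the origin and $|\mathcal{F}\Psi_1|>0$ on the annulus $\varepsilon/2<|\xi|<2\varepsilon$, I would build a dual family $(\Lambda_j)\subset S(\re)$ with $\mathcal{F}\Lambda_j$ supported in matching dyadic annuli and $\sum_j\mathcal{F}\Lambda_j\,\mathcal{F}\Psi_j\equiv1$, producing the Calderón-type identity $f=\sum_k\Lambda_k*(\Psi_k*f)$ in $S'(\re)$. Then $\delta_j[f]=\sum_k\delta_j[\Lambda_k*(\Psi_k*f)]$, and because $\varphi_j$ has all moments vanishing while $\Lambda_k$ is band-limited, the same almost-orthogonality estimate applies with arbitrarily large decay exponents on both sides, giving $\|\delta_j[\Lambda_k*(\Psi_k*f)]\|_p\lesssim 2^{-|j-k|\sigma'}\|\Psi_k*f\|_p$. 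Summing the geometric series in $j-k$ as before delivers $\|f\|_{B_{p,\theta}^r}\lesssim\|\{2^{rj}\|\Psi_j*f\|_p\}_j\|_{\ell_\theta}$, and the $F$-statement follows in the same way via Fefferman--Stein.

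I expect the main obstacle to be the reverse direction, specifically the construction of the dual kernels $(\Lambda_j)$ realizing the reproducing identity from the mild non-degeneracy hypotheses on $\mathcal{F}\Psi_0,\mathcal{F}\Psi_1$; the low-frequency term $\Psi_0$, which carries no cancellation, must be treated separately from the band-pass pieces. A secondary delicate point is the sharp moment bookkeeping: tracking the Taylor remainder against the Peetre maximal function at the coarse scale $\min\{j,k\}$ so that the hypothesis $L+1>r$ already guarantees convergence of the critical series $\sum_{l<0}2^{l(\sigma-r)}$. For small $p,\theta$ the usual care with the $u$-triangle inequality ($u=\min\{1,p,\theta\}$) and with the exponent $\tau$ in the maximal inequality is required, but this is routine and parallels the arguments of Propositions \ref{charact1} and \ref{charactf1}.
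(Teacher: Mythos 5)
You should first note what the paper actually does with this statement: it does not prove Theorem \ref{local-mean} at all, but imports it verbatim from \cite[Theorem 1.7]{TrIII} and uses it as a black box (in Propositions \ref{charact1}, \ref{charactf1}, \ref{charact2} and Lemma \ref{convol}). So there is no internal proof to compare against; what you have written is a reconstruction of the standard proof of the cited result, in the Bui--Paluszy\'nski--Taibleson/Rychkov style, and its architecture is correct: almost-orthogonality for $\Psi_j\ast\delta_k[f]$ (Schwartz decay of $\mathcal{F}\Psi_j$ on the annulus $|\xi|\asymp 2^k$ when $k>j$, moment-driven decay when $j>k$), Peetre maximal functions discharged via Lemma \ref{peetre-ineq1} resp.\ Lemma \ref{peetre-ineq2}, $u$-triangle inequality for small $p,\theta$, and for the converse a Calder\'on reproducing identity $f=\sum_k\Lambda_k\ast(\Psi_k\ast f)$. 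Your construction of the dual kernels is also the standard and correct one: the dilated annuli on which $\mathcal{F}\Psi_j\neq 0$ together with the ball coming from $\Psi_0$ cover $\re$, so one takes a subordinate partition of unity $(\rho_j)$ and sets $\mathcal{F}\Lambda_j=\rho_j/\mathcal{F}\Psi_j$; and you correctly observe that the reverse direction imposes no constraint through $L$, since $\varphi_j$ is annulus-supported for $j\geq 1$ and $\Lambda_k$ is band-limited.

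There is, however, one genuine soft spot in your bookkeeping. You mislabel the critical series: the branch $l=j-k<0$ carries the arbitrary exponent $\sigma$ and converges for every $r$, so $\sum_{l<0}2^{l(\sigma-r)}$ is not where the hypothesis on $L$ enters; the binding branch is $l>0$, where your displayed bound $|\Psi_j\ast\delta_k[f]|\lesssim 2^{-(j-k)L}P_{2^{\min\{j,k\}},a}\delta_k[f]$ produces $\sum_{l>0}2^{l(r-L)}$ and hence only proves the theorem for $r<L$ --- strictly weaker than the stated threshold $r<L+1$, given that the moments vanish only for $0\leq\alpha<L$. To reach $L+1$ one must gain one extra power beyond the naive moment count in the Taylor-remainder step; compare the mechanism made explicit in the paper's Lemma \ref{convol-chw}, where \emph{two} vanishing moments of $\psi$ yield decay $2^{-3l}$ because the degree-one Taylor remainder of the kernel, of size $2^{3j}|y-x_{j+l,k}|^2$, is integrated over the fine-scale support of measure $\sim 2^{-(j+l)}$. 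In the local-means setting the analogous gain must be extracted carefully (or one more vanishing moment assumed, which is in effect the convention behind the formulation in \cite{TrIII}); as written, your pointwise estimate does not cover the range $L\leq r<L+1$ that the statement claims. Apart from this one-power accounting, the sketch is sound and is exactly the proof the citation points to.
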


Further we define discrete function spaces $b_{p,\theta}^r$ and $f_{p,\theta}^r$.
For $j \in \N_{-1}$ and $k \in \zz$ we define the intervals
$$
I_{j,k}:=
\begin{cases}
[2^{-j}k,2^{-j}(k+1)], & j \geq 0, \\
[k-1/2,k+1/2], & j=-1,
\end{cases}
$$
and the corresponding characteristic functions
$$
\chi_{j,k}(x):=
\begin{cases}
1, & x \in I_{j,k},\\
0, & \text{otherwise}.
\end{cases}
$$

\begin{defi}\label{besov-seq}
 Let $r \in \re$ and $0< p,\theta \leq \infty$.	 By $b_{p,\theta}^{r}$ and $f_{p,\theta}^{r}$ ($p<\infty$ for $f$-case) we define the spaces of sequences of coefficients $\lambda=(\lambda_{j,k})_{j \in \N_{-1}, k \in \zz}$ with the finite norms
 $$
 \|\lambda\|_{b_{p,\theta}^{r}}:=
 	\begin{cases}
 	\Big(\sum\limits_{j \in \N_{-1}} 2^{\theta r j} \Big\|\sum\limits_{k \in \zz} \lambda_{j,k} \chi_{j,k} \Big\|_p^\theta \Big)^{1/\theta},& 0< \theta <\infty, \\
 	\sup \limits_{j \in \N_{-1}}2^{r j} \Big\|\sum\limits_{k \in \zz} \lambda_{j,k} \chi_{j,k} \Big\|_p, & \theta=\infty.
 	\end{cases}
 $$
 and
  $$
 \|\lambda\|_{f_{p,\theta}^{r}}:=
 \begin{cases}
 \Big\| \Big(\sum\limits_{j \in \N_{-1}} 2^{\theta r j} \Big|\sum\limits_{k \in \zz} \lambda_{j,k} \chi_{j,k} \Big|^\theta \Big)^{1/\theta}\Big\|_p,& 0< \theta <\infty, \\
\Big\| \sup \limits_{j \in \N_{-1}}2^{r j} \Big| \sum\limits_{k \in \zz} \lambda_{j,k} \chi_{j,k} \Big| \Big\|_p, & \theta=\infty.
 \end{cases}
 $$
 respectively.
\end{defi}

\subsection{Maximal inequalities}
\begin{defi}\label{peetre}
	Let $b>0$ and $a>0$. Then for $f \in L_1(\re)$ with $\mathcal{F}f$ compactly supported  we define the Peetre maximal operator
	$$
	P_{b,a}f(x):=\sup\limits_{y\in \re}\dfrac{|f(y)|}{(1+b|x-y|)^{a}}.
	$$
\end{defi}

\begin{defi}\label{hardy}
For a locally integrable function $f: \re\rightarrow \mathbb{C}$ we define the Hardy-Littlewood maximal operator defined by
$$
(Mf)(x)=\sup\limits_{x \in Q}\frac{1}{Q}\int\limits_{Q}|f(y)| dy, \ \ x \in \re,
$$
where the supremum is taken over all segments that contain $x$.
\end{defi}

\begin{lem}\label{max-ineq}
For $1<p<\infty$ and $1<q\leq \infty$ there exists a constant $c>0$ such that
$$
\Big\|\Big(\sum\limits_{l \in I}|Mf_l|^q \Big)^{1/q} \Big\|_p\leq c \Big\|\Big(\sum\limits_{l \in I}|f_l|^q \Big)^{1/q} \Big\|_p
$$
holds for all sequences $\{f_l\}_{l \in I}$ of locally Lebesgue integrable functions on $\re$.
\end{lem}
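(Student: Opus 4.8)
The plan is to reduce the vector-valued estimate entirely to scalar properties of the Hardy--Littlewood maximal operator $M$, treating the inner exponent $q$ and the outer integrability $p$ separately. Throughout write $F:=\big(\sum_{l\in I}|f_l|^q\big)^{1/q}$ for the pointwise $\ell_q$-majorant. Two regimes are immediate. If $q=\infty$, then $|f_l|\le\sup_k|f_k|$ pointwise, so by monotonicity of $M$ we get $Mf_l\le M(\sup_k|f_k|)$ and hence $\sup_l Mf_l\le M(\sup_k|f_k|)$; the claim then follows from the scalar bound $\|Mg\|_p\lesssim\|g\|_p$, valid for $1<p<\infty$. If $p=q<\infty$, Tonelli's theorem gives $\big\|(\sum_l|Mf_l|^q)^{1/q}\big\|_p^p=\sum_l\|Mf_l\|_p^p$, and applying the scalar $L_p$-bound termwise and resumming yields the inequality. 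It remains to treat $1<p,q<\infty$ with $p\neq q$, which splits into $q<p$ and $p<q$.

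For $q<p$ I would dualize on the exponent $p/q>1$. Since $\big\|(\sum_l(Mf_l)^q)^{1/q}\big\|_p^q=\big\|\sum_l(Mf_l)^q\big\|_{p/q}$, I choose a nonnegative $g$ with $\|g\|_{(p/q)'}=1$ attaining this norm, so that the left-hand side equals $\sum_l\int_\re(Mf_l)^q g$. Here I invoke the scalar weighted (Fefferman--Stein) inequality $\int_\re(Mh)^q w\lesssim\int_\re|h|^q\,Mw$, valid for $1<q<\infty$ and every weight $w\ge0$, applied with $h=f_l$ and $w=g$. Summing over $l$ bounds the expression by $\int_\re\big(\sum_l|f_l|^q\big)\,Mg=\int_\re F^q\,Mg$, and H\"older together with the boundedness of $M$ on $L_{(p/q)'}(\re)$ (note $(p/q)'>1$) gives $\int_\re F^q\,Mg\le\|F^q\|_{p/q}\,\|Mg\|_{(p/q)'}\lesssim\|F\|_p^q$, which is the desired estimate.

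For $1<p<q<\infty$ the strategy is interpolation in $p$ with $q$ held fixed. Regarding $T\colon(f_l)_l\mapsto(Mf_l)_l$ as a sublinear operator acting on $\ell_q$-valued functions, the diagonal case already supplies the strong bound $T\colon L_q(\re;\ell_q)\to L_q(\re;\ell_q)$. The missing low endpoint is the vector-valued weak-type $(1,1)$ estimate
$$
\big|\{x\in\re:\ (\textstyle\sum_l(Mf_l(x))^q)^{1/q}>\lambda\}\big|\ \lesssim\ \tfrac1\lambda\int_\re F,\qquad\lambda>0 .
$$
Granting it, the vector-valued Marcinkiewicz interpolation theorem---applied to the scalar quantity $x\mapsto(\sum_l|\cdot|^q)^{1/q}$---produces the strong bound on $L_p(\re;\ell_q)$ for every $1<p<q$, completing the proof. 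To approach the weak endpoint I would perform a Calder\'on--Zygmund decomposition of $F\in L_1(\re)$ at height $\lambda$ and split each $f_l=g_l+b_l$ along the \emph{same} family of stopping intervals; the averaged part satisfies $(\sum_l|g_l|^q)^{1/q}\lesssim\lambda$ pointwise (by Minkowski's integral inequality) with $\int_\re(\sum_l|g_l|^q)^{1/q}\lesssim\int_\re F$, so it is controlled at once by the strong $(q,q)$ bound and Chebyshev's inequality.

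The main obstacle is precisely this weak-type $(1,1)$ endpoint, equivalently the range $p<q$. It is not accessible from the strong bounds already in hand, all of which sit at outer exponents $\ge q$, and it does \emph{not} follow by summing the scalar weak-$(1,1)$ bounds for the individual $f_l$, which would only control the $\ell_1$-majorant. The genuinely delicate point is the contribution of the localized pieces $b_l$: for a single mean-zero piece supported on an interval the maximal function only decays like $1/\mathrm{dist}$, which is too slow to be integrated directly, so the pieces must be handled jointly in $\ell_q$ through the full Calder\'on--Zygmund argument of Fefferman and Stein rather than term by term. For that step I would cite the classical Fefferman--Stein maximal theorem; with it in place the duality and interpolation arguments above dispose of all remaining cases.
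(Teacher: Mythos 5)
The paper offers no proof of Lemma \ref{max-ineq} to compare against: it is stated as a known black box, being the classical Fefferman--Stein vector-valued maximal inequality (C.~Fefferman, E.~M.~Stein, \emph{Amer. J. Math.} 93 (1971)), so the only question is whether your argument stands on its own. Your case analysis is the standard textbook architecture, and three of the four regimes are handled correctly and completely: $q=\infty$ by pointwise majorization $Mf_l\le M(\sup_k|f_k|)$; $p=q$ by Tonelli; and $q<p$ by duality on $L_{p/q}$ combined with the scalar weighted inequality $\int_{\re}(Mh)^q w\lesssim\int_{\re}|h|^q\,Mw$. That weighted inequality is itself nontrivial but genuinely independent of the vector-valued statement (it follows from Marcinkiewicz interpolation between the trivial weighted $L_\infty$ bound and the weak-type bound $\int_{\{Mh>\lambda\}}w\lesssim\lambda^{-1}\int|h|\,Mw$, which comes from a Vitali covering argument), so there is no circularity there --- though you should cite or prove it rather than just name it. A cosmetic point: a norming function $g$ in the $L_{p/q}$ duality need not attain the supremum; take a supremum over admissible $g\ge 0$ with $\|g\|_{(p/q)'}\le 1$ instead.

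The genuine gap is in the range $1<p<q$, and you half-acknowledge it yourself. You correctly reduce everything to the $\ell_q$-valued weak-type $(1,1)$ estimate, carry out the Calder\'on--Zygmund decomposition of $F$ at height $\lambda$, dispose of the good part correctly (Minkowski's integral inequality gives $\bigl(\sum_l|g_l|^q\bigr)^{1/q}\lesssim\lambda$ pointwise, then the strong $(q,q)$ bound plus Chebyshev), and correctly diagnose why the bad parts $b_l$ cannot be summed term by term: $Mb_l$ only decays like $1/\mathrm{dist}$, mean zero notwithstanding, since $M$ sees absolute values. But at exactly this point you propose to ``cite the classical Fefferman--Stein maximal theorem'' --- which \emph{is} the statement of Lemma \ref{max-ineq}, so as written the argument is circular at its only hard step. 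To close it honestly one needs the standard completion: for $x$ outside the doubled intervals, $Mb_l(x)\lesssim\sum_j\bigl(\mathrm{avg}_{Q_j}|b_l|\bigr)M\chi_{Q_j}(x)$, so Minkowski's inequality in $\ell_q$ yields $\bigl(\sum_l(Mb_l)^q\bigr)^{1/q}\lesssim\lambda\sum_j M\chi_{Q_j}(x)$, and one concludes with the scalar weak-$L_1$ lemma $\bigl|\bigl\{x:\sum_j M\chi_{Q_j}(x)>c\bigr\}\bigr|\lesssim\sum_j|Q_j|\lesssim\lambda^{-1}\int_{\re}F$. This last lemma is the actual crux of Fefferman and Stein's proof and is not elementary (note that $\sum_j M\chi_{Q_j}$ is \emph{not} pointwise dominated by the maximal function of $\sum_j|Q_j|\delta_{c_j}$ --- a direct comparison loses a logarithm), so it must either be proved or be the thing you cite. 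Pragmatically: since the paper itself simply quotes the result, citing Fefferman--Stein for the lemma as a whole is legitimate; but then your duality and interpolation scaffolding is redundant, and if instead the goal is a self-contained proof, the one step you outsourced is precisely the one that carries all the difficulty.
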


\begin{lem}\label{peetre-ineq} \cite[Lemma 3.3.1]{TDiff06}
	Let $a,b>0$, $m\in \N$, $h \in \re$ and $f\in L_1(\re)$ with $\supp \mathcal{F}f\subset [-b,b]$. Then there exists a constant $C>0$ independent of $f,b$ and $h$ such that
	$$
	|\Delta_h^m f(x)|\leq C \min\{1,|bh|^m\} \max\{1,|bh|^a\} P_{b,a}f(x)
	$$
	holds.
\end{lem}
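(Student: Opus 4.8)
The plan is to reduce the whole statement to a single pointwise Bernstein-type bound for the derivatives of a bandlimited function, and then to split into the two regimes $|bh|\le 1$ and $|bh|>1$, which respectively generate the factors $\min\{1,|bh|^m\}$ and $\max\{1,|bh|^a\}$. The band-limitation $\supp\mathcal{F}f\subset[-b,b]$ is exploited only once, in the derivative estimate; everything else is elementary. Since $f$ is entire of exponential type (Paley--Wiener), it is smooth, so the iterated fundamental theorem of calculus gives
$$
\Delta_h^m f(x)=\int_0^h\!\cdots\!\int_0^h f^{(m)}(x+u_1+\cdots+u_m)\,du_1\cdots du_m,
$$
valid for $h$ of either sign, whence $|\Delta_h^m f(x)|\le |h|^m\sup_{|v|\le m|h|}|f^{(m)}(x+v)|$.

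The key ingredient, and the step I expect to be the main obstacle, is the estimate
$$
|f^{(m)}(y)|\ls b^m\,(1+b|x-y|)^a\,P_{b,a}f(x),\qquad x,y\in\re,
$$
with an implied constant depending only on $m$ and $a$. To prove it I would set up a reproducing formula: choose a Schwartz function $\Phi$ with $\mathcal{F}\Phi\equiv 1$ on $[-1,1]$ and put $\Phi_b(\cdot)=b\,\Phi(b\,\cdot)$, so that $\mathcal{F}\Phi_b\equiv 1$ on $[-b,b]$. Because $\supp\mathcal{F}f\subset[-b,b]$ we then have $f=f\ast\Phi_b$ and hence $f^{(m)}=f\ast\Phi_b^{(m)}$ with $\Phi_b^{(m)}(t)=b^{m+1}\Phi^{(m)}(bt)$. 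Inside the convolution I would bound $|f(y-t)|\le(1+b|x-y+t|)^a P_{b,a}f(x)\le(1+b|x-y|)^a(1+b|t|)^a P_{b,a}f(x)$ and substitute $s=bt$; the remaining integral becomes $b^m\int_{\re}(1+|s|)^a|\Phi^{(m)}(s)|\,ds$, which is finite since $\Phi^{(m)}$ is Schwartz. This yields the displayed derivative bound.

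With the derivative bound in hand the two regimes are routine. For $|bh|\le 1$, inserting it into $|\Delta_h^m f(x)|\le|h|^m\sup_{|v|\le m|h|}|f^{(m)}(x+v)|$ and using $(1+b|v|)^a\le(1+mb|h|)^a$ gives $|\Delta_h^m f(x)|\ls |bh|^m(1+mb|h|)^a P_{b,a}f(x)$; here $(1+mb|h|)^a$ is bounded by a constant, so this reads $|\Delta_h^m f(x)|\ls\min\{1,|bh|^m\}\max\{1,|bh|^a\}P_{b,a}f(x)$, as required. For $|bh|>1$ I would instead estimate the difference termwise, $|\Delta_h^m f(x)|\le\sum_{l=0}^m\binom{m}{l}|f(x+lh)|$, and bound each summand by $(1+b\,l|h|)^a P_{b,a}f(x)\le(1+mb|h|)^a P_{b,a}f(x)$; since $|bh|>1$ we have $(1+mb|h|)^a\ls|bh|^a$, which matches $\min\{1,|bh|^m\}\max\{1,|bh|^a\}=|bh|^a$ in this range. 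Combining the two cases completes the proof.
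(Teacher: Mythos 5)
The paper itself does not prove this lemma --- it is quoted verbatim from \cite[Lemma 3.3.1]{TDiff06} --- and your argument is correct and essentially reproduces the standard proof from that reference: the Bernstein-type bound $|f^{(m)}(y)|\lesssim b^m(1+b|x-y|)^a P_{b,a}f(x)$ via the reproducing identity $f=f\ast\Phi_b$, fed into the iterated-integral representation of $\Delta_h^m f$ in the regime $|bh|\le 1$, and the trivial termwise estimate with $(1+mb|h|)^a\lesssim |bh|^a$ in the regime $|bh|>1$. The only points worth polishing are cosmetic: fix a Fourier normalization so that $f=f\ast\Phi_b$ holds exactly, and observe that $P_{b,a}f(x)<\infty$ (since $\mathcal{F}f$ is continuous with compact support, $f$ is bounded), so all the pointwise manipulations are legitimate.
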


\begin{lem}\label{peetre-ineq1}\cite[1.6.4]{SchTrieb87}
Let $0< p \leq \infty$,  $b>0$ and $a>1/p$. For a bandlimited function $f \in L_1(\re)$ with $\supp \mathcal{F}f\subset [-b,b]$	the following inequality holds
$$
\|P_{b,a}f\|_p	\leq C \|f\|_p,
$$	
where $C$ is some constant independent on $f$ and $b$.
\end{lem}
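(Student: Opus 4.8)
The plan is to reduce the assertion to band width $b=1$ by a dilation, then to dominate the Peetre maximal function (Definition~\ref{peetre}) pointwise by a power of the Hardy--Littlewood maximal operator $M$ (Definition~\ref{hardy}), and finally to apply the maximal inequality of Lemma~\ref{max-ineq}.

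\emph{Reduction to $b=1$.} The case $p=\infty$ is immediate: since $a>1/p=0$ and $\sup_{z\in\re}(1+b|z|)^{-a}=1$ (attained at $z=0$), one has $P_{b,a}f(x)\le\|f\|_\infty$ for every $x$. For $0<p<\infty$ put $g:=f(\cdot/b)$, so that $\supp\mathcal{F}g\subset[-1,1]$. A substitution gives $P_{b,a}f(x)=(P_{1,a}g)(bx)$, whence $\|P_{b,a}f\|_p=b^{-1/p}\|P_{1,a}g\|_p$ and $\|f\|_p=b^{-1/p}\|g\|_p$; the factors $b^{-1/p}$ cancel, so it suffices to prove $\|P_{1,a}g\|_p\lesssim\|g\|_p$ with a constant independent of $g$. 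This already delivers a $b$-independent constant $C$.

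\emph{Pointwise domination.} The heart of the proof is the estimate
\[
P_{1,a}g(x)\le C\,\bigl(M(|g|^t)(x)\bigr)^{1/t}\qquad\text{whenever }t>0,\ at>1.
\]
To prove it I would fix $k\in S(\re)$ with $\mathcal{F}k\equiv1$ on $[-1,1]$, so that $g=g\ast k$ by the support condition, and use the elementary decoupling inequality $(1+|x-z|)\le(1+|x-y|)(1+|y-z|)$ together with the Schwartz decay $(1+|w|)^{N}|k(w)|\le C_N$. For $t\ge1$ I would apply Jensen's inequality to $g=\int g(\cdot-w)k(w)\,dw$ against the measure $|k|\,dw$; for $0<t\le1$ I would instead split $|g(y-w)|=|g(y-w)|^{1-t}|g(y-w)|^{t}$, bound the first factor by $(P_{1,a}g(x))^{1-t}(1+|x-y+w|)^{a(1-t)}$, and divide by the (finite, since the bandlimited $L_1$-function $g$ is bounded and continuous) quantity $(P_{1,a}g(x))^{1-t}$. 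Both cases reduce to $(P_{1,a}g(x))^{t}\lesssim\int_{\re}|g(z)|^{t}(1+|x-z|)^{-at}\,dz$, and a dyadic decomposition of $\re$ into annuli around $x$ shows the right-hand integral is $\lesssim M(|g|^{t})(x)$ precisely when $at>1$.

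\emph{Conclusion and main obstacle.} Since $a>1/p$ is equivalent to $1/a<p$, I can fix $t$ with $1/a<t<p$; then $at>1$ and $p/t>1$. Raising the pointwise bound to the power $p$ and integrating gives $\|P_{1,a}g\|_p\le C\|(M(|g|^t))^{1/t}\|_p=C\|M(|g|^t)\|_{p/t}^{1/t}$, and the scalar Hardy--Littlewood maximal inequality on $L_{p/t}(\re)$ (the single-function case of Lemma~\ref{max-ineq}, valid because $p/t>1$) yields $\|M(|g|^t)\|_{p/t}^{1/t}\lesssim\||g|^t\|_{p/t}^{1/t}=\|g\|_p$. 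I expect the main obstacle to be the pointwise domination lemma itself: handling the two ranges $t\le1$ and $t\ge1$ uniformly and, in the subcritical range, justifying the division by $(P_{1,a}g(x))^{1-t}$, for which the finiteness $P_{1,a}g(x)<\infty$ coming from boundedness of the bandlimited $g$ is essential. The dilation reduction and the final maximal-inequality step are routine.
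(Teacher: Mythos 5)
Your proposal is correct, and it coincides with the standard proof of this result: the paper itself gives no proof, citing \cite[1.6.4]{SchTrieb87}, and your argument --- dilation to $b=1$, the Peetre--Fefferman--Stein pointwise bound $P_{1,a}g(x)\lesssim \bigl(M(|g|^t)(x)\bigr)^{1/t}$ for $at>1$ via $g=g\ast k$, and the scalar Hardy--Littlewood maximal inequality on $L_{p/t}$ with $1/a<t<p$ --- is exactly the route taken there. You also correctly handle the one delicate point, namely justifying the division by $(P_{1,a}g(x))^{1-t}$ in the range $t\le 1$ through the finiteness of $P_{1,a}g(x)$ for a bounded, bandlimited $g\in L_1(\re)$.
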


\begin{lem}\label{peetre-ineq2}\cite[1.6.4]{SchTrieb87}
	Let $0< p,\theta \leq \infty$, $p\neq \infty$, $b^l>0$ for $l\in I$ and $a>\max\{1/p,1/\theta\}$. There exists a constant such that for all systems of functions $\{f_l\}_{l \in I}$ with $\supp \mathcal{F}f_l\subset [-b^l,b^l]$ the following inequality holds
	$$
	\Big\|\Big( \sum\limits_{l \in I}|P_{b^l,a}f_l|^\theta \Big)^{1/\theta} \Big\|_p\leq C \Big\|\Big( \sum\limits_{l \in I}|f_l|^\theta \Big)^{1/\theta} \Big\|_p.
	$$	
\end{lem}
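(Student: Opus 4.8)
The strategy is to combine a pointwise domination of each Peetre maximal function by a fractional power of the Hardy--Littlewood maximal operator $M$ (Definition~\ref{hardy}) with the Fefferman--Stein vector-valued maximal inequality, which is available as Lemma~\ref{max-ineq}. First I would fix an auxiliary exponent $t$ satisfying
$$
\frac1a<t<\min\{p,\theta\},
$$
which is possible exactly because the hypothesis $a>\max\{1/p,1/\theta\}$ means $1/a<\min\{p,\theta\}$ (with the convention $1/\infty:=0$ if $\theta=\infty$). The lower bound $t>1/a$, i.e.\ $at>1$, will be used to make a weighted kernel integrable in the pointwise step, whereas the upper bound $t<\min\{p,\theta\}$ ensures that the rescaled exponents $p/t$ and $\theta/t$ both exceed $1$, which is precisely the hypothesis of Lemma~\ref{max-ineq}.

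The heart of the proof is the pointwise estimate
$$
P_{b^l,a}f_l(x)\ls\bigl(M(|f_l|^t)(x)\bigr)^{1/t},\qquad x\in\re,
$$
with a constant independent of $l$ and, crucially, of $b^l$. To establish it I would first rescale to $b^l=1$; since $\supp\mathcal{F}f_l\subset[-b^l,b^l]$ this costs nothing and the constant is scale invariant. Fixing $\omega\in\cs$ with $\mathcal{F}\omega\equiv1$ on $[-1,1]$ and rapid decay, the band limitation yields the reproducing identity $f_l=f_l*\omega$, whence $|f_l(x-z)|\le\int_{\re}|f_l(x-u)|\,|\omega(u-z)|\,du$. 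Using $1+|u|\le(1+|z|)(1+|u-z|)$ to move the weight $(1+|z|)^{-a}$ onto $(1+|u|)^{-a}$ and absorbing the compensating factor $(1+|u-z|)^{a}$ into the rapidly decaying $\omega$, one reduces matters to bounding a convolution of $|f_l|^{t}$ against a kernel behaving like $(1+|u|)^{-at}$; since $at>1$ this kernel is dominated by a radially decreasing integrable function, so the convolution is $\ls M(|f_l|^t)(x)$. Exploiting the decay of $\omega$ to arbitrary order, via a dyadic splitting of the $u$-integral, shows that the exponent $t$ plays no special role and only $at>1$ is needed, so the bound holds for all admissible $t$. The scalar case is exactly Lemma~\ref{peetre-ineq1}, which provides a useful cross-check.

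Granting the pointwise estimate, set $g_l:=|f_l|^t$, so that $(P_{b^l,a}f_l)^{\theta}\ls(Mg_l)^{\theta/t}$ pointwise. Hence
$$
\Bigl\|\Bigl(\sum_{l\in I}(P_{b^l,a}f_l)^{\theta}\Bigr)^{1/\theta}\Bigr\|_p
\ls\Bigl\|\Bigl(\sum_{l\in I}(Mg_l)^{\theta/t}\Bigr)^{1/\theta}\Bigr\|_p,
$$
and applying Lemma~\ref{max-ineq} to the family $\{g_l\}_{l\in I}$ with outer exponent $p/t\in(1,\infty)$ (here $p\neq\infty$ is used) and inner exponent $\theta/t>1$ (allowed to be $+\infty$, which covers the case $\theta=\infty$) bounds the right-hand side by
$$
\Bigl\|\Bigl(\sum_{l\in I}g_l^{\theta/t}\Bigr)^{1/\theta}\Bigr\|_p
=\Bigl\|\Bigl(\sum_{l\in I}|f_l|^{\theta}\Bigr)^{1/\theta}\Bigr\|_p,
$$
which is the claim. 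The main obstacle is the pointwise estimate, specifically proving it with a constant uniform in $b^l$ and valid for the full range of $t$ (not merely $t\le1$); once it is in hand, the passage to the vector-valued norm is a direct application of the Fefferman--Stein inequality.
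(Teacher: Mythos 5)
You have reconstructed, essentially verbatim, the proof from the cited source: the paper itself gives no argument for this lemma (it is imported from \cite[1.6.4]{SchTrieb87}), and the proof there is exactly your two-step scheme --- fix $t$ with $1/a<t<\min\{p,\theta\}$, establish the pointwise bound $P_{b^l,a}f_l(x)\lesssim\bigl(M(|f_l|^t)(x)\bigr)^{1/t}$ with a constant uniform in $b^l$ (by dilation invariance), and finish with the Fefferman--Stein vector-valued maximal inequality (Lemma \ref{max-ineq}) at the rescaled exponents $p/t>1$ and $\theta/t>1$ --- so your proposal is correct and essentially identical to the source's proof. The one spot where your sketch is thinner than the reference is the pointwise estimate when $t<1$: the convolution bound as you describe it is circular (moving the weight through $1+|u|\le(1+|z|)(1+|u-z|)$ merely returns $P_{b^l,a}f_l(x)\le C\,P_{b^l,a}f_l(x)$), and the standard completion is the absorption trick --- write $|f_l(x-u)|(1+|u|)^{-a}\le\bigl(P_{b^l,a}f_l(x)\bigr)^{1-t}|f_l(x-u)|^{t}(1+|u|)^{-at}$, divide both sides by $\bigl(P_{b^l,a}f_l(x)\bigr)^{1-t}$ after verifying a priori finiteness of the maximal function (e.g.\ via the polynomial growth of band-limited tempered distributions, bootstrapping from a large exponent $a$), and only then invoke integrability of the kernel $(1+|u|)^{-at}$ with $at>1$ to dominate by $M(|f_l|^t)(x)$ --- rather than the dyadic splitting you gesture at, which by itself does not get the power $t$ inside the integral.
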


\subsection{Convolution inequalities}
\begin{lem} \label{convol}
	Let $j \in \N_0$, $k,l \in \zz$ and $j+l\geq -1$. Then for the local means $\Psi_j$ with $\supp \Psi_0 \subset [-1/2,1/2]$ and $\supp \Psi_j\subset [-2^{-j},2^{-j}]$ with finitely many  vanishing moments of order $L$ the following estimates hold
	\begin{equation}\label{conv-b-1}
	|\Psi_j * s_{j+l,k}(x)|\leq C 2^{-\alpha|l|} \sum \limits_{n \in \zz} |a_n| \chi_{A_{j+l,k+n}}(x)
	\end{equation}
	and
	\begin{equation}\label{conv-b-2}
	|\Psi_j * s_{j+l,k}(x)|\leq C_R 2^{-\alpha|l|} (1+2^{\min\{j,j+l\}}|x-x_{j+l,k}|)^{-R},
	\end{equation}
	where $\alpha=1$ if $l\geq 0$, $\alpha=3$ if $l<0$. Coefficients $a_n$, $n\in \zz$, are defined by (\ref{dualwavcoef}) for $j+l\geq 0$ and as $a_n=(-1)^n \sqrt{3} (2-\sqrt{3})^{|n|}$ for $j+l=-1$.  For the set $A_{j+l,n+k}$ we have $A_{j+l,n+k}\subset \cup_{|u-k|\lesssim 2^{l_+}} I_{j+l_+,u+n}$.
\end{lem}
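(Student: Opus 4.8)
The plan is to reduce everything to a single-atom estimate and then sum against the coefficients $a_n$. Writing $s_{j+l,k}(x)=\sum_{n\in\zz}a_n\,v_{j+l,k+n}(x)$ with $v_{j+l,m}(x):=v(2^{j+l}x-m)$ (for $j+l=-1$ the atom $v$ is replaced by the compactly supported, $C^2$ generator $N_4$ and $a_n=(-1)^n\sqrt3(2-\sqrt3)^{|n|}$), linearity of convolution gives $\Psi_j*s_{j+l,k}=\sum_n a_n(\Psi_j*v_{j+l,k+n})$. Recall from (\ref{bfun}) that $v$ is supported on $[0,3]$, belongs to $C^2(\re)$ and is piecewise cubic, so that $v''$ is Lipschitz with $\mathrm{Lip}(v'')=\|v'''\|_\infty<\infty$; this ``smoothness $3$'' is exactly what drives the negative-$l$ case. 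It therefore suffices to prove the per-atom bound $|\Psi_j*v_{j+l,m}(x)|\lesssim 2^{-\alpha|l|}\chi_{A_{j+l,m}}(x)$ and insert it termwise.

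First I would treat $l\ge 0$. Here $v_{j+l,m}$ lives at the finer scale $2^{-(j+l)}$ while $\Psi_j$ sits at the coarser scale $2^{-j}$, so no cancellation is needed: the crude H\"older bound $|\Psi_j*v_{j+l,m}(x)|\le \|v_{j+l,m}\|_1\,\|\Psi_j\|_\infty$ already gives the right power, since $\|v_{j+l,m}\|_1=2^{-(j+l)}\|v\|_1$ and $\|\Psi_j\|_\infty\asymp 2^{j}\|\Psi_1\|_\infty$, whence the product is $\asymp 2^{-l}=2^{-\alpha|l|}$ with $\alpha=1$. Because $\supp\Psi_j\subset[-2^{-j},2^{-j}]$ and $v_{j+l,m}$ is supported on an interval of length $3\cdot 2^{-(j+l)}$, the convolution is supported on an interval of length $\lesssim 2^{-j}$ centred at the node, i.e.\ inside $\bigcup_{|u-k|\lesssim 2^{l}}I_{j+l,u+n}$, which is the stated set $A_{j+l,m}$.

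The heart of the argument is $l<0$, where $v_{j+l,m}$ is now the coarse object and $\Psi_j$ the fine one carrying vanishing moments. Using that $\Psi_j$ annihilates polynomials of degree $\le 2$ (available since the moment order satisfies $L\ge 3$, which is guaranteed by $L+1>r$ with $r<4$), I would subtract the second order Taylor polynomial of $y\mapsto v_{j+l,m}(x-y)$ at $y=0$ and write $\Psi_j*v_{j+l,m}(x)=\int\Psi_j(y)\,R(x,y)\,dy$, where the Lipschitz bound on $v''$ yields $|R(x,y)|\le \tfrac16\,\mathrm{Lip}(v''_{j+l,m})\,|y|^3$ with $\mathrm{Lip}(v''_{j+l,m})=2^{3(j+l)}\|v'''\|_\infty$. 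Since $\int|\Psi_j(y)|\,|y|^3\,dy\asymp 2^{-3j}$ by the scaling $\Psi_j(y)=2^{j-1}\Psi_1(2^{j-1}y)$, this produces $|\Psi_j*v_{j+l,m}(x)|\lesssim 2^{3(j+l)}2^{-3j}=2^{3l}=2^{-\alpha|l|}$ with $\alpha=3$, uniformly over the support, which is now the $\lesssim 2^{-(j+l)}$-neighbourhood of the node (the set $A_{j+l,m}$ in this case). The boundary case $j+l=-1$ is identical once $v$ is replaced by $N_4\in C^2$, whose second derivative is again Lipschitz.

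Finally I would reassemble. Inserting the per-atom bounds into $\Psi_j*s_{j+l,k}=\sum_n a_n(\Psi_j*v_{j+l,k+n})$ and using $|a_n|$ as weights gives (\ref{conv-b-1}) directly, the series converging because the $a_n$ decay exponentially (Theorem \ref{dual-wav-repr}). For the pointwise form (\ref{conv-b-2}), I would fix $x$ and observe that $x\in A_{j+l,k+n}$ forces $|n|\gtrsim 2^{\min\{j,j+l\}}|x-x_{j+l,k}|-C$; summing the exponentially small $|a_n|$ over this range and comparing with any fixed algebraic rate yields $\sum_n|a_n|\chi_{A_{j+l,k+n}}(x)\lesssim_R(1+2^{\min\{j,j+l\}}|x-x_{j+l,k}|)^{-R}$ for every $R$, since exponential decay dominates every polynomial. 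The main obstacle I anticipate is the negative-$l$ estimate: one must match the limited regularity of the piecewise cubic atom (only $C^2$, with $v''$ merely Lipschitz) against exactly three vanishing moments of $\Psi_j$, and keep precise track of the scale factors $2^{3(j+l)}$ and $2^{-3j}$ so that the gain is exactly $2^{-3|l|}$ and not less.
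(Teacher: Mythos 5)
Your decomposition into atoms $v_{j+l,k+n}$, the crude size bound $\|\Psi_j\|_\infty\|v_{j+l,m}\|_1\asymp 2^{-l}$ for $l\ge 0$, and the cancellation estimate for $l<0$ are all sound, and the latter is essentially equivalent to the paper's argument: where you subtract the degree-two Taylor polynomial and use $L\ge 3$ vanishing moments together with $v''=\psi_2$ Lipschitz to get the factor $2^{3(j+l)}2^{-3j}=2^{3l}$, the paper instead rescales the convolution to $|\Psi_{-l}*v_{0,k}(2^{j+l}x)|$ and quotes $v\in B^3_{\infty,\infty}$ via the local means characterization (Theorem \ref{local-mean}); these are two phrasings of the same mechanism. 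Your passage from (\ref{conv-b-1}) to (\ref{conv-b-2}) by summing the exponentially decaying $|a_n|$ over $|n|\gtrsim 2^{\min\{j,j+l\}}|x-x_{j+l,k}|$ also works and matches the paper's computation. (A minor caveat you should add: for $j=0$ the kernel $\Psi_0$ has no vanishing moments, so your Taylor step is unavailable; but then $l<0$ forces $l=-1$ and $2^{3l}$ is an absolute constant, so the crude bound suffices, exactly as in the paper's inequality (\ref{neg3}).)

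There is, however, a genuine gap in the $l<0$ case: the localization. You take $A_{j+l,m}$ to be the full $\lesssim 2^{-(j+l)}$-neighbourhood of the node, i.e.\ a set of measure $\sim 2^{-j-l}$, whereas the lemma asserts $A_{j+l,n+k}\subset \bigcup_{|u-k|\lesssim 2^{l_+}} I_{j+l_+,u+n}$, which for $l<0$ (so $l_+=0$) means a union of $O(1)$ intervals of length $2^{-j}$, of total measure $\lesssim 2^{-j}$ --- smaller than your set by the factor $2^{|l|}$. This structural claim is part of the statement and is precisely what Proposition \ref{charact1} consumes: there the covering multiplicity $|G_l(k)|\asymp 2^{l_+}$ and the measure $|I_{j+l_+,\cdot}|=2^{-j-l_+}$ enter the $L_p$-estimate, and with your coarser set one loses a factor $2^{|l|/p}$ for $l<0$, which effectively degrades $\alpha=3$ to $3-1/p$ and destroys the upper range $r<3+1/p$ in the Besov characterization (hence the range in Theorem \ref{charact3}). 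The missing idea, which the paper uses and your Taylor argument cannot see, exploits the piecewise-polynomial structure of $v$ rather than just its $C^{2,1}$ regularity: if $\dist(x,\{\text{nodes of } v_{j+l,m}\})\gtrsim 2^{-j}$, then on the support of $\Psi_j(x-\cdot)$, which has length $\le 2^{1-j}$, the atom $v_{j+l,m}$ coincides with a single cubic polynomial, and since $\Psi_j$ ($j\ge1$) has vanishing moments of order $L\ge 4$ it annihilates cubics, so $\Psi_j*v_{j+l,m}(x)=0$ exactly. Hence the convolution is supported in at most seven intervals of length $\sim 2^{-j}$ centered at the nodes of $v_{j+l,m}$, which yields the asserted form of $A_{j+l,m}$; note this step needs four vanishing moments, one more than your cancellation estimate uses.
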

\begin{proof}
	Let us first consider the convolution $\Psi_j * v_{j+l,k}$ for $v_{j+l,k}=v(2^{j+l} \cdot +k)$ where $v$ is defined in Lemma \ref{recov2} for $j+l\geq 0$. If $j+l=-1$ we take $v=N_4$ (or if to be strict $N_4(\cdot+2)$).
	
	Since $\supp \Psi_j \subseteq [-2^{-j},2^{-j}]$ and $\supp v_{j+l,k}\subseteq [2^{-j-l}k,2^{-j-l}(k+4)]$
	we can write the following necessary conditions
	$$
	|x-y| \lesssim  2^{-j} \ \ \ \text{and} \ \ \ |x_{j+l,k}-x| \lesssim 2^{-j-l}
	$$
	for the non vanishing of the integral $\Psi_j * v_{j+l,k}(x)$.
	From the triangle inequality we have
	$$
	|x_{j+l,k}-x| \leq |x_{j+l,k}-x| + |x-y| \lesssim \max\{2^{-j}, 2^{-j-l}\}.
	$$
	Therefore, if we denote
	$A_{j+l,k}:=\{x: \, |x_{j+l,k}-x| \lesssim \max\{2^{-j}, 2^{-j-l}\}\}$
	we can write
	$$
	|\Psi_j * v_{j+l,k}(x)|=|\Psi_j * v_{j+l,k}(x)| \chi_{A_{j+l,k}}(x).
	$$
	
	Let us consider the case $j>0$ and $l<0$. In this case the support of $v_{j+l,k}$ is larger than the support of $\Psi_j$. Since $\Psi_j$ has vanishing moments of order 4 and since $v_{j+l,k}$ is piecewise cubic then the integral $\Psi_j * v_{j+l,k}(x)$ is not vanishing in the union of not more then seven intervals centered at nodes of function $v$ with length $\sim 2^j$. For this set we also use notation $A_{j+l,k}\subseteq \cup_{|u-k|\leq c} I_{j,u}$.
	Making change of variables we get
	\begin{align}
	|\Psi_j * v_{j+l,k}(x)|&=\Big| 2^{j-1} \int\limits_{\re} \Psi_1(2^{j-1}(x-y)) v(2^{j+l}y-k) dy \Big| \chi_{A_{j+l,k}}(x) \notag\\
	&=\Big| 2^{-l-1} \int\limits_{\re} \Psi_1(2^{-l-1}(2^{j+l}x-y)) v(y-k) dy \Big| \chi_{A_{j+l,k}}(x) \notag\\
	&=|\Psi_{-l} * v_{0,k}(2^{j+l}x)| \chi_{A_{j+l,k}}(x). \label{negl}
	\end{align}
	Since $v\in B_{\infty,\infty}^3$ by using characterization of Besov spaces via local means Theorem \ref{local-mean} we can proceed the estimate (\ref{negl})
	\begin{align}
	|\Psi_j * v_{j+l,k}(x)|&=2^{3l} |2^{-3l}\Psi_{-l} * v_{0,k}(2^{j+l}x)|\chi_{A_{j+l,k}}(x) \notag\\
	&\leq 2^{3l} \|v_{0,k}\|_{B_{\infty,\infty}^3} \chi_{A_{j+l,k}}(x) \lesssim 2^{3l} \chi_{A_{j+l,k}}(x). \label{neg2}
	\end{align}
	
	For $l\geq 0$ we have
	\begin{align}
	|\Psi_j * v_{j+l,k}(x)|&\leq 2^{j-1} \int\limits_{\re} |\Psi_1(2^{j-1}(x-y))| \, |v_{j+l,k}(y)| dy \, \chi_{A_{j+l,k}}(x) \notag\\
	&\leq2^{j-1} \|\Psi_1\|_\infty \|v_{j+l,k}\|_\infty \int\limits_{2^{-j-l}k}^{2^{-j-l}(k+4)} 1 dy \, \chi_{A_{j+l,k}}(x) \notag\\
	& \lesssim 2^{-l} \, \chi_{A_{j+l,k}}(x). \label{pos}
	\end{align}
	In this case the set $A_{j+l,k}$ is written as $A_{j+l,k}=[2^{-j-l}(k-c2^l),2^{-j-l}(k+c2^l)]$. Therefore, it is easy to see that the following inclusion  $A_{j+l,k} \subset \bigcup_{|u-k|\lesssim 2^{l}} I_{j+l,u}$ takes place.
	
	If $j=0$ we use arguments as in (\ref{pos}). We have
		\begin{equation}\label{pos1}
	|\Psi_0 * v_{l,k}(x)| \leq \|\Psi_0\|_\infty \|v_{l,k}\|_\infty \int\limits_{2^{-l}k}^{2^{-l}(k+4)} 1 dy \, \chi_{A_{l,k}}(x)  \lesssim 2^{-l} \, \chi_{A_{l,k}}(x).
		\end{equation}
	We use the inequality (\ref{pos1}) for $l\geq 0$. If $l=-1$ we write it in the following way
	\begin{equation}\label{neg3}
	|\Psi_0 * v_{-1,k}(x)| \leq C \chi_{A_{-1,k}}(x)=C_{1} 2^{-3} \chi_{A_{-1,k}}(x),
	\end{equation}
	where $\chi_{A_{-1,k}}(x) \subset \bigcup_{|u-k|\lesssim c} I_{c_1,u}$. So we get (\ref{neg2}) for $j=0$ and $l=-1$.

	From (\ref{neg2}), (\ref{pos}), (\ref{pos1}) and (\ref{neg3}) we have that for the functions  $v_{j+l,k}$ the following inequality holds
	\begin{equation}\label{bineq}
	|\Psi_j * v_{j+l,k}(x)|\leq C 2^{-\alpha |l|}\chi_{A_{j+l,k}}(x),
	\end{equation}
	where $\alpha=1$ if $l\geq 0$, $\alpha=3$ if $l<0$ and $A_{j+l,k} \subset \bigcup_{|u-k|\lesssim 2^{l_+}} I_{j+l_+,u}$.
	
	Now we prove the inequality (\ref{conv-b-1}) for functions $s_{j+l,k}$ defined by (\ref{bspline}) and (\ref{b2}).  If $j+l\geq 0$ we consider $a_n$ defined as in (\ref{dualwavcoef}). For $j+l=-1$ we put $a_n=(-1)^n \sqrt{3} (2-\sqrt{3})^{|n|}$. By using (\ref{bineq}) we get
	\begin{align*}
	|\Psi_j * s_{j+l,k}(x)|& = \Big| \sum\limits_{n\in \zz} a_n \int\limits_{\re} \Psi_j(x-y) v(2^{j+l}y-k-n) dy \Big| \\
	& = \Big| \sum\limits_{m\in \zz} a_{m-k} \int\limits_{\re} \Psi_j(x-y) v(2^{j+l}y-m) dy \Big|\\
	& \leq \sum\limits_{m\in \zz} |a_{m-k}| |\Psi_j * v_{j+l,m}(x)|\\
	& \leq C 2^{-\alpha |l|} \sum\limits_{m\in \zz} |a_{m-k}| \chi_{A_{j+l,m}}(x)\\
	& = C 2^{-\alpha |l|} \sum\limits_{n\in \zz} |a_{n}| \chi_{A_{j+l,n+k}}(x).
	\end{align*}
	For the set $A_{j+l,n+k}$ we have $A_{j+l,n+k}\subset \bigcup\limits_{|u-k|\lesssim 2^{l_+}} I_{j+l_+,u+n}$.
	
	Now we prove the inequality (\ref{conv-b-2}). From (\ref{bineq}) we have that for each $R>0$ there is a constant $C_R$ such that
	$$
	\chi_{A_{j+l,k}}(x)\leq C_R(1+2^{\min\{j,j+l\}}|x-x_{j+l,k}|)^{-R}.
	$$
	Therefore,
	$$
	|\Psi_j * v_{j+l,k}(x)|\leq C 2^{-\alpha |l|}C_R(1+2^{\min\{j,j+l\}}|x-x_{j+l,k}|)^{-R}.
	$$
	By using this inequality we can write
	\begin{align*}
	|\Psi_j * s_{j+l,k}(x)|&\leq \sum\limits_{m\in \zz} |a_{m-k}| |\Psi_j * v_{j+l,m}(x)|\\
	& \leq C_R 2^{-\alpha |l|} \sum\limits_{m\in \zz} |a_{m-k}| (1+2^{\min\{j,j+l\}}|x-x_{j+l,m}|)^{-R} \\
	&= C_R 2^{-\alpha |l|} \sum\limits_{n\in \zz} |a_{n}| (1+2^{\min\{j,j+l\}}|x-x_{j+l,n+k}|)^{-R}\\
	&=C_R 2^{-\alpha |l|} (1+2^{\min\{j,j+l\}}|x-x_{j+l,k}|)^{-R} \sum\limits_{n\in \zz}  \frac{|a_{n}|(1+2^{\min\{j,j+l\}}|x-2^{-(j+l)}k|)^{R}}{(1+2^{\min\{j,j+l\}}|x-2^{-(j+l)}(k+n)|)^{R}} \\
	&\leq C_R 2^{-\alpha |l|} (1+2^{\min\{j,j+l\}}|x-x_{j+l,k}|)^{-R} \sum\limits_{n\in \zz} |a_{n}| (1+|n|)^R.
	\end{align*}
	From the viewpoint on definition of coefficients $a_n$ we get convergence of the series $\sum\limits_{n\in \zz} |a_{n}| (1+|n|)^R$ that implies  (\ref{conv-b-2}).
\end{proof}

\begin{lem} \label{convol-chw}
	Let $j \in \N_0$, $k,l \in \zz$ and $j+l\geq -1$. Then for the local means $\Psi_j$ with $\supp \Psi_j\subset [-2^{-j},2^{-j}]$ with finitely many  vanishing moments of order $L$ the following estimates hold
	\begin{equation}\label{conv-psi-1}
	|\Psi_j * \psi^*_{j+l,k}(x)|\leq C 2^{-\alpha|l|} \sum \limits_{n \in \zz} |a_n| \chi_{A_{j+l,k+n}}(x)
	\end{equation}
	and
	\begin{equation}\label{conv-psi-2}
	|\Psi_j * \psi^*_{j+l,k}(x)|\leq C_R 2^{-\alpha|l|} (1+2^{\min\{j,j+l\}}|x-x_{j+l,k}|)^{-R},
	\end{equation}
	where $\alpha=3$ if $l\geq 0$, $\alpha=1$ if $l<0$. Coefficients $a_n$, $n\in \zz$, are defined by (\ref{dualwavcoef}) for $j+l\geq 0$ and as $a_n=(-1)^n \sqrt{3} (2-\sqrt{3})^{|n|}$ for $j+l=-1$.  For the set $A_{j+l,n+k}$ we have $A_{j+l,n+k}\subset \bigcup\limits_{|u-k|\lesssim 2^{l_+}} I_{j+l_+,u+n}$.
\end{lem}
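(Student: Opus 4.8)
The plan is to follow the proof of Lemma~\ref{convol} almost verbatim, with the roles of smoothness and of vanishing moments interchanged. Whereas the atom $v$ used there is piecewise cubic (hence lies in $B^3_{\infty,\infty}$) but carries no vanishing moments, the present atom is the primal wavelet $\psi_2$ (for $j+l\geq 0$), which is only piecewise linear, i.e. $\psi_2\in B^1_{\infty,\infty}$, but has two vanishing moments (being orthogonal to $V_0$, the order-$2$ spline space, which reproduces affine functions). This is exactly why the exponent $\alpha$ attached to $l\geq 0$ and to $l<0$ is swapped relative to Lemma~\ref{convol}. First I would reduce to a single atom by inserting the dual representation of Theorem~\ref{dual-wav-repr},
$$
\psi^*_{j+l,k}=\sum_{n\in\zz}a_n\,\psi_{2;j+l,k+n},
$$
and, for $j+l=-1$, the analogous expansion $\psi^*_{-1,k}=\sum_{n}b_n\,N_2(\cdot-k+1-n)$ with $b_n=(-1)^n\sqrt3(2-\sqrt3)^{|n|}$, so that $\Psi_j*\psi^*_{j+l,k}=\sum_n a_n\,(\Psi_j*\psi_{2;j+l,k+n})$. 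It then suffices to prove the single-atom bound $|\Psi_j*\psi_{2;j+l,m}(x)|\lesssim 2^{-\alpha|l|}\chi_{A_{j+l,m}}(x)$ with $A_{j+l,m}$ localized exactly as in Lemma~\ref{convol}.

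For the atom estimate I would split into the two scale regimes. For $l<0$ the atom $\psi_{2;j+l,m}$ is the wide one and $\Psi_j$ is the narrow kernel; using a single vanishing moment of $\Psi_j$ together with the Lipschitz continuity of $\psi_2$,
$$
|\Psi_j*\psi_{2;j+l,m}(x)|=\Big|\int_\re \Psi_j(x-y)\big(\psi_{2;j+l,m}(y)-\psi_{2;j+l,m}(x)\big)\,dy\Big|\lesssim 2^{j+l}\,2^{-j}=2^{l},
$$
since $|y-x|\lesssim 2^{-j}$ on $\supp\Psi_j(x-\cdot)$ and $\mathrm{Lip}(\psi_{2;j+l,m})\asymp 2^{j+l}$; this gives $\alpha=1$. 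Because $\psi_2$ is piecewise linear, requiring $\Psi_j$ to carry at least two vanishing moments (admissible, as $L$ may be chosen $\geq 2$) forces the convolution to vanish away from the finitely many breakpoints of $\psi_{2;j+l,m}$, which produces the localization of $A_{j+l,m}$ onto a bounded number of level-$j$ intervals. For $l\geq 0$ the atom is the narrow one; here I would substitute $u=2^{j+l}y-m$ and Taylor-expand $\Psi_j$ to first order about the centre of $\supp\psi_2$, annihilating the constant and linear terms by the two vanishing moments of $\psi_2$ and controlling the remainder by $\sup|\Psi_j''|\lesssim 2^{3j}$. Tracking the two factors $2^{-(j+l)}$ from differentiating in $u$ and the factor $2^{-(j+l)}$ from the Jacobian gives
$$
|\Psi_j*\psi_{2;j+l,m}(x)|\lesssim 2^{-(j+l)}\cdot 2^{-2(j+l)}2^{3j}=2^{-3l},
$$
i.e. $\alpha=3$. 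The boundary case $j+l=-1$ occurs only for $l<0$ and is treated with $N_2$ in place of $\psi_2$; as $N_2$ is again piecewise linear and Lipschitz (with no vanishing moments, which are not needed in this regime) the same argument yields $\alpha=1$.

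Finally I would assemble the two estimates \eqref{conv-psi-1} and \eqref{conv-psi-2}. Summing the single-atom characteristic-function bound against $|a_n|$ (resp. $|b_n|$) gives \eqref{conv-psi-1} directly. For \eqref{conv-psi-2} I would first pass from $\chi_{A_{j+l,m}}$ to polynomial decay, $\chi_{A_{j+l,m}}(x)\le C_R(1+2^{\min\{j,j+l\}}|x-x_{j+l,m}|)^{-R}$, using that $A_{j+l,m}$ lies in a ball of radius $\asymp 2^{-\min\{j,j+l\}}$ about $x_{j+l,m}$, and then sum over $n$; the shift from $x_{j+l,k+n}$ to $x_{j+l,k}$ costs a factor $(1+|n|)^R$, and the series $\sum_n|a_n|(1+|n|)^R$ converges because the coefficients $a_n$ of \eqref{dualwavcoef} (and the $b_n$) decay exponentially. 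The step I expect to be the genuine obstacle is the $l\geq 0$ atom estimate: one must verify that $\psi_2$ indeed has two vanishing moments and then carefully balance the dyadic scaling $\Psi_j^{(s)}\lesssim 2^{j(s+1)}$ against the narrow-atom rescaling so that precisely the exponent $3$—and not $2$ or $1$—is produced; the remainder of the argument is a transcription of Lemma~\ref{convol}.
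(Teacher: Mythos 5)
Your proposal is correct and follows essentially the same route as the paper's proof: for $l\geq 0$ the paper likewise subtracts the first-order Taylor polynomial of $\Psi_j$, annihilates it by the two vanishing moments of $\psi_2$, and bounds the remainder by $\sup|\Psi_j''|\cdot 2^{-3(j+l)}\lesssim 2^{-3l}$, while for $l<0$ it invokes the argument of Lemma \ref{convol} with $\psi_2\in B^1_{\infty,\infty}(\re)$ to get $2^{l}$, and it transfers both single-atom bounds to $\psi^*_{j+l,k}$ exactly as you do, via the exponentially decaying dual expansion, the passage from $\chi_{A_{j+l,k+n}}$ to polynomial decay, and the $(1+|n|)^{R}$ recentering cost. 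The only point to patch is the corner case $j=0$, $l=-1$, where $\Psi_0$ carries no vanishing moment so your displayed $l<0$ identity (which subtracts $\psi_{2;j+l,m}(x)$ using $\int\Psi_j=0$) is unavailable; there the trivial estimate $\|\Psi_0\|_1\|N_2\|_\infty\lesssim 1\asymp 2^{-|l|}$ suffices, matching the paper's separate treatment of $j=0$ in (\ref{pos1}) and (\ref{neg3}).
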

\begin{proof}
	For the case $l<0$ we use the same arguments as in Lemma \ref{convol} and the fact that $\psi \in B^1_{\infty,\infty}(\re)$. This yields $|(\Psi_j\ast \psi_{j+l,k})(x)| \lesssim 2^{l}$ for $l < 0$. For $l\geq0$ we use the moment condition of $\psi$. We subtract the Taylor polynomial of order two to get
\begin{equation*}
   \begin{split}
	|(\Psi_j\ast \psi_{j+l,k})(x)| &= \Big|\int_{-\infty}^{\infty} \Psi_j(x-y)\psi_{j+l,k}(y)\,dy\Big|\\
	&= \Big|\int_{-\infty}^{\infty} (\Psi_j(x-y)-\Psi_{j}(x-x_{j+l,k})\\
	&~~~~~~~~~~~~~~~~~-(\Psi_{j})'(x-x_{j+l,k})(y-x_{j+l,k}))\psi_{j+l,k}(y)\,dy\Big|\\
	&\leq\int_{-\infty}^{\infty} \int\limits_{|t-x_{j+l,k}|\lesssim |y-x_{j+l,k}|}
	|(\Psi_j)''(t)|t-x_{j+l,k}|\,dt \,\psi_{j+l,k}(y)\,dy\\
	&\lesssim 2^{3j}2^{-3(j+l)} = 2^{-3l}\,.
   \end{split}
\end{equation*}
Inequality (\ref{conv-psi-2}) can be proven in a similar way as the inequality (\ref{conv-b-2}).
\end{proof}

\section*{Acknowledgment} The authors would like to thank Glenn Byrenheid, Ding D\~{u}ng,  Gustavo Garrig\'{o}s, Peter Oswald, Martin Sch\"{a}fer,  Andreas Seeger, Winfried Sickel and Hans Triebel for several fruitful discussions on the topic.

\end{document}